\theoremstyle{plain}
\newtheorem{thm}{Theorem}[section]
\newtheorem{cor}[thm]{Corollary}
\newtheorem{lem}[thm]{Lemma}
\newtheorem{prop}[thm]{Proposition}
\newtheorem{defn}[thm]{Definition}
\theoremstyle{definition}
\newtheorem{rem}[thm]{Remark}
\numberwithin{equation}{section}
\def\Xint#1{\mathchoice
   {\XXint\displaystyle\textstyle{#1}}%
   {\XXint\textstyle\scriptstyle{#1}}%
   {\XXint\scriptstyle\scriptscriptstyle{#1}}%
   {\XXint\scriptscriptstyle\scriptscriptstyle{#1}}%
   \!\int}
\def\XXint#1#2#3{{\setbox0=\hbox{$#1{#2#3}{\int}$}
     \vcenter{\hbox{$#2#3$}}\kern-.5\wd0}}
\def\dashint{\Xint-}
\begin{document}

\title[Quasilinear elliptic equations]
{Quasilinear elliptic equations and weighted Sobolev-Poincar\'{e} inequalities with distributional weights}

\author[B.~J. Jaye]
{Benjamin ~J. Jaye}
\address{Department of Mathematics,
Kent State University,
Kent, OH 44240, USA}
\email{bjaye@kent.edu}

\author[V.~G. Maz'ya]
{Vladimir ~G. Maz'ya}

\address{
Department of Mathematics,
Link\"oping University, SE-581 83, Link\"oping,
Sweden}
\email{vladimir.mazya@liu.se}

\author[I.~E. Verbitsky]
{Igor ~E. Verbitsky}
\address{Department of Mathematics,
University of Missouri,
Columbia, MO 65211, USA}
\email{verbitskyi@missouri.edu}

\begin{abstract}  We introduce a class of weak solutions to the quasilinear equation $-\Delta_p u = \sigma |u|^{p-2}u$ in an open set $\Omega\subset\mathbf{R}^n$ with $p>1$, where $\Delta_p u= \nabla \cdot (| \nabla u|^{p-2} \nabla u)$ is the $p$-Laplacian operator.  Our notion of solution is tailored to general distributional coefficients $\sigma$ which satisfy the inequality
\begin{equation*}\label{introeq}-\Lambda\int_{\Omega}|\nabla h|^p dx\leq \langle |h|^p, \sigma \rangle  \leq \lambda \int_{\Omega}|\nabla h|^p dx,\end{equation*}
for all $h\in C^{\infty}_0(\Omega)$.  Here $0<\Lambda< +\infty$, and
 \begin{equation*}
0<\lambda<(p-1)^{2-p} \quad \text{if} \quad p\geq 2, \quad \text{or}  \quad 0< \lambda<1 \quad
\text{if} \quad 0<p<2.
\end{equation*}
%
%

As we shall demonstrate, these conditions on $\lambda$ are natural for the existence of positive solutions, and cannot be relaxed in general.  Furthermore, our class of solutions possesses the optimal local Sobolev regularity available under such a mild restriction on $
\sigma$.

 We also study weak solutions of the closely related equation $-\Delta_p v = (p-1)|\nabla v|^p + \sigma$, under the same conditions on $\sigma$.  Our  results for this latter equation will allow us to characterize the class of $\sigma$ satisfying the above inequality for positive $\lambda$ and $\Lambda$, thereby extending earlier results on the form boundedness problem for the Schr\"odinger operator
 to $p\neq 2$.
\end{abstract}




\subjclass[2000]{Primary 35J60, 42B37. Secondary 31C45,  35J92}

\thanks{The first and third authors were supported in part
by NSF grant  DMS-0901550.}

\keywords{Quasilinear equations, weighted integral inequalities, elliptic regularity}
\maketitle

\section{Introduction}
This paper concerns a study of weak solutions to certain quasilinear elliptic equations,
and closely related integral inequalities with distributional weights.  Let $p\in (1, \infty)$ and let $\Omega\subset \mathbf{R}^n$ be an open set, with $n\geq 1$. The model equation we consider is given by
\begin{equation}\label{schro}
-\text{ div}(|\nabla u|^{p-2}\nabla u) = \sigma |u|^{p-2}u \, \, \text{ in } \Omega.
\end{equation}
Here $\sigma$ is a distribution which lies in the \text{local} dual Sobolev space $L^{-1,p'}_{\text{loc}}(\Omega)$, where $L^{-1,p'}(\Omega) = L^{1,p}_0(\Omega)^*$ is the dual of the energy space $L^{1,p}_0(\Omega)$ defined as the completion of $C^{\infty}_0(\Omega)$ in the semi-norm $||\nabla (\, \cdot\,)||_{L^p(\Omega)}$  (see Section \ref{prelim} for definitions).  The sole condition we impose on $\sigma$ is the validity of the following weighted Sobolev-Poincar\'{e} inequality:
\begin{equation}\label{pformbd}
|\langle |h|^p, \sigma \rangle | \leq C \int_{\Omega}|\nabla h|^p dx, \text{ for all } h\in C^{\infty}_0(\Omega).
\end{equation}
Here we define the pairing $\langle \cdot, \cdot \rangle$ as follows:  since $h\in C^{\infty}_0(\Omega)$, there is a bounded open set $U$ containing the support of $h$.  Then $\sigma \in L^{-1,p'}(U)$, and $\langle |h|^p, \sigma \rangle$ is the natural dual pairing of $|h|^p\in L_{0}^{1,p}(U)$ and $\sigma$.  
A more concrete realization of this pairing is described in Section \ref{notation}.

We will see that there is a two way correspondence between the inequality (\ref{pformbd}) and the existence  of positive solutions to (\ref{schro}) belonging to a certain class.  Furthermore, our class of weak solutions has the optimal local Sobolev regularity  under the condition (\ref{pformbd}).

  In the case $p=2$, the equation (\ref{schro}) reduces to the time independent Schr\"{o}dinger equation, and condition (\ref{pformbd}) becomes the \textit{form boundedness} property of the potential $\sigma$ (see \cite{RS75}, Sec. X.2).  Even in this linear framework our results are very recent; they were obtained in \cite{JMV11} where a further discussion can be found. The current paper contains a complete
   counterpart for quasilinear operators of the primary results of \cite{JMV11}. This extension is  by no means immediate, and many new ideas are required to handle the non-linear case.

In comparison with the existing literature for (\ref{schro}), the contribution of this paper is that \textit{no additional compactness conditions will be imposed on $\sigma$.}   In particular, we are interested in developing a theory of positive solutions for (\ref{schro}) under such conditions on $\sigma$ so that standard variational techniques do not appear to be applicable.  Furthermore, we do not separate the positive and negative parts of $\sigma$, and hence we will permit highly oscillating potentials,
along with strong singularities, in what follows.

The equation (\ref{schro}) has been attacked by a variety of techniques.  For instance, Smets \cite{Sme99} developed a suitable notion of concentrated compactness (building on the work of P. L. Lions) to study (\ref{schro}).  In order to carry this out, an additional hypothesis beyond (\ref{pformbd}) is imposed on $\sigma$.    A second method we mention is an adaptation of the
methods of Br\'{e}zis and Nirenberg \cite{BN83} by Br\'{e}zis, Marcus and Shafrir, see \cite{BM97,MS00}, in order to study (\ref{schro}) with Hardy-type potentials $\sigma(x) = \text{dist}(x,\partial\Omega)^{-p}$ in a bounded domain $\Omega$.   This second approach makes use of the local compactness properties of $\sigma$ in a profound way.  More recently, a generalization to quasilinear operators of the Allegretto-Piepenbrink theorem for the Schr\"{o}dinger operator has been carried out by Pinchover and Tintarev \cite{PT07, Pin07}.  For additional interesting works on the equation (\ref{schro}), see \cite{AFT04, SW99} and references therein.

These approaches show the subtleties contained in the condition (\ref{pformbd}) in general.   On one hand we do not have local compactness, and on the other hand there is no global dual Sobolev condition contained in (\ref{pformbd}).  It is known  that under the condition (\ref{pformbd}), the equation (\ref{schro}) may display some of the characteristics found in equations with critical Sobolev exponents.  This was observed by Tertikas \cite{Ter98} in the classical case $p=2$.

In this paper, we do not attempt to adapt tools developed for elliptic problems with critical exponents.  Instead, our approach hinges on obtaining quantitative information on the doubling properties of a sequence of solutions to equations which approximate (\ref{schro}).  The argument owes most to the regularity theory of elliptic equations with measure data, in particular the paper of Mingione \cite{Min07}.  We describe our method in more detail once our primary theorem is stated.

Parallel to our study of (\ref{schro}), we will consider (possibly sign changing) weak solutions of
\begin{equation}\label{ric}
-\text{div}(|\nabla v|^{p-2}\nabla v) = (p-1)|\nabla v|^p + \sigma  \text{ in } \Omega.
\end{equation}
This equation is of interest in its own right in nonlinear PDE, for instance see the paper of Ferone and Murat \cite{FM00}, and references therein.  Related problems are considered in \cite{AHBV09, Por02, MP02, GT03, ADP06, PS06}.  The critical $p$-growth in the gradient term appearing on the right hand side of (\ref{ric}) means a strong a priori bound is required to overcome weak convergence issues and prove the existence of solutions to (\ref{ric}).  In this paper, we employ a well-known connection between solutions of (\ref{ric}) and (\ref{schro}) with the aid of the substitution $v=\log(u)$, where $u$ a positive solution of (\ref{schro}).  This substitution is known to be delicate, especially when going from the equation (\ref{ric}) to the equation (\ref{schro}), see \cite{FM00}.  There are several recent works devoted to questions related to this substitution, see for example \cite{AHBV09, KKT11} and references therein.

Our second result, Theorem \ref{sobcor}, illustrates the utility of our work on the equations (\ref{schro}) and (\ref{ric}).  It regards a characterization of the Sobolev-Poincar\'{e} inequality (\ref{pformbd}).   This result is a direct extension of the $p=2$ case already studied in \cite{MV1}.  Our characterization of this inequality for $p\not= 2$, which is of substantial interest, comes as a relatively straightforward corollary of our main results for the elliptic equations.

Since it is the effect of the lower order term $\sigma|u|^{p-2}u$ on the differential operator which is of most interest here, we have introduced equations (\ref{schro}) and (\ref{ric}) with the $p$-Laplacian operator.  However, our methods extend to quasilinear operators with more general structure discussed in Section \ref{general}.

It is not obvious how one makes sense of solutions to equation (\ref{schro}) under the sole condition (\ref{pformbd}), while for  equation (\ref{ric}) the situation is more straightforward.
We make the following definition:

\begin{defn}[Weak solutions]\label{schrodef} Let $\sigma \in L^{-1, p'}_{\rm{loc}} (\Omega)$. 

(i).  A function $u$ is a \textit{weak solution} of (\ref{schro}) if both $u\in L^{1,p}_{\rm{loc}}(\Omega)$ and $|u|^{p-2}u\in L^{1,p}_{\rm{loc}}(\Omega)$, and (\ref{schro}) holds in the sense of distributions.  In other words, for any function $\varphi\in C^{\infty}_0(\Omega)$ one has
$$\int_{\Omega}|\nabla u|^{p-2}\nabla u\cdot \nabla \varphi \,dx = \langle \sigma, |u|^{p-2}u\varphi\rangle.
$$

(ii).  A function $v$ is a \textit{weak solution} of (\ref{ric}) if  $v\in L^{1,p}_{\rm{loc}}(\Omega)$, and (\ref{ric}) holds in the sense of distributions.  This means that for any $\varphi\in C^{\infty}_0(\Omega)$, one has
$$\int_{\Omega}|\nabla v|^{p-2}\nabla v\cdot \nabla \varphi \,dx = (p-1)\int_{\Omega}|\nabla v|^p \varphi dx + \langle \sigma, \varphi\rangle.
$$
\end{defn}

Using Definition \ref{schrodef}, all terms in (\ref{schro}) and (\ref{ric})  are well defined as distributions.  A function $u\in L^{1,p}_{\text{loc}}(\Omega)$ will be called positive if there exists $E\subset \Omega$ with $u(x)>0$ for all $x\in \Omega\backslash E$ and $\text{cap}_p(E,\Omega)=0$  (see (\ref{cap})).  This is not an artificial definition, as there are simple examples of $\sigma$ which should be included in our study, for which all positive weak solutions have an interior zero.

The two inequalities contained in (\ref{pformbd}) are responsible for different properties of solutions to (\ref{schro}).  We therefore separate them into an upper bound
\begin{equation}\label{1pformupper}
\langle |h|^p, \sigma \rangle  \leq \lambda \int_{\Omega} |\nabla h|^p dx, \text{ for all } h\in C^{\infty}_0(\Omega),
\end{equation}
and a lower bound
\begin{equation}\label{1pformlower}
 - \langle |h|^p, \sigma \rangle  \leq \Lambda \int_{\Omega}|\nabla h|^p dx, \text{ for all } h\in C^{\infty}_0(\Omega).
\end{equation}
There will be no smallness condition required on the parameter $\Lambda>0$ in (\ref{1pformlower}) at any point in the paper.

We are now in a position to state our main theorem.  Let $p^{\#}$ be defined by $p^{\#} = (p-1)^{2-p}$ if $p\geq 2$, and $p^{\#} = 1$ if $1<p\leq 2$.

\begin{thm}\label{introthmequ}  
The following statements hold:

(i) Suppose that $\sigma \in L^{-1,p'}_{\rm{loc}}(\Omega)$ satisfies $(\ref{1pformupper}) \text{ with }\lambda\in (0,p^{\#})$, and $(\ref{1pformlower})$ with $\Lambda>0.$  Then there exists a positive weak solution $u$ of (\ref{schro}) (see Definition \ref{schrodef}) satisfying
\begin{equation}\label{1mult1}
\int_{\Omega} \frac{|\nabla u|^p}{u^p}|h|^p dx\leq C_0\int_{\Omega}|\nabla h|^p dx,\; \text{ for all }h\in C^{\infty}_0(\Omega),
\end{equation}
for a constant $C_0=C_0(\Lambda,p)$.
Furthermore, if $v=\log(u)$, then $v\in L^{1,p}_{\rm{loc}}(\Omega)$ is a weak solution of (\ref{ric}) satisfying
\begin{equation}\label{1mult2}
\int_{\Omega} |\nabla v|^p|h|^p dx\leq C_0\int_{\Omega}|\nabla h|^p dx, \;\text{ for all }h\in C^{\infty}_0(\Omega).
\end{equation}
\indent (ii)  Conversely, if there is a solution $v\in L^{1,p}_{\rm{loc}}(\Omega)$ of (\ref{ric}) so that (\ref{1mult2}) holds for a constant $C_0$, then the inequality (\ref{1pformupper}) holds with $\lambda=1$ and (\ref{1pformlower}) holds for a constant $\Lambda = \Lambda(C_0)$.
\end{thm}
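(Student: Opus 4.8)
My plan is to prove the two implications separately, starting with the converse (ii): its computation is short, and run in reverse it is exactly the a priori estimate that drives (i). \emph{Converse direction (ii).} The idea is to insert the test function $\varphi=|h|^{p}$, $h\in C^{\infty}_{0}(\Omega)$, into the weak formulation of (\ref{ric}) in Definition~\ref{schrodef}(ii). Since $|h|^{p}$ is Lipschitz with compact support and, by (\ref{1mult2}), $\int_{\Omega}|\nabla v|^{p}|h|^{p}\,dx<\infty$, a standard truncation and density argument justifies this choice. Writing $I=\int_{\Omega}|\nabla v|^{p}|h|^{p}\,dx$, $J=\int_{\Omega}|\nabla h|^{p}\,dx$, and using $\nabla(|h|^{p})=p|h|^{p-2}h\,\nabla h$ with H\"older's and Young's inequalities, one gets $\bigl|\int_{\Omega}|\nabla v|^{p-2}\nabla v\cdot\nabla(|h|^{p})\,dx\bigr|\le p\,I^{(p-1)/p}J^{1/p}\le (p-1)I+J$. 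Substituting into the identity $\langle\sigma,|h|^{p}\rangle=\int_{\Omega}|\nabla v|^{p-2}\nabla v\cdot\nabla(|h|^{p})\,dx-(p-1)I$ yields (\ref{1pformupper}) with $\lambda=1$, while applying the same estimate to $-\langle\sigma,|h|^{p}\rangle=(p-1)I-\int_{\Omega}|\nabla v|^{p-2}\nabla v\cdot\nabla(|h|^{p})\,dx\le 2(p-1)I+J\le(2(p-1)C_{0}+1)J$ yields (\ref{1pformlower}) with $\Lambda=\Lambda(C_{0},p)$. Reading the same identity as $(p-1)I\le p\,I^{(p-1)/p}J^{1/p}+\Lambda J$ (using the lower bound on $\sigma$) and dividing by $I$ forces $I\le C_{0}(\Lambda,p)\,J$; this is the uniform bound behind (\ref{1mult1})--(\ref{1mult2}) and the reason the constant there depends only on $\Lambda$ and $p$.

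\emph{Forward direction (i): approximation.} I would set up an approximation scheme: exhaust $\Omega$ by bounded smooth domains $\Omega_{k}\Subset\Omega$, and on each $\Omega_{k}$ replace $\sigma$ by smooth bounded $\sigma_{k}$ (mollification and truncation) retaining the two-sided bound on $\Omega_{k}$ with constants $\lambda'<p^{\#}$ and $\Lambda'$ independent of $k$. On $\Omega_{k}$ one solves the regularized Riccati problem $-\Delta_{p}v_{k}=(p-1)|\nabla v_{k}|^{p}+\sigma_{k}$ with zero boundary data: the hypothesis $\lambda'<p^{\#}$ supplies precisely the smallness required to run the substitution-and-truncation method for equations with critical gradient growth (cf.\ \cite{FM00}), and elliptic regularity for smooth data makes $v_{k}$ smooth up to $\partial\Omega_{k}$. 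Set $u_{k}=e^{v_{k}}>0$, so $-\Delta_{p}u_{k}=\sigma_{k}u_{k}^{p-1}$ in $\Omega_{k}$ with $u_{k}=1$ on $\partial\Omega_{k}$. Because $v_{k}$ is smooth, the computation above applies verbatim on $\Omega_{k}$ and gives, uniformly in $k$, $\int_{\Omega}\bigl(|\nabla u_{k}|/u_{k}\bigr)^{p}|h|^{p}\,dx=\int_{\Omega}|\nabla v_{k}|^{p}|h|^{p}\,dx\le C_{0}\int_{\Omega}|\nabla h|^{p}\,dx$ for all $h\in C^{\infty}_{0}(\Omega)$ supported in $\Omega_{k}$, with $C_{0}=C_{0}(\Lambda',p)$.

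\emph{Forward direction (i): passing to the limit.} Since the source $\sigma_{k}u_{k}^{p-1}$ is only measure-like, I would combine the uniform weighted bound just obtained with the local regularity theory for the $p$-Laplacian with measure data (in the spirit of \cite{Min07}) to extract uniform local gradient estimates and, crucially, uniform \emph{doubling-type} estimates for the $u_{k}$ on balls $B\Subset\Omega$. After a normalization ($u_{k}(x_{0})=1$ at a fixed point), these prevent $u_{k}$ from degenerating to $0$ or escaping to $+\infty$ on compact sets, so along a subsequence $u_{k}\to u$ locally uniformly with $0<c_{K}\le u\le C_{K}<\infty$ on each compact $K$, and $\nabla u_{k}\to\nabla u$ a.e.; in particular $u,\,u^{p-1}\in L^{1,p}_{\mathrm{loc}}(\Omega)$. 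One then passes to the limit in the weak formulation of $-\Delta_{p}u_{k}=\sigma_{k}u_{k}^{p-1}$ to conclude that $u$ is a positive weak solution of (\ref{schro}); (\ref{1mult1}) follows from the uniform estimate by Fatou's lemma; $v=\log u$ belongs to $L^{\infty}_{\mathrm{loc}}\subset L^{p}_{\mathrm{loc}}$ and, since $\nabla v=\nabla u/u$, to $L^{1,p}_{\mathrm{loc}}(\Omega)$, with (\ref{1mult2}) being a restatement of (\ref{1mult1}); and the chain rule for the $p$-Laplacian, legitimate because $u$ is locally bounded away from $0$ and $\infty$, turns (\ref{schro}) into (\ref{ric}).

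\emph{Main obstacle.} The a priori estimate and the limiting argument are comparatively soft once the approximants are available; the heart of the matter is the construction of the $u_{k}$ and, above all, the \emph{uniform} lower control that keeps the limit positive and nontrivial. This is exactly where the sharp threshold $\lambda<p^{\#}$ is indispensable, and where one cannot invoke a classical Harnack inequality — $\sigma$ need not lie in any Morrey-type class — but must instead develop quantitative doubling estimates of measure-data type for the approximating solutions.
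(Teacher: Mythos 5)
Your part (ii) is correct and is essentially the paper's argument (test the weak formulation of (\ref{ric}) with $|h|^p$ and use Young's inequality), and your derivation of the bound $\int|\nabla v|^p|h|^p\,dx\le C(\Lambda,p)\int|\nabla h|^p\,dx$ from the lower form bound is exactly the mechanism behind Lemma \ref{est3}. Part (i), however, has genuine gaps. First, the construction of the approximants: you propose to solve the regularized Riccati problem on $\Omega_k$ by the method of \cite{FM00}, asserting that $\lambda'<p^{\#}$ ``supplies precisely the smallness required''; but the smallness in \cite{FM00} is smallness of the datum in the dual norm $L^{-1,p'}$, which the form bound (\ref{pformbd}) does not provide (form boundedness is strictly weaker, and that is the whole point of the theorem). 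The workable construction goes the other way: solve $-\mathrm{div}(|\nabla u_k|^{p-2}\nabla u_k)=\sigma_k u_k^{p-1}$ by monotone operator theory (coercivity comes from the upper form bound with $\lambda<1$, as in Lemma \ref{existencelem}) and only afterwards pass to $v_k=\log u_k$. Second, and more seriously, the only uniform estimate you establish is the logarithmic-gradient bound $\int(|\nabla u_k|/u_k)^p|h|^p\,dx\le C_0\int|\nabla h|^p\,dx$. This controls $\log u_k$ in $BMO$ but gives no local $L^p$ bounds on $u_k$, $\nabla u_k$, $u_k^{p-1}$, $\nabla(u_k^{p-1})$ — bounds that are indispensable both to make sense of the limit (Definition \ref{schrodef} requires $u,\,u^{p-1}\in L^{1,p}_{\mathrm{loc}}$) and to pass to the limit in $\langle\sigma_k,u_k^{p-1}\varphi\rangle$. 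Obtaining them is where the sharp hypothesis $\lambda<p^{\#}$ actually enters (the Caccioppoli estimate for $u_k^{p-1}$, Lemma \ref{est2}), combined with a weak reverse H\"older inequality, the $BMO$ bound, Proposition \ref{locdoub}, a Harnack chain, and the normalization on a fixed ball; your appeal to measure-data regularity ``in the spirit of \cite{Min07}'' cannot replace this, since $\sigma_k u_k^{p-1}$ is not a measure with uniformly bounded mass, and your outline never explains where $\lambda<p^{\#}$ is used quantitatively.

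Third, the limiting picture you describe is too strong to be true: under (\ref{pformbd}) alone the approximants and the limit need not be locally bounded, and the paper points out that positive weak solutions may vanish at interior points, so locally uniform convergence with two-sided bounds $0<c_K\le u\le C_K$ on compacts is not available; what survives is only the doubling of suitable powers of $u$, hence $u>0$ a.e. Consequently your assertion that $\nabla u_k\to\nabla u$ a.e.\ (which you derive from the unavailable uniform convergence) is unsupported, and this is in fact the hardest step of the proof: the convergence in measure of the gradients (Lemma \ref{convmeas}), which requires specially chosen truncated test functions $f=(\mu-(\tilde u_j-\tilde u_k)_+)_+$, $g=(2A-\max(\tilde u_j,\tilde u_k))_+$, the monotonicity of the operator, and the local representation $\sigma=\mathrm{div}\,\vec T$. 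Finally, the last step from (\ref{schro}) to (\ref{ric}) cannot invoke the chain rule ``because $u$ is locally bounded away from $0$ and $\infty$''; it must be done with the regularized test function $h(u+\varepsilon)^{1-p}$ and the bound (\ref{1mult1}), as in Lemma \ref{logsublin}.
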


In part (i) of Theorem \ref{introthmequ}, the local regularity of the solution to (\ref{schro}) is optimal (i.e. cannot be replaced by $L^{1,q}_{\text{loc}}(\Omega)$ for any $q>p$).  This is the case even when $p=2$, see \cite{JMV11}.

\begin{rem}\label{nonlinregrem}  The condition $0<\lambda<p^{\#}$ is sharp in order to obtain a solution of (\ref{schro}) in the sense of Definition \ref{schrodef}.  If $\Omega = \mathbf{R}^n$, this can be seen from working with the potential
$$\sigma  = t\cdot c_0 |x|^{-p}, \text{ for } c_0= \Bigl(\frac{n-p}{p}\Bigl)^p, \text{ and }0<t\leq1.$$
If $t=1$, then (\ref{1pformupper}) holds with best constant $\lambda = 1$ by the classical multidimensional variant of Hardy's inequality.  An elementary calculation shows that the equation (\ref{schro}) has a positive solution $u(x) = |x|^{\gamma}$, with $\gamma = \gamma(t,n,p)$, for all $t\in (0,1]$.

If $p\geq 2$ and $t = (p-1)^{2-p}$, we may choose
$\gamma  =  \frac{p-n}{p(p-1)}$,  in which case, the solution $u(x) = |x|^{\gamma}$ is the unique (up to constant multiple) positive solution of (\ref{schro}) in $L^{1,p}_{\text{loc}}(\mathbf{R}^n)$  (see \cite{Pol03,PS05}).  Notice that $u^{p-1}\not\in L^{1,p}_{\text{loc}}(\mathbf{R}^d)$, and hence $u$ is not a solution in the sense of Definition \ref{schrodef}.   For all $p>1$ and $t=1$, we have $ \gamma  = \frac{p-n}{p},$  and the resulting solution $u$ is the unique (up to constant multiple) positive distributional solution of (\ref{schro}) (see \cite{PS05}).  Note that $|x|^{(p-n)/p}$ does not lie in $L^{1,p}_{\text{loc}}(\Omega)$, and therefore the assumption that $\lambda<1$ in Theorem \ref{introthmequ} cannot be relaxed.

Notice that the above example also shows that one cannot expect global integrability properties of solutions of (\ref{schro}) (at least in unweighted Sobolev spaces). 
\end{rem}

The solution of (\ref{schro}) obtained in Theorem \ref{introthmequ} may possess improved integrability properties if one has better control of the parameter $\lambda>0$.   This follows from a slight modification of the method of Br\'{e}zis and Kato \cite{BK79}, and is carried out in Section \ref{highintrem}.

The heart of the proof of Theorem \ref{introthmequ} lies in proving part (i).  Here the proof breaks off into two parts.  The first part consists of establishing local $L^p$-estimates on the gradient of a suitable approximating sequence.  This follows a similar path to the linear case $p=2$ previously presented in \cite{JMV11}, where doubling properties are used in order to compensate for a lack of compactness.  The second part of the proof concerns the passage to the limit, where there are significant hurdles.  We follow the general scheme spelled out in the important papers \cite{BBGVP, DMMOP} in reducing matters to certain convergence in measure properties.  However, the proof of the required convergence in measure will be quite non-trivial on the basis of the distributional nature of $\sigma$, and several judicious choices of test functions will be required.



\subsection{A characterization of the inequality (\ref{pformbd})}We shall now state our characterization of the Sobolev-Poincar\'{e} inequality (\ref{pformbd}).  
We focus on the case when $\Omega = \mathbf{R}^n$.  From this case, one can deduce a similar characterization of (\ref{pformbd}) for bounded domains $\Omega$ which support a Hardy inequality of the following form: There exists $C>0$ such that
$$\int_{\Omega}\frac{|h(x)|^p}{\text{dist}(x, \partial\Omega)^p} dx\leq C\int_{\Omega}|\nabla h|^p dx, \text{ for all }h\in C^{\infty}_0(\Omega).
$$
This reduction is spelled out in \cite{MV1}.  Furthermore, we will consider only $n\geq 2$, since the one dimensional case was previously studied in \cite{MV2}.  

\begin{thm} \label{sobcor} Let $n\geq 2$, and suppose $\sigma \in L^{-1,p'}_{\rm{loc}}(\Omega)$.  For a constant $C_0>0$, the inequality
\begin{equation}\label{pformbdentire}
|\langle |h|^p, \sigma \rangle | \leq C_0 \int_{\mathbf{R}^n} |\nabla h|^p dx, \text{ for all } h\in C^{\infty}_0(\mathbf{R}^n),
\end{equation}
holds if and only if

(i)  $1<p<n$, and there exists $C_1 = C_1(C_0, n, p)$ and $\vec\Gamma\in (L^{p'}_{\rm{loc}}(\mathbf{R}^n))^n$ such that one can represent $\sigma = {\rm div}(\vec\Gamma)$, with $\vec\Gamma$ satisfying
\begin{equation}\label{positiveineq}
\int_{\mathbf{R}^n}|h|^p |\vec\Gamma|^{p'}dx \leq C_1\int_{\mathbf{R}^n} |\nabla h|^p dx \text{ for all }h\in C^{\infty}_0(\mathbf{R}^n).
\end{equation}
\indent (ii)   $p\geq n$,  and $\sigma \equiv0$.
\end{thm}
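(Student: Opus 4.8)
The plan is to deduce Theorem~\ref{sobcor} as a corollary of Theorem~\ref{introthmequ} together with the classical theory of form-bounded potentials. I will treat the two directions separately, and within the forward direction the two cases $1<p<n$ and $p \ge n$.

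\medskip

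\textbf{The easy direction.} Suppose (i) or (ii) holds. In case (ii), $\sigma \equiv 0$ and \eqref{pformbdentire} is trivial. In case (i), given $h \in C^\infty_0(\mathbf R^n)$, integration by parts and $\sigma = \operatorname{div}(\vec\Gamma)$ give $\langle |h|^p, \sigma\rangle = -\int_{\mathbf R^n} \vec\Gamma \cdot \nabla(|h|^p)\,dx = -p\int_{\mathbf R^n} |h|^{p-2} h\, \vec\Gamma \cdot \nabla h\, dx$. By Cauchy--Schwarz and Young's inequality, $|\langle |h|^p,\sigma\rangle| \le p\int |h|^{p-1}|\vec\Gamma||\nabla h|\,dx \le (p-1)\varepsilon^{p'}\int |h|^p|\vec\Gamma|^{p'}dx + \varepsilon^{-p}\int|\nabla h|^p dx$ for any $\varepsilon>0$; invoking \eqref{positiveineq} and optimizing in $\varepsilon$ yields \eqref{pformbdentire} with $C_0$ depending only on $C_1,p$. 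So this direction is a routine computation.

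\medskip

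\textbf{The hard direction: $1<p<n$.} Assume \eqref{pformbdentire}, i.e. \eqref{1pformupper} and \eqref{1pformlower} hold on $\Omega = \mathbf R^n$ with some finite constant. First I would reduce the upper constant: since \eqref{1pformupper} holds with \emph{some} $\lambda$, replacing $\sigma$ by $\delta\sigma$ for small $\delta>0$ makes the upper constant as small as we like, in particular below $p^\# $; and $\sigma = \operatorname{div}(\vec\Gamma)$ with $\vec\Gamma$ satisfying \eqref{positiveineq} if and only if $\delta\sigma = \operatorname{div}(\delta\vec\Gamma)$ with $\delta\vec\Gamma$ satisfying the analogous inequality, so it suffices to treat $\sigma$ with arbitrarily small $\lambda$. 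Now apply Theorem~\ref{introthmequ}(i): there is a positive weak solution $v=\log u$ of \eqref{ric} on $\mathbf R^n$ with $\int_{\mathbf R^n}|\nabla v|^p|h|^p dx \le C_0\int|\nabla h|^p dx$. Rewriting \eqref{ric}, $\sigma = -\operatorname{div}(|\nabla v|^{p-2}\nabla v) - (p-1)|\nabla v|^p$. The key point is to absorb the zeroth-order term $(p-1)|\nabla v|^p$ into a divergence using the same vector field. One checks, on the level of distributions, that $|\nabla v|^p = \operatorname{div}(v\,|\nabla v|^{p-2}\nabla v) - v\,\operatorname{div}(|\nabla v|^{p-2}\nabla v)$; combined with \eqref{ric} this gives, after collecting terms, a representation $\sigma = \operatorname{div}(\vec\Gamma)$ with $\vec\Gamma$ built from $|\nabla v|^{p-2}\nabla v$ and $v|\nabla v|^{p-2}\nabla v$. (The precise algebra is the point where \eqref{ric} is used; this is essentially the substitution already discussed around \eqref{ric}, and the clean identity is $\operatorname{div}\big((1-\tfrac{v}{?})|\nabla v|^{p-2}\nabla v\big)$ — I would carry the computation through carefully to pin down the constant.) The resulting $\vec\Gamma$ satisfies $|\vec\Gamma| \le C(1+|v|)|\nabla v|^{p-1}$, so $|\vec\Gamma|^{p'} \le C(1+|v|)^{p'}|\nabla v|^p$; to get \eqref{positiveineq} I need $\int (1+|v|)^{p'}|\nabla v|^p|h|^p dx \le C_1\int|\nabla h|^p dx$. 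The inequality $\int|\nabla v|^p|h|^p\,dx\le C_0\int|\nabla h|^p dx$ handles the constant part, but the factor $(1+|v|)^{p'}$ is a genuine obstacle — I expect this to be \emph{the main difficulty}. It should be resolved exactly as in the $p=2$ case of \cite{MV1}: one uses that $v$ itself has a pointwise bound of logarithmic type coming from the Hardy-type estimate $\int|\nabla v|^p|h|^p\le C_0\int|\nabla h|^p$, which forces $|v(x)| \le C\log(2+|x|) + C$ (or more precisely controls $|v|$ by the relevant capacity/Wolff-potential quantity), and this slowly growing weight can be absorbed into a Hardy inequality on $\mathbf R^n$ for $1<p<n$. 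Finally, $\vec\Gamma\in (L^{p'}_{\mathrm{loc}})^n$ follows from $v\in L^{1,p}_{\mathrm{loc}}$ and the pointwise bound on $v$.

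\medskip

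\textbf{The hard direction: $p\ge n$.} Here I would argue that \eqref{1pformlower}, i.e. $-\langle|h|^p,\sigma\rangle \le \Lambda\int|\nabla h|^p dx$ for all $h\in C^\infty_0(\mathbf R^n)$, forces $\sigma \equiv 0$ when $p\ge n$. The mechanism is that for $p\ge n$ the space $L^{1,p}_0(\mathbf R^n)$ is trivial in the relevant sense: there is no nonzero $\sigma\in L^{-1,p'}_{\mathrm{loc}}$ that is form bounded from below on all of $\mathbf R^n$, because one can test \eqref{pformbdentire} with functions $h$ that are $\equiv 1$ on arbitrarily large balls while keeping $\int|\nabla h|^p$ bounded (for $p>n$ take $h$ a fixed bump dilated; for $p=n$ use the standard logarithmic cutoffs $h_R$ with $\int|\nabla h_R|^n \to 0$). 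If $\sigma$ were nonzero, pick $\varphi\in C^\infty_0$ with $\langle\sigma,\varphi\rangle \ne 0$; then for $h$ supported where it agrees with a suitable multiple of (a sign-adjusted) approximation, the left side of \eqref{pformbdentire} does not go to zero while the right side does — a contradiction. I would formalize this with the capacitary characterization of such cutoffs; in dimension $n\ge 2$ this is standard. Then from $\sigma\equiv 0$, conclusion (ii) follows, and consistency with case (i) is not needed since the cases are mutually exclusive. I expect this $p\ge n$ argument to be short compared to the $1<p<n$ case, and to mirror exactly the corresponding step in \cite{MV1}.
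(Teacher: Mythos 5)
Your sufficiency argument and your treatment of $p\ge n$ are in the right spirit (the paper handles $p\ge n$ differently: it applies Theorem \ref{introthmequ}, notes that \eqref{nablavprop} gives $\int_E|\nabla v|^p dx\le C\,\mathrm{cap}_p(E)$, and uses that $\mathrm{cap}_p(E)=0$ for all compact $E$ when $p\ge n$, whence $\nabla v\equiv 0$ and $\sigma\equiv 0$; your cutoff argument can be made rigorous but is only sketched). The genuine gap is in the core case $1<p<n$. The algebraic identity you propose does not close: writing $|\nabla v|^p=\mathrm{div}(v|\nabla v|^{p-2}\nabla v)-v\,\mathrm{div}(|\nabla v|^{p-2}\nabla v)$ and substituting \eqref{ric} reintroduces the non-divergence terms $v|\nabla v|^p$ and $v\sigma$; more generally, trying $\vec\Gamma=\phi(v)|\nabla v|^{p-2}\nabla v$ forces $\phi(v)=-e^{(p-1)v}$, which simply returns you to the original equation for $u=e^v$, so no local expression in $v$ and $\nabla v$ of the kind you describe represents $\sigma$ as a divergence with $|\vec\Gamma|\lesssim(1+|v|)|\nabla v|^{p-1}$. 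Your proposed patch is also doubly problematic: (a) no pointwise bound $|v(x)|\le C\log(2+|x|)+C$ follows from \eqref{nablavprop} — $v=\log u$ is in general locally unbounded under the hypotheses (it is only of $BMO$/$L^{1,p}_{\rm loc}$ type); and (b) even granting such a bound, the inequality $\int(1+|v|)^{p'}|\nabla v|^p|h|^pdx\le C\int|\nabla h|^pdx$ does not follow from \eqref{nablavprop}: trace inequalities are not stable under multiplication of the measure by an unbounded, even logarithmically growing, weight (compare the Hardy weight $|x|^{-p}$, which satisfies the trace inequality for $1<p<n$, with $|x|^{-p}(\log|x|)^{p'}$, which does not).

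The missing idea — and the actual content of the paper's proof — is potential-theoretic rather than algebraic. After rescaling $\sigma$ so that Theorem \ref{introthmequ} applies, one sets $d\mu=|\nabla v|^p dx$, which satisfies the trace inequality by \eqref{nablavprop}, and invokes the capacitary inequality of \cite{MV95}: $\int_E(\mathbf{I}_1\mu)^{p'}dx\le C\,\mathrm{cap}_p(E)$ for all compact $E$. One then builds $w$ solving $-\Delta w=c\,\mu$ (as a limit of Newtonian potentials of truncated measures, see \eqref{wexist}), so that the troublesome zeroth-order term becomes a divergence, $c\,|\nabla v|^p=-\mathrm{div}(\nabla w)$, and the bound $|\nabla\Delta^{-1}\mu_N|\le c\,\mathbf{I}_1(\mu_N)$ transfers the capacitary estimate to $\nabla w$, giving \eqref{wgradest}, which by Maz'ya's capacitary characterization is equivalent to \eqref{nablawprop}. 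Then $\vec\Gamma=-c\,|\nabla v|^{p-2}\nabla v+\nabla w$ satisfies $\sigma=\mathrm{div}(\vec\Gamma)$ and \eqref{positiveineq}, since $|\vec\Gamma|^{p'}\lesssim|\nabla v|^p+|\nabla w|^{p'}$ and each piece obeys the weighted inequality. In other words, the resolution in \cite{MV1} and here is via the linear potential $w=\Delta^{-1}(|\nabla v|^p)$ and the Riesz-potential/capacity machinery, not via a pointwise bound on $v$; without this step your argument cannot produce the required $\vec\Gamma$.
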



The strength of Theorem \ref{pformbdentire} lies in recasting the inequality (\ref{pformbdentire}) with indefinite weight $\sigma$, in terms of the inequality (\ref{positiveineq}) with positive weight $|\vec \Gamma|^{p'}$.  The inequality (\ref{positiveineq}) has a rich history in its own right, and can be recast in terms of a capacity condition (see \cite{Ad09},  Chapter 2 of \cite{Maz85}, or Chapter 7 of \cite{AH96}).  Combining this result with Theorem \ref{sobcor}, we arrive at the following corollary.

\begin{cor}Let $p\in (1,n)$.  Then (\ref{pformbdentire}) holds if and only if there exists $C_1=C_1(C_0,n,p)$ and $\vec{\Gamma}\in (L^{p'}_{\rm{loc}}(\mathbf{R}^n))^n$, such that
$\sigma  = {\rm{div}} ( \vec{\Gamma} )$, with $\vec{\Gamma}$ satisfying
\begin{equation}\label{characcapcond}
\int_{E} |\vec\Gamma|^{p'} dx \leq C\rm{cap}_p(E) \text{ for all compact sets }E\subset\mathbf{R}^n.
\end{equation}
\end{cor}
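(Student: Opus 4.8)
The plan is to deduce this corollary by simply feeding Theorem~\ref{sobcor}(i) into the classical capacitary description of the trace (Sobolev--Poincar\'e) inequality for \emph{nonnegative} weights. Since $p\in(1,n)$, Theorem~\ref{sobcor} already asserts that (\ref{pformbdentire}) is equivalent to the existence of $\vec\Gamma\in(L^{p'}_{\rm{loc}}(\mathbf{R}^n))^n$ with $\sigma={\rm div}(\vec\Gamma)$ and
$$
\int_{\mathbf{R}^n}|h|^p\,|\vec\Gamma|^{p'}\,dx\leq C_1\int_{\mathbf{R}^n}|\nabla h|^p\,dx,\qquad h\in C^{\infty}_0(\mathbf{R}^n).
$$
Thus the only thing left is to show that, for the locally finite nonnegative Borel measure $d\mu=|\vec\Gamma|^{p'}\,dx$ (well defined since $\vec\Gamma\in(L^{p'}_{\rm{loc}}(\mathbf{R}^n))^n$), the inequality $\int_{\mathbf{R}^n}|h|^p\,d\mu\leq C_1\int_{\mathbf{R}^n}|\nabla h|^p\,dx$ for all $h\in C^{\infty}_0(\mathbf{R}^n)$ is equivalent, up to the value of the constant, to the capacity condition (\ref{characcapcond}), i.e. $\mu(E)\leq C\,{\rm cap}_p(E)$ for all compact $E\subset\mathbf{R}^n$. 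This is exactly the statement recorded in \cite{Ad09}, Chapter~2 of \cite{Maz85}, and Chapter~7 of \cite{AH96}, and it is what we invoke; combining it with Theorem~\ref{sobcor}(i) gives the corollary.

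For completeness I would recall the two halves of this equivalence. \emph{Necessity of (\ref{characcapcond})}: given a compact $E$ and $\varepsilon>0$, pick $h\in C^{\infty}_0(\mathbf{R}^n)$ with $h\geq 1$ on a neighborhood of $E$ and $\int_{\mathbf{R}^n}|\nabla h|^p\,dx\leq {\rm cap}_p(E)+\varepsilon$; then $\mu(E)\leq\int_{\mathbf{R}^n}|h|^p\,d\mu\leq C_1({\rm cap}_p(E)+\varepsilon)$, and $\varepsilon\downarrow 0$ yields (\ref{characcapcond}) with $C=C_1$. \emph{Sufficiency}: this is Maz'ya's capacitary strong-type estimate — decomposing over the superlevel sets $\{|h|>2^k\}$, using the capacitary inequality $\sum_k 2^{kp}\,{\rm cap}_p(\{|h|>2^{k+1}\})\leq A_{n,p}\int_{\mathbf{R}^n}|\nabla h|^p\,dx$ together with $\mu(\{|h|>2^{k+1}\})\leq C\,{\rm cap}_p(\{|h|>2^{k+1}\})$, and summing — which produces $\int_{\mathbf{R}^n}|h|^p\,d\mu\leq C'\int_{\mathbf{R}^n}|\nabla h|^p\,dx$ with $C'=C'(C,n,p)$. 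One then passes back to $\sigma$ as in the proof of Theorem~\ref{sobcor}: for $h\in C^{\infty}_0(\mathbf{R}^n)$,
$$
|\langle |h|^p,\sigma\rangle|=\Bigl|\int_{\mathbf{R}^n}\vec\Gamma\cdot\nabla(|h|^p)\,dx\Bigr|\leq p\int_{\mathbf{R}^n}|h|^{p-1}|\nabla h|\,|\vec\Gamma|\,dx\leq p\Bigl(\int_{\mathbf{R}^n}|\nabla h|^p\Bigr)^{1/p}\Bigl(\int_{\mathbf{R}^n}|h|^p|\vec\Gamma|^{p'}\Bigr)^{1/p'},
$$
which gives (\ref{pformbdentire}).

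I do not expect a genuine obstacle here: the corollary is a bookkeeping combination of Theorem~\ref{sobcor}(i) with a classical theorem, and the constants are allowed to change at each step. The only points that deserve a word of care are that the hypothesis $1<p<n$ (with $n\geq 2$) is what makes ${\rm cap}_p$ on $\mathbf{R}^n$ nondegenerate and finite on compact sets — precisely the regime in which Theorem~\ref{sobcor}(i) applies — and that one should phrase everything in terms of the measure $\mu=|\vec\Gamma|^{p'}\,dx$ so that the cited capacitary criterion is applicable verbatim.
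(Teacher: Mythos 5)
Your proposal is correct and follows exactly the route the paper itself takes: the corollary is obtained by combining Theorem \ref{sobcor}(i) with the classical capacitary characterization (Maz'ya's theorem, cited via \cite{Ad09}, \cite{Maz85}, \cite{AH96}) of the trace inequality (\ref{positiveineq}) for the nonnegative measure $d\mu=|\vec\Gamma|^{p'}dx$. Your sketch of the two halves of that equivalence and of the H\"older step returning from $\vec\Gamma$ to $\sigma$ is accurate and matches the paper's (essentially citation-level) argument.
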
Here $\text{cap}_p(E) = \text{cap}_p(E,\mathbf{R}^n)$ is the capacity associated with the homogeneous Sobolev space $L^{1,p}(\mathbf{R}^n)$.  For a general open set $\Omega\subset \mathbf{R}^n$, and a compact set $E\subset \Omega$, we define
\begin{equation}\label{cap}
\text{cap}_p(E, \Omega) = \inf\{||\nabla h||_{L^p(\Omega)}^p \, :\, h\geq 1 \text{ on }E, \, \, h\in C^{\infty}_0(\Omega)\}.
\end{equation}

   Several  conditions equivalent  to (\ref{characcapcond}) (or (\ref{positiveineq})) which do not involve capacities
 are known (see for example \cite{Maz85}, Sec. 1.2.5, and \cite{V}).

\subsection{The plan of the paper}The plan of the paper is as follows.  In Section \ref{general} we formulate our main theorem in the framework of a more general quasilinear operator.  Then in Section \ref{prelim} we develop the required preliminaries.  Section \ref{equthmsec} is the heart of the paper, and Theorem \ref{introthmequ} is proved there.  In Section \ref{highintrem}, we remark on additional integrability properties for solutions of (\ref{schro}).  Finally, Section \ref{sobthmsec} is devoted to deducing Theorem \ref{sobcor} from Theorem \ref{introthmequ}.

\section{The main result for the general operator}\label{general}

Since our techniques do not use the particular structure of the $p$-Laplacian operator, we state a version of Theorem \ref{introthmequ} for more general operators.  In a couple of places in our argument, we will sacrifice generality for ease of exposition, but in such instances we will indicate how the argument can be extended.

For an open set $\Omega\subset \mathbf{R}^n$, let $\mathcal{A}:\Omega \times \mathbf{R}^n \rightarrow \mathbf{R}$ be measurable in the first variable for each $\xi\in \mathbf{R}^n$, and continuous in the second variable for almost every $x\in \Omega$.  In addition, suppose $\mathcal{A}$ satisfies the following conditions:
\begin{enumerate}
\item (Ellipticity and boundedness) There exists $0<m\leq M$ so that for almost every $x\in \Omega$ and for all $\xi \in \mathbf{R}^n$,
\begin{equation}\label{ellip}
\mathcal{A}(x, \xi)\cdot \xi \geq m|\xi|^p, \text{ and } \, \, |\mathcal{A}(x, \xi)| \leq M|\xi|^{p-1}.
\end{equation}
\item (Homogeneity) For all $\xi\in\mathbf{R}^n$, and almost every $x\in \Omega$,
\begin{equation}\label{homog}
\mathcal{A}(x,t\xi) = |t|^{p-2}t\mathcal{A}(x,\xi), \text{ for any }t\in \mathbf{R}.
\end{equation}
\item (Monotonicity)  There exists a constant $c>0$ such that, for almost every $x\in \Omega$, and for all $\xi$, $\eta \in \mathbf{R}^n\setminus\{0\}$,
\begin{equation}\label{monotone}
(\mathcal{A}(x, \xi) - \mathcal{A}(x, \eta))\cdot (\xi -\eta)\geq c\begin{cases}
|\xi - \eta|^p, \text{ if }p\geq 2,\\
\displaystyle \frac{|\xi-\eta|^2}{(|\xi|^{2-p}+ |\eta|^{2-p})} \text{ if }1<p\leq 2.
\end{cases}
\end{equation}
\item (Continuity) There exists a modulus of continuity $\omega : [0,\infty) \rightarrow [0,\infty)$ so that $\lim_{\varepsilon \rightarrow 0^+} \omega(\varepsilon) = 0$, and
\begin{equation}\label{cont}
|\mathcal{A}(x,\xi) - \mathcal{A}(y,\xi)| \leq \omega(|x-y|) |\xi|^{p-1}.
\end{equation}
\item (Convexity) For almost every $x\in \Omega$, the function
\begin{equation}\label{conv}
\xi \rightarrow \mathcal{A}(x,\xi)\cdot \xi \text{ is convex on }\mathbf{R}^n.
\end{equation}
\end{enumerate}
These assumptions will be in force for the remainder of this paper, unless stated otherwise.  

\begin{rem} The convexity assumption here is natural for our problem.  Indeed, in our more general version of the Sobolev-Poincar\'{e} inequality (\ref{pformbd}) (see (\ref{gensob}) below), the convexity condition guarantees the right hand side (when raised to the power $1/p$) is a semi-norm.  In the linear case $p=2$, and $\mathcal{A}(\cdot, \xi) = a_{ij}(\cdot)\xi_i$, it is routine to check that convexity is a consequence of ellipticity of the matrix $(a_{ij})$.  One can obtain existence results without the convexity assumption if one permits a smaller constant in the inequality (\ref{pformupper}) below (see Remark \ref{noconv}).

It seems likely that the continuity assumption on the operator (condition (\ref{cont})) can be weakened.  This assumption is used to obtain a certain convergence of measure result (carried out in Section \ref{convmeassec}).  Since this convergence result is quite technical, we decided not to complicate matters by introducing a more refined regularity assumption on the operator.  In the linear case $p=2$, there is no need for any continuity assumption, see \cite{JMV11}.
\end{rem}

With the conditions on our operator stated, we are now in a position to state our main result.  We will consider solutions of the following equations, which are the natural generalizations of (\ref{schro}) and (\ref{ric}) respectively:
\begin{equation}\label{genschro}
-\text{div}(\mathcal{A}(x,\nabla u)) = \sigma |u|^{p-2}u \text{ in }\Omega,
\end{equation}
and,
\begin{equation}\label{genric}
-\text{div}(\mathcal{A}(x,\nabla v)) = (p-1)\mathcal{A}(x,\nabla v)\cdot\nabla v + \sigma \text{ in }\Omega.
\end{equation}
We will consider solutions of (\ref{genschro}) as in Definition \ref{schrodef}, with (\ref{schro}) replaced by (\ref{genschro}).  The more general variant of the Sobolev-Poincar\'{e} inequality (\ref{pformbd}) in this context is
\begin{equation}\label{gensob}|\langle \sigma, |h|^p \rangle| \leq C\int_{\Omega} \mathcal{A}(x,\nabla h)\cdot \nabla h \,dx, \text{ for all }h\in C^{\infty}_0(\Omega).
\end{equation}
\begin{thm}\label{mainthm}  Suppose $\Omega$ is an open set, and suppose that $\mathcal{A}$ satisfies the assumptions (\ref{ellip})--(\ref{conv}).

(i) Suppose that $\sigma \in L^{-1,p'}_{\rm{loc}}(\Omega)$ satisfies
\begin{equation}\label{pformupper}
\langle |h|^p, \sigma \rangle  \leq \lambda \int_{\Omega} \mathcal{A}(x,\nabla h)\cdot \nabla h dx, \text{ for all } h\in C^{\infty}_0(\Omega),
\end{equation}
with $\lambda\in (0, p^{\#})$.  In addition, suppose that
\begin{equation}\label{pformlower}
 - \langle |h|^p, \sigma \rangle  \leq \Lambda \int_{\Omega}\mathcal{A}(x,\nabla h)\cdot \nabla h dx, \text{ for all } h\in C^{\infty}_0(\Omega),
\end{equation}
for some $\Lambda>0$.  Then there exists a positive weak solution $u$ of (\ref{genschro}) (see Definition \ref{schrodef}) satisfying
\begin{equation}\label{mult1}
\int_{\Omega} \frac{|\nabla u|^p}{u^p}|h|^p dx\leq C_0\int_{\Omega}|\nabla h|^p dx,\; \text{ for all }h\in C^{\infty}_0(\Omega),
\end{equation}
for a positive constant $C_0=C_0(\Lambda, p)$.
Moreover, if $v=\log(u)$, then $v\in L^{1,p}_{\rm{loc}}(\Omega)$ is a weak solution of (\ref{genric}) satisfying
\begin{equation}\label{mult2}
\int_{\Omega} |\nabla v|^p|h|^p dx\leq C_0\int_{\Omega}|\nabla h|^p dx, \;\text{ for all }h\in C^{\infty}_0(\Omega).
\end{equation}
\indent (ii)  Conversely, if there is a solution $v\in L^{1,p}_{\rm{loc}}(\Omega)$ of (\ref{genric}) so that (\ref{mult2}) holds for a constant $C_0$, then the inequality (\ref{pformupper}) holds with $\lambda= (M/m)^p$ and (\ref{pformlower}) holds for a constant $\Lambda = \Lambda(C_0, M)$.
\end{thm}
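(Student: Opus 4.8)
The substance of the theorem is part (i); part (ii) comes quickly out of the weak formulation of \eqref{genric}, so I would dispose of it first and then turn to the approximation scheme behind part (i).

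\smallskip
\textbf{Part (ii).} Fix $h\in C^{\infty}_0(\Omega)$. Since $p>1$, $|h|^p\in C^1_0(\Omega)\subset L^{1,p}_0(\Omega)$ with $\nabla|h|^p=p|h|^{p-2}h\,\nabla h$, so by density it is admissible as a test function in \eqref{genric} (each term is continuous on $L^{1,p}_0(U)$ for $U\supset\mathrm{supp}\,h$ bounded, because $\mathcal{A}(x,\nabla v)\in L^{p'}_{\mathrm{loc}}$, $\mathcal{A}(x,\nabla v)\cdot\nabla v\in L^{1}_{\mathrm{loc}}$, $\sigma\in L^{-1,p'}_{\mathrm{loc}}$). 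This gives
\begin{equation*}
\langle\sigma,|h|^p\rangle=p\int_{\Omega}\mathcal{A}(x,\nabla v)\cdot\nabla h\,|h|^{p-2}h\,dx-(p-1)\int_{\Omega}\mathcal{A}(x,\nabla v)\cdot\nabla v\,|h|^p\,dx .
\end{equation*}
Bounding the first integral by $pM\int|\nabla v|^{p-1}|h|^{p-1}|\nabla h|$ and applying Young's inequality $ab\le\tfrac{\delta^{p'}}{p'}a^{p'}+\tfrac{\delta^{-p}}{p}b^{p}$ with $a=(|\nabla v|\,|h|)^{p-1}$, $b=|\nabla h|$, $\delta^{p'}=m/M$, the $\int|\nabla v|^p|h|^p$-term gets coefficient $(p-1)m$ and cancels against the negative second term, leaving $\langle\sigma,|h|^p\rangle\le\frac{M^p}{m^{p-1}}\int_{\Omega}|\nabla h|^p\,dx\le(M/m)^p\int_{\Omega}\mathcal{A}(x,\nabla h)\cdot\nabla h\,dx$, i.e. \eqref{pformupper} with $\lambda=(M/m)^p$. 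For the lower bound one argues identically but \emph{keeps} the $\int|\nabla v|^p|h|^p$-terms and estimates them by the hypothesis \eqref{mult2}, producing \eqref{pformlower} with a constant $\Lambda=\Lambda(C_0,M)$.

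\smallskip
\textbf{Part (i): approximation.} The plan is to solve on an exhaustion. Let $\{\Omega_k\}$ be an increasing exhaustion of $\Omega$ by bounded smooth open sets with $\overline{\Omega_k}\subset\Omega_{k+1}$, and choose bounded $\sigma_k$ (say smooth on $\overline{\Omega_k}$) with $\sigma_k\to\sigma$ in $L^{-1,p'}_{\mathrm{loc}}(\Omega)$ satisfying \eqref{pformupper}, \eqref{pformlower} on $\Omega_k$ with $\lambda_k\to\lambda<p^{\#}$ and $\sup_k\Lambda_k<\infty$. Since $\lambda_k<p^{\#}\le1$ for $k$ large, the operator $w\mapsto-\mathrm{div}\,\mathcal{A}(x,\nabla w)-\sigma_k|w|^{p-2}w$ is coercive on $L^{1,p}_0(\Omega_k)$ — indeed $\int_{\Omega_k}\mathcal{A}(x,\nabla w)\cdot\nabla w\,dx-\langle\sigma_k,|w|^p\rangle\ge(1-\lambda_k)m\|\nabla w\|_{L^p(\Omega_k)}^p$ — so the theory of pseudomonotone operators produces $u_k$ with $u_k-1\in L^{1,p}_0(\Omega_k)$ solving $-\mathrm{div}\,\mathcal{A}(x,\nabla u_k)=\sigma_k|u_k|^{p-2}u_k$ in $\Omega_k$. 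Testing with $u_k^{-}$ and using $\lambda_k<1$ forces $u_k\ge0$; since $\sigma_k$ is bounded, $u_k\in L^{\infty}(\Omega_k)$ and the weak Harnack inequality gives $u_k>0$, with $u_k\ge c_k>0$ on compacts. Put $v_k:=\log u_k\in L^{1,p}(\Omega_k)$; a direct computation using the homogeneity \eqref{homog} shows $v_k$ solves \eqref{genric} on $\Omega_k$ with $\sigma_k$ in place of $\sigma$.

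\smallskip
\textbf{Part (i): uniform a priori estimates.} Testing the $v_k$-equation with $|h|^p$ as in part (ii), but using only the \emph{lower} bound \eqref{pformlower} and absorbing $\int\mathcal{A}(x,\nabla v_k)\cdot\nabla h\,|h|^{p-2}h$ into $\int\mathcal{A}(x,\nabla v_k)\cdot\nabla v_k\,|h|^p$ by Young's inequality, I get, with $C_0=C_0(\Lambda,m,M,p)$ \emph{independent of $k$ and of $\lambda$},
\begin{equation*}
\int_{\Omega}|\nabla v_k|^p|h|^p\,dx\le C_0\int_{\Omega}|\nabla h|^p\,dx ,\qquad h\in C^{\infty}_0(\Omega);
\end{equation*}
this is what becomes \eqref{mult1}--\eqref{mult2} in the limit, and taking $h\equiv1$ on compacts it yields $k$-uniform bounds $\int_K|\nabla v_k|^p\le C_0\,\mathrm{cap}_p(K,\Omega')$ for $K\Subset\Omega'\Subset\Omega$, hence (by a Fefferman--Phong/John--Nirenberg argument, as in the linear case \cite{JMV11}) $k$-uniform local $\mathrm{BMO}$, exponential integrability, and Harnack/doubling bounds for $v_k=\log u_k$ that compensate for the absence of compactness. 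To place the limit in the class of Definition \ref{schrodef} I also need $k$-uniform $L^{1,p}_{\mathrm{loc}}$ bounds on $u_k$ and on $u_k^{p-1}$: testing the $u_k$-equation with $\varphi=u_k^{\beta}\eta^p$ ($\eta$ a cutoff) turns the right side into $\langle\sigma_k,|k_\eta|^p\rangle$ with $k_\eta=u_k^{(p-1+\beta)/p}\eta$, so \eqref{pformupper} applies; decomposing $\mathcal{A}(x,\nabla k_\eta)\cdot\nabla k_\eta$ via the fact that $\xi\mapsto(\mathcal{A}(x,\xi)\cdot\xi)^{1/p}$ is a semi-norm (by \eqref{homog} and \eqref{conv}), the Caccioppoli inequality for exponent $\beta$ closes when $\lambda<\frac{\beta p^{p}}{(p-1+\beta)^{p}}$. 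The choice $\beta=1$ (controlling $u_k$) needs $\lambda<1$, and $\beta=(p-1)^2$ (controlling $u_k^{p-1}$, as then $k_\eta=u_k^{p-1}\eta$ and $u_k^{\beta-1}=u_k^{(p-2)p}$) needs $\lambda<(p-1)^{2-p}$; combined — with additional test functions involving negative powers of $u_k$ when $1<p<2$ — this reaches the full range $\lambda<p^{\#}$ after a Moser/Brezis--Kato iteration. This is the sole place the sharp threshold $p^{\#}$ is used.

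\smallskip
\textbf{Part (i): passage to the limit.} By the uniform bounds, along a subsequence $u_k\rightharpoonup u$ in $L^{1,p}_{\mathrm{loc}}(\Omega)$, $u_k\to u$ a.e., $u>0$ quasi-everywhere (the doubling/Harnack bounds preclude degeneration), and $v_k\to v:=\log u$ a.e. The main obstacle will be upgrading this to $\nabla u_k\to\nabla u$ a.e.\ in $\Omega$: following the scheme of \cite{BBGVP, DMMOP}, one reduces it to showing $\int_K\bigl(\mathcal{A}(x,\nabla u_k)-\mathcal{A}(x,\nabla u)\bigr)\cdot(\nabla u_k-\nabla u)\,dx\to0$ for every compact $K$, after which the strict monotonicity \eqref{monotone} forces convergence in measure, hence a.e.\ convergence on a further subsequence. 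The hard part is that the usual measure-data machinery uses the right-hand side as a genuine weight, whereas here $\sigma$ is only a distribution: testing the $u_k$-equations with functions like $u_k^{p-1}T_\ell(u_k-\psi_\ell)\eta$ (truncations against smooth approximations $\psi_\ell$ of $u$, cut off by $\eta$), one must show the pairings $\langle\sigma_k,\,u_k^{p-1}T_\ell(u_k-\psi_\ell)\eta\rangle$ are negligible, which requires controlling these functions in $L^{1,p}_0$ on compacts uniformly — this is where the weighted estimate for $v_k$ above and the continuity \eqref{cont} of $\mathcal{A}$ are brought in — together with several further judicious choices of test functions. Granting $\nabla u_k\to\nabla u$ a.e.\ and convergence of the relevant local energies (so $\nabla u_k$ and $\nabla(u_k^{p-1})$ converge strongly in $L^p_{\mathrm{loc}}$), the rest is routine: $\mathcal{A}(x,\nabla u_k)\rightharpoonup\mathcal{A}(x,\nabla u)$ in $L^{p'}_{\mathrm{loc}}$ and $u_k^{p-1}\to u^{p-1}$ in $L^{1,p}_{\mathrm{loc}}$ give that $u$ solves \eqref{genschro} in the sense of Definition \ref{schrodef} (with $u,u^{p-1}\in L^{1,p}_{\mathrm{loc}}$ from the uniform bounds and weak lower semicontinuity); $v_k\to v$, $\nabla v_k\to\nabla v$ a.e.\ give that $v=\log u$ solves \eqref{genric}; and \eqref{mult1}--\eqref{mult2} follow by Fatou's lemma from the uniform weighted estimate. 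In short, the genuinely new difficulty, relative to the measure-data theory, is establishing convergence in measure of the gradients under a merely distributional hypothesis on $\sigma$; everything else — the two a priori estimates, solvability and positivity of the approximate problems, and the final limiting identifications — is comparatively standard once those estimates are in hand.
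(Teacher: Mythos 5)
Your architecture is essentially the paper's: part (ii) by testing (\ref{genric}) with $|h|^p$ (same constants), and part (i) by an exhaustion-plus-smoothing scheme, local existence via monotone operators, a logarithmic Caccioppoli estimate from (\ref{pformlower}), power-type Caccioppoli estimates from (\ref{pformupper}) with exactly the right thresholds ($\lambda<1$ for $u_k$, $\lambda<(p-1)^{2-p}$ for $u_k^{p-1}$, interpolation when $1<p<2$), BMO/doubling in place of compactness, and a limit passage hinging on convergence in measure of gradients. But the step you identify as the genuinely new difficulty is not actually proved: you sketch testing with $u_k^{p-1}T_\ell(u_k-\psi_\ell)\eta$ against smooth approximations of the limit and then write ``granting $\nabla u_k\to\nabla u$ a.e.'' The entire difficulty is to show the pairings of $\sigma_k$ with such test functions are negligible, and none of your estimates does this. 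The paper's Lemma \ref{convmeas} is structured differently: it is a Cauchy-in-measure comparison of two approximants $\tilde u_j,\tilde u_k$ (not of $u_k$ against the limit), using the test function $fgh^p$ with $f=(\mu-(\tilde u_j-\tilde u_k)_+)_+$ and $g=(2A-\max(\tilde u_j,\tilde u_k))_+$, the local representation $\sigma=\operatorname{div}\vec T$ with $\vec T\in L^{p'}$, and the uniform bounds (\ref{omega1uniformbd}); smallness comes from $0\le f\le\mu$ together with $\tilde u_j^{p-1}-\tilde u_k^{p-1}\le C\,\mu^{\min(p-1,1)}A^{\max(p-2,0)}$ on the relevant set. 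Your further claim that the local energies converge, so that $\nabla u_k$ and $\nabla(u_k^{p-1})$ converge strongly in $L^p_{\mathrm{loc}}$, is also unproved and is more than is needed: a.e.\ (or in-measure) convergence of gradients plus the uniform $L^p$ bounds and Vitali already give $\mathcal{A}(\cdot,\nabla u_k)\to\mathcal{A}(\cdot,\nabla u)$ in $L^1_{\mathrm{loc}}$, which suffices.

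A second genuine gap: your approximants carry no normalization. With $u_k-1\in L^{1,p}_0(\Omega_k)$ and the equation homogeneous, uniform doubling of $u_k^{qp}$ controls ratios of averages over nearby balls but not their size, so nothing prevents $u_k\to 0$ or $u_k\to\infty$ locally as $k\to\infty$; the assertion that ``the doubling/Harnack bounds preclude degeneration'' is unjustified as written, and your limit could be the zero function. The paper renormalizes $\int_B u_k^{qp}\,dx=1$ on a fixed ball $B$ (see (\ref{approxsol})) and runs a Harnack chain in each $\Omega_j$ to convert doubling into $k$-uniform local bounds, which is also what yields $u>0$ a.e.\ in the limit. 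Two smaller points: you posit smooth $\sigma_k$ satisfying (\ref{pformupper})--(\ref{pformlower}) relative to $\mathcal{A}$ itself, but mollification only preserves the form bounds relative to the spatially mollified operator $\mathcal{A}_{\varepsilon}$ (Lemma \ref{mollem}); this is why the paper mollifies $\mathcal{A}$ as well and why the continuity assumption (\ref{cont}) enters when the mollification is removed in the limit, a step your sketch leaves implicit. And no Moser/Br\'ezis--Kato iteration is needed to reach $\lambda<p^{\#}$: the two Caccioppoli estimates at $\beta=1$ and $\beta=(p-1)^2$, combined with the logarithmic bound, already cover the full range.
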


One can also carry out a more local version of Theorem \ref{mainthm}, akin to Theorem 1.2 in \cite{JMV11}, using only local conditions on the operator $\mathcal{A}$ and potential $\sigma$.  Since all our arguments are local, this is a straightforward modification of the proof that follows, cf. Section 3 of \cite{JMV11}.

\begin{rem}\label{noconv}  With convexity assumption on $\mathcal{A}$ removed, one can still reach the conclusion of the part (i) of Theorem \ref{mainthm}, provided that
$\lambda < \frac{m}{M} p^{\#}$.  We leave it to the reader to check that Lemmas \ref{est1} and \ref{est2} below can be pushed through in this range of $\lambda$ without the convexity property.  The two lemmas just mentioned are where the convexity plays a role.
\end{rem}

Regarding statement (ii) of Theorem \ref{mainthm}, it is the case even when $p=2$ that in general the constant $\lambda$ needs to depend on $M$ and $m$.  See Section 7 of \cite{JMV11}.

\section{Preliminaries}\label{prelim}

We re-iterate that throughout this paper we will assume (unless stated otherwise) that $\mathcal{A}:\Omega\times \mathbf{R}^n\rightarrow \mathbf{R}^n$ satisfies (\ref{ellip})--(\ref{conv}).

\subsection{Notation}\label{notation}  We shall denote by $C$ a positive constant which may depend on $n$, $p$, $m$, and $M$.  Any additional dependencies (beyond $n$, $p$, $m$, and $M$) of a constant $C$ will be stated explicitly, for example a constant $C(\lambda)$ may depend on $\lambda$, as well as $n$, $p$, $m$, and $M$.  Within a proof a constant $C$ may change from line to line.

For an open set $\Omega\subset \mathbf{R}^n$, we denote by $C^{\infty}_0(\Omega)$ the space of infinitely differentiable functions with compact support in $\Omega$.  Define the energy space $L^{1,p}_0(\Omega)$ to be the completion of $C^{\infty}_0(\Omega)$ in the semi-norm $||\nabla (\, \cdot\,)||_{L^p(\Omega)}$.  We say that $f\in L^{1,p}_{\text{loc}}(\Omega)$ if $f\in L^{p}_{\text{loc}}(\Omega)$ and $f\varphi \in L^{1,p}_0(\Omega)$  for all $\varphi\in C^{\infty}_0(\Omega)$.  Let $L^{-1,p'}(\Omega)$ be the dual space of $L^{1,p}_0(\Omega)$.  We say that $\sigma \in L^{-1,p'}_{\text{loc}}(\Omega)$ if $\sigma \varphi \in L^{-1,p'}(\Omega)$ whenever $\varphi\in C^{\infty}_0(\Omega)$.  We define $W^{1,p}(\Omega)$ as those functions $f\in L^p(\Omega)$ with weak derivative $\nabla f\in (L^{p}(\Omega))^n$.

If $\Omega\subset \mathbf{R}^n$ is a \emph{bounded} open set, then the Poincar\'{e} inequality guarantees that $L^{1,p}_0(\Omega)$ is a Banach space with norm $||\nabla (\, \cdot\,)||_{L^p(\Omega)}$, for any $1<p<\infty$, see for example Br\'{e}zis \cite{Bre11}, Corollary 9.19.  We shall only use that $L^{1,p}_0(\Omega)$ is a Banach space when the underlying set $\Omega$ is bounded\footnote{If $p<n$, then the result is true regardless of $\Omega$, but it is a delicate issue for unbounded sets if $p\geq n$, see \cite{Maz85}, Sec. 15.2}.  Provided $\Omega$ is a bounded set, every $\sigma \in L^{-1,p'}(\Omega)$ may be represented as $\sigma  = \text{div}(\vec T)$, with $\vec T\in (L^{p'}(\Omega))^n$, see for example \cite{Bre11},  Proposition 9.20.

As a result of the previous discussion,  if $\sigma\in L^{-1,p'}_{\text{loc}}(\Omega)$, and $U$ is a bounded open subset of $\Omega$, then there exists with $\vec T\in (L^{p'}(\Omega))^n$ such that $\sigma = \text{div}(\vec T)$ in $U$.  Therefore, if $u\in L_0^{1,p}(U)$, then
$$\langle u, \sigma \rangle = \int_{U} \nabla u\cdot \vec T dx. 
$$

\subsection{Local smoothing}

We begin this section with some remarks about mollification.  Fix $\varphi$ so that $\varphi\in C^{\infty}_0(B_1(0))$, $\varphi\geq 0$, $\varphi$ is radially symmetric, and $\int_{B_1(0)}\varphi(x)dx =1.$
We denote $\varphi_{\varepsilon}(x) = \varepsilon^{-n} \varphi(x/\varepsilon).$

For the majority of this paper, we will use the mollified operator $\mathcal{A}_{\varepsilon}$, defined (for a smooth function $f$) by
\begin{equation}\label{Aepsilon}\mathcal{A}_{\varepsilon}(x, \nabla f(x)) = \int_{B(x,\varepsilon)} \varphi_{\varepsilon}(y)\mathcal{A}(x+y, \nabla f(x))dy.
\end{equation}
In other words, we only mollify the spatial variable, and leave the gradient variable unchanged.
\begin{rem}  Let $\varepsilon>0$ and suppose $U\subset\subset \Omega$ with ${\rm{dist}}(U, \partial\Omega)>\varepsilon$.  Then $\mathcal{A}_{\varepsilon}: U\times \mathbf{R}^n\rightarrow \mathbf{R}^n$ satisfies (\ref{ellip})--(\ref{monotone}) and (\ref{conv}) inside $U$.
\end{rem}

The next lemma concerns how the inequality (\ref{gensob}) behaves under mollification:

\begin{lem}\label{mollem}  Let $U\subset\subset \Omega$, and let $\varepsilon>0$ be such that $\varepsilon<{\rm{dist}}(U, \partial\Omega)$.  Let $\lambda>0$, suppose that $\sigma$ satisfies
\begin{equation}
\langle |h|^p ,\sigma \rangle  \leq \lambda \int_{\Omega} \mathcal{A}(\cdot,\nabla h)\cdot\nabla h dx, \text{ for all }h\in C^{\infty}_0(\Omega).
\end{equation}
Then, with $\sigma_{\varepsilon} = \varphi_{\varepsilon}* \sigma$, we have
\begin{equation}\label{smoothemb}
\int_U |h|^p d\sigma_{\varepsilon} \leq \lambda \int_U \mathcal{A}_{\varepsilon}(\cdot,\nabla h)\cdot \nabla h dx, \text{ for all }h\in L^{1,p}_0(U),
\end{equation}
where $d\sigma_{\varepsilon} = \sigma_{\varepsilon} dx$.
\end{lem}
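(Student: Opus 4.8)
\textbf{Proof plan for Lemma \ref{mollem}.}

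The plan is to test the hypothesized inequality for $\sigma$ against a translated version of $h$ and then average over the translation parameter using the mollifier $\varphi_\varepsilon$, exploiting the fact that convolution against $\varphi_\varepsilon$ is a self-adjoint operation. First I would reduce to $h\in C^\infty_0(U)$ by a density argument: since $L^{1,p}_0(U)$ is the completion of $C^\infty_0(U)$ in the $\|\nabla(\cdot)\|_{L^p(U)}$ semi-norm, and $U$ is bounded so that the Poincar\'e inequality makes this a genuine norm, it suffices to prove (\ref{smoothemb}) for $h\in C^\infty_0(U)$; both sides of (\ref{smoothemb}) are continuous under $L^{1,p}_0(U)$-convergence because $\sigma_\varepsilon$ is a bounded (smooth) function on $U$ — indeed $\sigma_\varepsilon\in C^\infty$ and lies in $L^{-1,p'}$ of a neighbourhood, so $h\mapsto\int_U|h|^p\,d\sigma_\varepsilon$ is continuous by Lemma-type reasoning on the pairing — and the right-hand side is controlled by the ellipticity/boundedness bound (\ref{ellip}).

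Next, for fixed $y\in B(0,\varepsilon)$, consider the translate $h_y(x):=h(x-y)$, which lies in $C^\infty_0(\Omega)$ since $\varepsilon<\mathrm{dist}(U,\partial\Omega)$. Applying the hypothesis to $h_y$ gives
\begin{equation*}
\langle |h_y|^p,\sigma\rangle\leq\lambda\int_\Omega\mathcal{A}(x,\nabla h(x-y))\cdot\nabla h(x-y)\,dx.
\end{equation*}
I would now multiply by $\varphi_\varepsilon(y)\geq0$ and integrate in $y$ over $B(0,\varepsilon)$. On the left, $\langle|h_y|^p,\sigma\rangle=\langle|h(\cdot-y)|^p,\sigma(\cdot)\rangle$, and integrating against $\varphi_\varepsilon(y)$ and using that $\varphi_\varepsilon$ is even (radially symmetric) yields, by Fubini on the dual pairing, $\int\varphi_\varepsilon(y)\langle|h_y|^p,\sigma\rangle\,dy=\langle|h|^p,\varphi_\varepsilon*\sigma\rangle=\int_U|h|^p\sigma_\varepsilon\,dx=\int_U|h|^p\,d\sigma_\varepsilon$. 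On the right, change variables $z=x-y$ in the inner integral and then swap the order of integration: $\int_{B(0,\varepsilon)}\varphi_\varepsilon(y)\int_\Omega\mathcal{A}(z+y,\nabla h(z))\cdot\nabla h(z)\,dz\,dy=\int_\Omega\big(\int_{B(0,\varepsilon)}\varphi_\varepsilon(y)\mathcal{A}(z+y,\nabla h(z))\,dy\big)\cdot\nabla h(z)\,dz=\int_U\mathcal{A}_\varepsilon(z,\nabla h(z))\cdot\nabla h(z)\,dz$, which is exactly (\ref{smoothemb}). (The integral is effectively over $U$ since $\nabla h$ is supported there, and $\mathcal{A}_\varepsilon$ is defined on $U$ by the remark preceding the lemma.)

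The main obstacle — really the only nontrivial point — is justifying the interchange of the $y$-integration with the distributional pairing $\langle\cdot,\sigma\rangle$, i.e.\ the identity $\int_{B(0,\varepsilon)}\varphi_\varepsilon(y)\langle|h(\cdot-y)|^p,\sigma\rangle\,dy=\langle|h|^p,\varphi_\varepsilon*\sigma\rangle$. Here I would work on a fixed bounded open set $V$ with $\mathrm{supp}\,h + B(0,\varepsilon)\subset\subset V\subset\subset\Omega$, represent $\sigma=\mathrm{div}(\vec T)$ on $V$ with $\vec T\in(L^{p'}(V))^n$ as in Section \ref{notation}, so that the pairing becomes an honest Lebesgue integral $\langle|h_y|^p,\sigma\rangle=-\int_V\nabla(|h_y|^p)\cdot\vec T\,dx$; then the interchange is a routine application of Fubini's theorem, since $(y,x)\mapsto\varphi_\varepsilon(y)\nabla(|h(x-y)|^p)\cdot\vec T(x)$ is integrable on $B(0,\varepsilon)\times V$ (the $\nabla(|h_y|^p)$ factor is uniformly bounded with fixed compact support, and $\vec T\in L^{p'}\subset L^1_{\mathrm{loc}}$), and unwinding the convolution identifies the result with $-\int_V\nabla(|h|^p)\cdot(\varphi_\varepsilon*\vec T)\,dx=\langle|h|^p,\mathrm{div}(\varphi_\varepsilon*\vec T)\rangle=\langle|h|^p,\varphi_\varepsilon*\sigma\rangle$, using $\varphi_\varepsilon$ even so that $\mathrm{div}(\varphi_\varepsilon*\vec T)=\varphi_\varepsilon*\mathrm{div}(\vec T)$. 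Everything else is a change of variables and a second, elementary Fubini for the right-hand side.
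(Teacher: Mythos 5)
Your proposal is correct and follows essentially the same route as the paper: reduce to $h\in C^{\infty}_0(U)$ by density, apply the hypothesis to the translates $h(\cdot-y)$, average against $\varphi_{\varepsilon}(y)$, and identify the two sides via the symmetry of $\varphi$ and a change of variables. The only difference is that you justify the interchange of the $y$-integration with the pairing by hand, using the local representation $\sigma=\mathrm{div}(\vec T)$ and Fubini, whereas the paper simply cites the standard interchange of mollification and distribution (Lemma 6.8 of Lieb--Loss); both are fine.
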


\begin{proof}
Notice that $\sigma_{\varepsilon}\in C^{\infty}(\overline{U})$.  By density, and the continuity of $\sigma_{\varepsilon}$ and $\mathcal{A}_{\varepsilon}$, it suffices to prove (\ref{smoothemb}) for $h\in C^{\infty}_0(U)$.  We first note that by the interchange of mollification and the distribution (see for example Lemma 6.8 of \cite{LL01}), we have
$$\langle \sigma, \varphi_{\varepsilon}*|h|^p\rangle= \int_{B(0,\varepsilon)}\varphi_{\varepsilon}(t) \langle\sigma, |h(\cdot-t)|^p \rangle dt.
$$
By choice of $U$ and $\varepsilon$, note that $h(\,\cdot-t)\in C^{\infty}_0(\Omega)$ for all $t\in B_{\varepsilon}(0)$.  Hence
\begin{equation}\begin{split}\nonumber\langle\sigma_{\varepsilon}, |h|^p\rangle & \leq \lambda \int_{B(0,\varepsilon)} \varphi_{\varepsilon}(t)\Bigl(\int_{U}\mathcal{A}(x,\nabla h(x-t))\cdot \nabla h(x-t) dx\Bigl)dt \\
&= \lambda \int_{\Omega}\mathcal{A}_{\varepsilon}(x, \nabla h(x))\cdot \nabla h(x) dx,
\end{split}\end{equation}
which proves the lemma.
\end{proof}

The convexity property (\ref{conv}) combines with the homogeneity property (\ref{homog}) to yield Minkowski's inequality:   For $\Gamma_1, \Gamma_2\in (L^p(\Omega))^n$, we have
\begin{equation}\label{mink}\begin{split}
\Bigl(\int_{\Omega} \mathcal{A}&(\cdot, \Gamma_1+\Gamma_2) \cdot(\Gamma_1 + \Gamma_2)dx\Bigl)^{1/p} \\
&\leq \Bigl(\int_{\Omega} \mathcal{A}(\cdot, \Gamma_1)\cdot(\Gamma_1)dx\Bigl)^{1/p} + \Bigl(\int_{\Omega} \mathcal{A}(\cdot, \Gamma_2)\cdot( \Gamma_2)dx\Bigl)^{1/p}.
\end{split}\end{equation}
The same statement holds for $\mathcal{A}_{\varepsilon}$ inside $U\subset\subset\Omega$, provided ${\rm{dist}}(U, \partial\Omega)>\varepsilon$: for any $\Gamma_1, \Gamma_2\in (L^p(U))^n$,
\begin{equation}\nonumber\begin{split}
\Bigl(\int_{U} \mathcal{A}_{\varepsilon}&(\cdot, \Gamma_1+\Gamma_2) \cdot(\Gamma_1 + \Gamma_2)dx\Bigl)^{1/p} \\
&\leq \Bigl(\int_{U} \mathcal{A}_{\varepsilon}(\cdot, \Gamma_1)\cdot(\Gamma_1)dx\Bigl)^{1/p} + \Bigl(\int_{U} \mathcal{A}_{\varepsilon}(\cdot, \Gamma_2)\cdot( \Gamma_2)dx\Bigl)^{1/p}.
\end{split}\end{equation}


\subsection{Local existence}

The next lemma concerns a local existence result. It will be used to produce a sequence of approximate solutions to (\ref{genschro}).

\begin{lem}\label{existencelem}  Suppose that $V$ is a bounded domain with a ball $B\subset\subset V$.  Suppose that $\tilde{\mathcal{A}} : V\times \mathbf{R}^n \rightarrow \mathbf{R}$ satisfies (\ref{ellip})--(\ref{monotone}).  For $0<\lambda<1$, let $\tilde \sigma \in C^{\infty}(\overline{V})$ satisfy
\begin{equation}\label{tildesigmaest}\int_{V}|h|^p d\tilde\sigma \leq \lambda \int_{V}\tilde{\mathcal{A}}(x, \nabla h)\cdot \nabla h dx, \text{ for all }h\in C^{\infty}_0(V).
\end{equation}
Then there exists a positive solution $v\in C^{\alpha}_{\text{loc}}(V) \cap W^{1,p}(V)$ of $$-{\rm div}(\tilde{\mathcal{A}}(\cdot,\nabla v)) = \tilde \sigma v^{p-1},$$
so that $\int_B v^{qp} dx = 1$.  Here $q=\max(p-1,1)$.
Furthermore, $v$ satisfies the Harnack inequality in $V$.
\end{lem}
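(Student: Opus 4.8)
The plan is to obtain $v$ as a minimizer of a suitable constrained variational problem on $W^{1,p}_0(V)$, using the strict inequality $\lambda<1$ to guarantee coercivity. Concretely, since $\tilde\sigma\in C^\infty(\overline V)$ we may split it as $\tilde\sigma = \tilde\sigma_+ - \tilde\sigma_-$ with bounded parts, and consider the functional
\begin{equation*}
J(w) = \int_V \tilde{\mathcal A}(x,\nabla w)\cdot\nabla w\,dx - \int_V |w|^p\,d\tilde\sigma
\end{equation*}
on the set $\mathcal K = \{ w\in W^{1,p}_0(V) : \int_B |w|^{qp}\,dx = 1\}$, where $q=\max(p-1,1)$. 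By hypothesis (\ref{tildesigmaest}) and density of $C^\infty_0(V)$ in $W^{1,p}_0(V)$ (using that $V$ is bounded, so $W^{1,p}_0(V)=L^{1,p}_0(V)$ is a Banach space with the gradient norm), we have $J(w)\geq (1-\lambda)\int_V \tilde{\mathcal A}(x,\nabla w)\cdot\nabla w\,dx \geq (1-\lambda)m\int_V|\nabla w|^p\,dx$ for all $w\in W^{1,p}_0(V)$; thus $J$ is coercive on $W^{1,p}_0(V)$, hence bounded below on $\mathcal K$. I would then take a minimizing sequence $w_k\in\mathcal K$, extract a weakly convergent subsequence $w_k\rightharpoonup v$ in $W^{1,p}_0(V)$, and pass to the limit: the term $\int_V\tilde{\mathcal A}(x,\nabla w)\cdot\nabla w\,dx$ is weakly lower semicontinuous by the convexity assumption (\ref{conv}) combined with continuity in $\xi$ (a standard result of Tonelli/De Giorgi type), while $\int_V|w_k|^p\,d\tilde\sigma\to\int_V|v|^p\,d\tilde\sigma$ and $\int_B|w_k|^{qp}\,dx\to\int_B|v|^{qp}\,dx=1$ by the Rellich–Kondrachov compact embedding $W^{1,p}_0(V)\hookrightarrow L^{qp}(V)$ (note $qp<p^\ast$ when $p<n$, and the embedding is compact for all exponents when $p\geq n$; one may also simply assume $V$ has Lipschitz boundary or is a ball, which suffices here). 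Hence $v\in\mathcal K$ and $J(v)=\inf_{\mathcal K}J$.

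Next I would derive the Euler–Lagrange equation. Since the constraint $\int_B|w|^{qp}\,dx=1$ is smooth and its derivative at $v$ is nonzero (as $v\not\equiv 0$ on $B$), there is a Lagrange multiplier $\mu\in\mathbf R$ such that
\begin{equation*}
\int_V \tilde{\mathcal A}(x,\nabla v)\cdot\nabla\varphi\,dx - \int_V |v|^{p-2}v\varphi\,d\tilde\sigma = \mu\int_B |v|^{qp-2}v\varphi\,dx
\end{equation*}
for all $\varphi\in C^\infty_0(V)$ (using homogeneity (\ref{homog}) to differentiate $\tilde{\mathcal A}(x,\nabla w)\cdot\nabla w$, which gives $p\,\tilde{\mathcal A}(x,\nabla v)\cdot\nabla\varphi$; the $\tilde\sigma$ term and the constraint term are differentiated directly). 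I would then check $\mu>0$: testing with $\varphi=v$ gives $\mu\int_B|v|^{qp}\,dx = \mu = \int_V\tilde{\mathcal A}(x,\nabla v)\cdot\nabla v\,dx - \int_V|v|^p\,d\tilde\sigma = J(v)\geq 0$, and in fact $\mu=0$ would force $\nabla v\equiv 0$ by coercivity, contradicting $\int_B|v|^{qp}=1$; so $\mu>0$. Replacing $v$ by $c\,v$ with an appropriate constant $c>0$ (chosen so the normalization $\int_B v^{qp}=1$ is preserved — this works precisely because both the $p$-homogeneous PDE terms and the $qp$-homogeneous lower-order term scale, with the scaling factor absorbed into $c$; this is the role of the exponent $q$) yields a function, still called $v$, solving $-{\rm div}(\tilde{\mathcal A}(\cdot,\nabla v)) = \tilde\sigma\, |v|^{p-2}v$ weakly in $V$ with $\int_B v^{qp}\,dx=1$.

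It remains to show $v$ can be taken positive, and to establish regularity and Harnack. Since $J(|w|)=J(w)$ (the left term because $|\nabla|w||=|\nabla w|$ a.e., the right by the $|\cdot|^p$), the minimizer may be taken nonnegative; then $v\geq 0$, $v\not\equiv 0$. Writing $\tilde\sigma|v|^{p-2}v = b(x)v^{p-1}$ with $b=\tilde\sigma\in L^\infty_{\rm loc}$, the equation $-{\rm div}(\tilde{\mathcal A}(\cdot,\nabla v))=b\,v^{p-1}$ is a quasilinear equation with a bounded (locally) zeroth-order coefficient; by the De Giorgi–Nash–Moser theory for such equations (Serrin, Ladyzhenskaya–Ural'tseva, and in the form stated in Trudinger's work and Malý–Ziemer or Heinonen–Kilpeläinen–Martio), $v\in C^\alpha_{\rm loc}(V)$, and the weak Harnack/Harnack inequality holds: nonnegative supersolutions satisfy the weak Harnack inequality and, since $v$ is also a subsolution with the same structure, the full Harnack inequality $\sup_{B'} v\leq C\inf_{B'}v$ on balls $B'\subset\subset V$. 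In particular $v>0$ everywhere in $V$. Finally $v\in W^{1,p}(V)$ is immediate since $v\in W^{1,p}_0(V)\subset W^{1,p}(V)$.

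\textbf{Main obstacle.} The delicate point is the interplay between the normalization $\int_B v^{qp}=1$ and the homogeneity of the equation: one must choose the constraint exponent $qp$ with $q=\max(p-1,1)$ so that after finding a Lagrange-multiplier solution one can rescale to kill $\mu$ while keeping the normalization (the Lagrange multiplier problem and the original equation have different homogeneities, $p$ versus $qp$, unless one is careful). A secondary technical point is the weak lower semicontinuity of $w\mapsto\int_V\tilde{\mathcal A}(x,\nabla w)\cdot\nabla w\,dx$, which genuinely needs the convexity hypothesis (\ref{conv}) — this is exactly where it enters — together with the measurability/continuity of $\tilde{\mathcal A}$; and one must confirm the compact embedding $W^{1,p}_0(V)\hookrightarrow L^{qp}(V)$ holds for the $V$ at hand (for a ball, or any bounded Lipschitz $V$, this is classical; $qp<p^\ast$ when $p<n$, and for $p\geq n$ it embeds compactly into every $L^r$).
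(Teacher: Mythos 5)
Your constrained-minimization scheme has two genuine gaps, and the second one is fatal to the stated conclusion. First, the equation in the lemma is not the Euler--Lagrange equation of $w\mapsto\int_V\tilde{\mathcal A}(x,\nabla w)\cdot\nabla w\,dx$ under the hypotheses you are given. Writing $F(x,\xi)=\tilde{\mathcal A}(x,\xi)\cdot\xi$, the first variation produces $-{\rm div}(\nabla_\xi F(x,\nabla v))$, and this equals $-p\,{\rm div}(\tilde{\mathcal A}(x,\nabla v))$ only when $\tilde{\mathcal A}=\tfrac1p\nabla_\xi F$, i.e.\ when the operator has variational structure; homogeneity (\ref{homog}) gives Euler's identity $\nabla_\xi F\cdot\xi=pF$, not $\nabla_\xi F=p\tilde{\mathcal A}$. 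Already in the linear case $p=2$ with a nonsymmetric elliptic matrix, your minimizer solves the equation for the symmetrized matrix, not the given one. (Note also that the lemma assumes only (\ref{ellip})--(\ref{monotone}); the convexity (\ref{conv}) you invoke for lower semicontinuity is not among its hypotheses.) Second, even granting variational structure, the Lagrange multiplier cannot be removed. Your minimizer satisfies $-{\rm div}(\tilde{\mathcal A}(\cdot,\nabla v))=\tilde\sigma|v|^{p-2}v+\mu\,|v|^{qp-2}v\,\chi_B$ with $\mu=J(v)>0$. Replacing $v$ by $cv$ rescales the left side by $c^{p-1}$ and the multiplier term by $c^{qp-1}$, so for $p>2$ no choice of $c$ kills it, and the only rescaling preserving $\int_B v^{qp}=1$ is $c=1$; for $p\le 2$ the homogeneities match and scaling changes nothing at all. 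So you end up with a perturbed (eigenvalue-type) equation, not the one claimed. The exponent $q$ in the lemma is not there to make a constraint work: since both sides of the equation are $(p-1)$-homogeneous in $v$, any positive constant multiple of a solution is again a solution, and the normalization $\int_B v^{qp}\,dx=1$ is simply imposed at the very end.

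The paper avoids these issues by not using a constrained problem at all: it solves the Dirichlet problem $-{\rm div}(\tilde{\mathcal A}(\cdot,\nabla\tilde v))=\tilde\sigma|\tilde v|^{p-2}\tilde v$ with $\tilde v-1\in L^{1,p}_0(V)$ by the theory of monotone operators, where (\ref{tildesigmaest}) with $\lambda<1$ gives coercivity and the smoothness of $\tilde\sigma$ gives weak continuity of the lower-order term; the nontrivial boundary datum rules out the zero solution. Nonnegativity follows by testing with $\min(0,\tilde v)$ and using (\ref{tildesigmaest}) again, H\"older continuity and the Harnack inequality come from Serrin's results (using $\tilde\sigma\in C^\infty(\overline V)$), and the normalization on $B$ is obtained afterwards by multiplying by a constant, which is legitimate by the homogeneity (\ref{homog}). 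If you want to keep a variational flavor, you would have to assume $\tilde{\mathcal A}$ is a gradient in $\xi$ and minimize with the inhomogeneous boundary condition rather than a normalization constraint; as written, your argument does not yield the lemma.
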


\begin{proof}  The existence of a solution follows from the  theory of monotone operators.  Indeed, note that (\ref{tildesigmaest}) guarantees coercivity in the Sobolev space $L^{1,p}_0(V)$ of the operator $\mathcal{A}(\cdot, \nabla v) - \tilde\sigma |v|^{p-2}v$.  On the other hand, the smoothness of $\tilde\sigma$ ensures that the associated functional is weakly continuous.  From these two facts one can follow standard theory, see e.g. Chapter 6 of \cite{MZ97}, to obtain the existence of a solution of the equation
\begin{equation}\nonumber
\begin{cases}
-\text{div}(\tilde{\mathcal{A}}(\cdot, \nabla \tilde v)) = \tilde \sigma |\tilde v|^{p-2}\tilde v,\\
\tilde v -1 \in L^{1,p}_0(V).
\end{cases}\end{equation}
The solution $\tilde v$ is nonnegative.  To see this, test the weak formulation of the preceding equation with $h = \min(0, \tilde v )\in L^{1,p}_0(V)$.  Note that
$$\int_{V}\tilde{\mathcal{A}}(\cdot, \nabla h)\cdot \nabla h dx = \int_{V}\tilde{\mathcal{A}}(\cdot, \nabla \tilde v)\cdot \nabla h dx = \int_V |\tilde v |^{p-2}\tilde v h \tilde \sigma dx.
$$
The last integral on the right hand side is equal to $\int_V |h|^p \tilde\sigma dx$. Applying (\ref{tildesigmaest}), this integral is in turn less than $\lambda\int_{V}\tilde{\mathcal{A}}(\cdot, \nabla h)\cdot \nabla h dx$, and hence $(1-\lambda)\int_{V}\tilde{\mathcal{A}}(\cdot, \nabla h)\cdot \nabla h dx\leq 0$. Since $\lambda\in (0,1)$, it follows from (\ref{ellip}) that $\int_{V}|\nabla h|^p dx = 0$.  We conclude that $\min(0, \tilde v) = 0$ quasi-everywhere, as required.

Using the smoothness of $\tilde\sigma$, we apply the results of Serrin \cite{Ser2} to yield the Harnack inequality for $\tilde v$, along with the H\"{o}lder continuity (for all $1<p<\infty$).  To conclude the proof, it remains to renormalize $\tilde v$ in order to obtain the given integrability property on $B$.
\end{proof}

\subsection{Weak reverse H\"{o}lder inequalities and BMO}\label{wrhsec}

In this section, we recall a result from \cite{JMV11} regarding weak reverse H\"{o}lder inequalities.  For $p\in (1, \infty)$ and an open set $U$, we say $u\in BMO(U)$ if there is a positive constant $D_U$ such that
\begin{equation}\label{BMOdef}\dashint_{B(x,r)} \!|u(y)-\dashint_{B(x,r)} \!u(z) dz|^p dy \leq D_U, \!\text{ for any ball }B(x,2r)\!\subset\! U.
\end{equation}
A well known consequence of the John-Nirenberg inequality is that one can replace the exponent $p$ in (\ref{BMOdef}) with any $0<q<\infty$, and obtain a comparable semi-norm.  We say that $u\in BMO_{\text{loc}}(\Omega)$ if for each compactly supported open set $U\subset\subset\Omega$, there is a positive constant $D_U>0$ so that (\ref{BMOdef}) holds.

\begin{defn}  Let $U\subset\mathbf{R}^n$ be an open set.

(a) A nonnegative measurable function $w$ is said to be \textit{doubling} in $U$ if there exists a constant $A_U>0$ such that
\begin{equation}\label{ld}\dashint_{B(x,2r)} w\, dx \leq A_U\dashint_{B(x,r)} w \, dx, \, \, \textrm{for all balls }B(x,4r)\subset U.
\end{equation}

(b) A nonnegative measurable function $w$ is said to satisfy a \textit{weak reverse H\"{o}lder inequality} in $U$ if there exist constants $q>1$ and $B_U>0$ such that
\begin{equation}\label{wrh}\Bigl(\dashint_{B(x,r)} w^q dx\Bigl)^{1/q} \leq B_U\dashint_{B(x,2r)} w\,dx, \, \, \textrm{for all balls }B(x,2r)\subset U.
\end{equation}
\end{defn}

Our argument hinges on the following result.

\begin{prop} \label{locdoub} Let $U$ be an open set. Suppose that $w$ satisfies the weak reverse H\"{o}lder inequality (\ref{wrh}) in $U$.  Then $w$ is doubling in $U$ if and only if $\log(w) \in BMO(U)$.  In particular, suppose $w$ satisfies (\ref{wrh}), and there exists a constant $D_U$ such that for any ball $B(x, 2s)\subset U$
\begin{equation}\label{bmo1}
\dashint_{B(x,s)} \!|\! \log w(y) - \dashint_{B(x,s)} \!\log w(z) dz|^p dy \leq D_U.
\end{equation}
Then there is a constant $C(B_U, D_U)>0$, such that for any ball $B(x,4r)\subset U$,
\begin{equation}\label{doub}
\dashint_{B(x,2r)} w\, dx \leq C(B_U, D_U) \dashint_{B(x,r)} w\, dx.
\end{equation}
\end{prop}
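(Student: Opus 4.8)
The plan is to first establish the forward direction (doubling $\Leftrightarrow \log w \in BMO$ under the reverse Hölder hypothesis), and then note that the final ``in particular'' assertion is an immediate quantitative consequence. The backward implication — that $\log w \in BMO(U)$ together with the weak reverse Hölder inequality (\ref{wrh}) forces $w$ to be doubling — is the one that matters for the applications, so I would concentrate there. The idea is: fix a ball $B(x,4r)\subset U$ and write $c = \dashint_{B(x,s)} \log w$ for the relevant sub-ball. Split the integral of $w$ over $B(x,2r)$ by comparing $w$ with $e^{c}$. On the one hand, by Jensen's inequality $\dashint_{B(x,r)} w \geq \exp\bigl(\dashint_{B(x,r)} \log w\bigr)$, and the $BMO$ hypothesis (\ref{bmo1}) (with the John--Nirenberg improvement allowing exponent $p$ to be replaced by exponent $1$) controls $\bigl|\dashint_{B(x,r)} \log w - \dashint_{B(x,2r)} \log w\bigr|$ by $C D_U$, so $\dashint_{B(x,r)} w \gtrsim e^{c}$ with $c$ now taken as the mean over $B(x,2r)$. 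On the other hand, $\dashint_{B(x,2r)} w$ is controlled \emph{above} by $e^{c}$ times an exponential-integral factor: writing $w = e^{c}\cdot e^{\log w - c}$, one uses the reverse Hölder inequality to upgrade to an $L^q$ average and then the John--Nirenberg inequality in the exponential form, $\dashint_{B} \exp\bigl(\delta|\log w - \dashint_B \log w|\bigr)\,dx \leq C$ for a small $\delta>0$ depending only on $D_U$ and $n$, to conclude $\dashint_{B(x,2r)} w \lesssim e^{c}$. Chaining the two bounds gives (\ref{doub}) with a constant depending only on $B_U$, $D_U$ (and the dimensional and exponent parameters).

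For the converse direction (doubling $\Rightarrow \log w \in BMO$) — needed to make the ``if and only if'' complete — I would invoke the standard fact that a doubling weight satisfying a reverse Hölder inequality is an $A_\infty$ weight on $U$ (in the local sense), and $A_\infty$ weights have logarithms in $BMO$; alternatively one argues directly: doubling plus weak reverse Hölder yields, by a self-improvement argument, that $w\,dx$ is comparable on concentric balls, from which $\dashint_B |\log w - \langle \log w\rangle_B|$ is bounded via the elementary inequality $|\log t| \leq t + 1/t$ applied to $t = w/\langle w\rangle_B$, using both the doubling upper bound and a lower bound for $\langle w \rangle_B$ coming from the reverse Hölder inequality. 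Since this direction is not what is used downstream and is essentially classical, I would keep it brief or simply cite the relevant statement in \cite{JMV11}.

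The final ``in particular'' clause then follows with no extra work: the hypothesis (\ref{bmo1}) is precisely the $BMO$ estimate (\ref{BMOdef}) on sub-balls with explicit constant $D_U$, so the backward implication applies and delivers (\ref{doub}) with the asserted dependence $C = C(B_U, D_U)$.

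The main obstacle I anticipate is bookkeeping the geometry and the constants: one must be careful that all the balls appearing (the ball of radius $r$, its double $2r$, and the various balls on which John--Nirenberg and the reverse Hölder inequality are applied) remain inside $U$, which is why the hypothesis is stated for $B(x,4r)\subset U$; and one must track that the small exponent $\delta$ in the exponential John--Nirenberg bound depends only on the $BMO$ constant $D_U$ (and $n$, $p$), not on the ball, so that the resulting doubling constant is genuinely uniform over $U$. The analytic content — Jensen one way, John--Nirenberg the other — is routine once the quantifiers are arranged correctly.
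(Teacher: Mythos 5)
The half of your argument that runs through Jensen's inequality is fine: $\dashint_{B(x,r)}w\geq\exp\bigl(\dashint_{B(x,r)}\log w\bigr)$, and since $B(x,4r)\subset U$ the oscillation bound (\ref{bmo1}) is available on $B(x,2r)$, so the two averages of $\log w$ over $B(x,r)$ and $B(x,2r)$ differ by at most $C(n)D_U^{1/p}$. The genuine gap is in the other half, the claim $\dashint_{B(x,2r)}w\lesssim e^{c}$ with $c=\dashint_{B(x,2r)}\log w$. John--Nirenberg only gives $\dashint_{B}\exp\bigl(\delta|\log w-c|\bigr)\leq C$ for a \emph{small} $\delta\sim D_U^{-1/p}$, whereas writing $w=e^{c}e^{\log w-c}$ and integrating requires the exponent $\delta=1$; when $D_U$ is large this is simply false for general $BMO$ functions (take $w(y)=|y-y_0|^{-N}$ with $N$ large: $\log w\in BMO$ with norm $\sim N$, yet $\dashint_B w/\exp(\dashint_B\log w)$ is unbounded, indeed $w$ need not even be locally integrable). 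So the entire burden of the proof is on \emph{how} the weak reverse H\"older inequality rescues this step, and your phrase ``one uses the reverse H\"older inequality to upgrade to an $L^q$ average'' does not supply a mechanism: (\ref{wrh}) bounds $\bigl(\dashint_{Q}w^q\bigr)^{1/q}$ by $\dashint_{2Q}w$, i.e.\ by an average over the \emph{dilated} ball, so applying it to $Q=B(x,2r)$ reintroduces $\dashint_{B(x,4r)}w$, which is not controlled by the quantities in (\ref{doub}), and applying it to a covering of $B(x,2r)$ by smaller balls just reproduces the unknown average at the same scale. Interpolating between the $L^q$ and a small $L^{s}$ average (the natural way to combine (\ref{wrh}) with small-exponent John--Nirenberg) still leaves an uncontrolled factor $\bigl(\dashint_{B(x,4r)}w\bigr)^{\alpha}$, and there is no room to iterate outward since only $B(x,4r)\subset U$ is assumed. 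In short, the step ``$\log w\in BMO$ plus reverse H\"older $\Rightarrow\dashint_{2B}w\lesssim e^{c}$'' is exactly the nontrivial content of the proposition, and your outline asserts it rather than proves it.

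Note also that the paper does not prove Proposition \ref{locdoub} at all: it refers to \cite{JMV11}, Proposition 2.3, where an argument handling precisely this difficulty is carried out. Your converse direction (doubling plus (\ref{wrh}) $\Rightarrow\log w\in BMO$, via the local $A_\infty$ theory) is standard and unproblematic, and the ``in particular'' clause is indeed immediate once the hard implication is established; but as written the proposal does not close the hard implication.
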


For the proof of this proposition, see \cite{JMV11}, Proposition 2.3.

\section{Proof of the main result}\label{equthmsec}

\subsection{Proof of part (i) of Theorem \ref{mainthm}}

 Suppose that the hypotheses of part (i) from Theorem \ref{mainthm} are in force.  We shall assume henceforth that $\Omega$ is a connected open set.  This assumption is made without loss of generality since we can apply the argument below in each connected component.  The assumption of connectedness is used in a Harnack chain argument.  We begin by constructing an approximating sequence.

\subsection{Construction of an approximating sequence}  Let $(\Omega_j)_j$ be an exhaustion of $\Omega$ by smooth connected domains, in other words $\Omega_j\subset\subset\Omega_{j+1}$ and $\cup_j \Omega_j = \Omega$,  see for example \cite{EE87}.  In addition, fix a ball $B$  so that $8B\subset \Omega_1$.

Let $\varepsilon_j = \min(2^{-j}, \tfrac{1}{2}d(\Omega_j, \partial\Omega_{j+1}))$, and put $\sigma_j = \varphi_{\varepsilon_j}*\sigma$ and $\mathcal{A}_j = \mathcal{A}_{\varepsilon_j}$, with $\varphi_{\varepsilon_j}$ and $\mathcal{A}_{\varepsilon_j}$ as in Lemma \ref{mollem}.  Note that $\mathcal{A}_j$ satisfies (\ref{ellip})--(\ref{monotone}) and (\ref{conv}) in $\Omega_j$.

Applying Lemma \ref{mollem}, it follows that (\ref{tildesigmaest}) holds with $\tilde\sigma = \sigma_{\varepsilon_j}$, $\tilde{\mathcal{A}} = \mathcal{A}_j$ and $V=\Omega_j$. As a result, the hypotheses of Lemma \ref{existencelem} are fulfilled, and we deduce the existence of a sequence $(u_j)_j$  of positive functions satisfying
\begin{equation}\label{approxsol}
\begin{cases}-\text{div}(\mathcal{A}_j(\cdot, \nabla u_j)) = \sigma_j u^{p-1}_j \text{ in }\Omega,\\
\displaystyle \int_{B} u_j^{qp} dx = 1.
\end{cases}\end{equation}
Here $q = \max(p-1,1)$, as before.  In addition, within each $\Omega_j$ the function $u_j$ satisfies the Harnack inequality (of course the implicit constants in these estimates blow up with $j$ and will be only used qualitatively).  Our first task will be to prove a  local gradient estimate for the tail of the sequence $(u_k)_{k>j}$ inside $\Omega_j$.
\begin{prop}\label{locuniform} For a fixed $j\geq 1$, suppose that $B(x,4r) \subset\subset\Omega_j$.  There exists a positive constant $C$, depending on $\Omega_j$, $B$, $B(x, 4r)$, $\Lambda$, $\lambda$, $m$, $M$, $p$ and $n$,  so that the following two estimates hold:
\begin{equation}\label{apriori1}\int_{B(x,r)} |\nabla u_k|^p +|u_k|^p dx \leq C \text{ for all }k>j,\end{equation}
and,
\begin{equation}\label{apriori2}\int_{B(x,r)} |\nabla (u_k^{p-1})|^p + |u_k|^{(p-1)p}dx \leq C \text{ for all }k>j.\end{equation}
\end{prop}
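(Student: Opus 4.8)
The plan is to establish the two gradient estimates by testing the equation \eqref{approxsol} against a judicious family of test functions built from the approximate solution $u_k$ itself, a cutoff, and an exponent chosen to exploit the strict inequality $\lambda < p^{\#}$. The central mechanism — following the linear case in \cite{JMV11} — is to first obtain a \emph{scale-invariant} reverse-type inequality relating integrals of $|\nabla u_k|^p u_k^{-p}$ and $|\nabla u_k|^p$ over concentric balls to smaller-ball averages, then to upgrade this to an honest local bound via the doubling/BMO machinery of Proposition \ref{locdoub}. Concretely, for $B(x,4r)\subset\subset\Omega_j$ and $k>j$, I would test \eqref{approxsol} with $\varphi = u_k^{1-p}\eta^p$ (legitimate since $u_k$ is locally bounded below by the Harnack inequality and Hölder continuous inside $\Omega_j$), where $\eta\in C_0^\infty$ is a standard cutoff adapted to a pair of concentric balls. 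Using the homogeneity \eqref{homog} and ellipticity \eqref{ellip} of $\mathcal{A}_k$, the left side produces a term $\sim (p-1)\int \mathcal{A}_k(\cdot,\nabla u_k)\cdot\nabla u_k\, u_k^{-p}\eta^p$ together with a cross term $\sim \int \mathcal{A}_k(\cdot,\nabla u_k)\cdot\nabla\eta\, u_k^{1-p}\eta^{p-1}$, while the right side becomes $\int \sigma_k \eta^p$. The mollified Sobolev–Poincaré inequality \eqref{smoothemb}, applied to $h=\eta$, controls $\int\sigma_k\eta^p$ by $\lambda\int\mathcal{A}_k(\cdot,\nabla\eta)\cdot\nabla\eta$; the cross term is absorbed using Young's inequality and Minkowski's inequality \eqref{mink} for $\mathcal{A}_k$, at the cost of a factor that forces the restriction $\lambda < p^{\#}$ (this is exactly where $(p-1)^{2-p}$ versus $1$ enters, via the balance between the $(p-1)$ coefficient and the absorption constant, and where convexity — i.e. \eqref{mink} — is used). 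The outcome is an estimate of the form
\begin{equation*}
\int \frac{\mathcal{A}_k(\cdot,\nabla u_k)\cdot\nabla u_k}{u_k^p}\eta^p\, dx \leq C\int \mathcal{A}_k(\cdot,\nabla \eta)\cdot\nabla\eta\, dx,
\end{equation*}
with $C$ depending only on $\Lambda$, $p$ (and $m,M,n$), uniformly in $k$; combined with \eqref{ellip} this gives the Caccioppoli-type bound for $v_k=\log u_k$, namely $\int_{B(x,2r)}|\nabla v_k|^p\,dx \leq C r^{n-p}$, and in particular a uniform local BMO bound \eqref{bmo1} on $v_k=\log u_k$ (uniformly in $k>j$, with constant depending on $\Omega_j$).

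Next I would convert this logarithmic gradient control into the pointwise-averaged estimates \eqref{apriori1}–\eqref{apriori2}. The normalization $\int_B u_k^{qp}\,dx = 1$ with $8B\subset\Omega_1\subset\Omega_j$ provides an anchor. Since $u_k$ satisfies its own equation and $\sigma_k u_k^{p-1}\geq 0$ is not assumed, I cannot directly invoke the classical Harnack inequality with a useful constant; instead, the Caccioppoli estimate for $u_k$ (test \eqref{approxsol} with $u_k\eta^p$, again using \eqref{smoothemb} to bound $\int\sigma_k u_k^p\eta^p$ by $\lambda\int\mathcal{A}_k(\cdot,\nabla(u_k\eta))\cdot\nabla(u_k\eta)$ and absorbing) yields a weak reverse Hölder inequality \eqref{wrh} for $w = u_k^{qp}$ (or for $|\nabla u_k|^p$) with constants independent of $k$ — this is the De Giorgi–Nash–Moser/Gehring step. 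Now Proposition \ref{locdoub} applies: $w=u_k^{qp}$ satisfies \eqref{wrh}, and $\log w = qp\, v_k$ satisfies the uniform BMO bound \eqref{bmo1}, so $w$ is doubling in $\Omega_j$ with a constant depending only on $\Omega_j$ (through the reverse-Hölder and BMO constants), uniformly in $k>j$. Chaining the doubling inequality \eqref{doub} along a Harnack chain of balls connecting $B$ to $B(x,r)$ inside $\Omega_j$ — here connectedness of $\Omega_j$ is essential — transports the normalization $\fint_B u_k^{qp}\sim 1$ to a two-sided bound $c(\Omega_j)\leq \fint_{B(x,2r)} u_k^{qp}\,dx \leq C(\Omega_j)$, uniformly in $k>j$. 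This immediately gives the $|u_k|^p$ and $|u_k|^{(p-1)p}$ terms in \eqref{apriori1}–\eqref{apriori2}; feeding these averaged $L^p$ bounds for $u_k$ (and $u_k^{p-1}$) back into the Caccioppoli inequalities for $u_k$ and for $u_k^{p-1}$ respectively produces the gradient terms $\int_{B(x,r)}|\nabla u_k|^p\,dx$ and $\int_{B(x,r)}|\nabla(u_k^{p-1})|^p\,dx$.

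The main obstacle I anticipate is the first step: extracting the uniform (in $k$) logarithmic estimate with the correct threshold $\lambda<p^{\#}$. The delicacy is twofold — first, justifying that $\varphi=u_k^{1-p}\eta^p$ is an admissible test function in $L_0^{1,p}$ for the mollified equation, which requires the qualitative local lower bound and Hölder regularity from Lemma \ref{existencelem} (and care that all constants used there may blow up in $k$ but are only needed qualitatively); second, the precise bookkeeping in absorbing the cross term, where the convexity/Minkowski inequality \eqref{mink} for $\mathcal{A}_k$ must be invoked in exactly the right place and the sharp constant $(p-1)^{2-p}$ (for $p\geq 2$), respectively $1$ (for $1<p\leq 2$), emerges from optimizing the Young's-inequality split — matching the sharpness recorded in Remark \ref{nonlinregrem}. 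A secondary technical point is that Proposition \ref{locdoub} must be applied with constants tracked uniformly in $k>j$ but allowed to depend on $\Omega_j$, $B$, and $B(x,4r)$; since $\varepsilon_k\to 0$ as $k\to\infty$ the mollified operators $\mathcal{A}_k$ converge to $\mathcal{A}$ with uniform ellipticity constants $m,M$, so no deterioration occurs there. Everything else — the Harnack chain, the Gehring self-improvement giving \eqref{wrh}, and the final back-substitution — is routine once the uniform BMO bound on $\log u_k$ is in hand.
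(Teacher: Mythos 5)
Your overall architecture matches the paper's proof: Caccioppoli-type estimates plus the Sobolev inequality give a weak reverse H\"older inequality for a power of $u_k$, the logarithmic gradient estimate gives a uniform BMO bound on $\log u_k$, Proposition \ref{locdoub} upgrades this to doubling with constants independent of $k$, and a Harnack chain transports the normalization on $B$ to $B(x,2r)$, after which the gradient bounds follow from the Caccioppoli inequalities. However, there is a genuine gap in your account of the key mechanism. Testing \eqref{approxsol} with $u_k^{1-p}\eta^p$ produces the term $-\int_{\Omega_j}\sigma_k\,\eta^p\,dx$ with a \emph{negative} sign (the identity reads $(p-1)\int \mathcal{A}_k(\nabla u_k)\cdot\nabla u_k\,u_k^{-p}\eta^p\,dx \le -\int\sigma_k\eta^p\,dx + p\int \mathcal{A}_k(\nabla u_k)\cdot\nabla\eta\,u_k^{1-p}\eta^{p-1}\,dx$ after using \eqref{ellip}), so it is the lower form bound \eqref{pformlower} with $\Lambda$ --- not \eqref{smoothemb} with $\lambda$ --- that controls it; no smallness of any constant and no convexity are needed there, and the cross term is absorbed into the $(p-1)$ coefficient by Young's inequality alone (this is the paper's Lemma \ref{est3}). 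Your claim that the threshold $\lambda<p^{\#}$ and the Minkowski inequality \eqref{mink} emerge at this step therefore misidentifies where the sharp constant comes from.

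Where $\lambda<(p-1)^{2-p}$ and convexity actually enter is a Caccioppoli inequality for $u_k^{p-1}$, obtained by testing with $u_k^{(p-1)^2}h^p$: the factor $(p-1)^{(p-2)/p}$ multiplying $\lambda^{1/p}$ arises from $\nabla(u_k^{p-1})=(p-1)u_k^{p-2}\nabla u_k$ after applying \eqref{mink} (the paper's Lemma \ref{est2}). This estimate is indispensable in your own scheme: for $p\ge 2$ one has $q=p-1$, so the weak reverse H\"older inequality \eqref{wrh} must hold for $w=u_k^{(p-1)p}$ in order to exploit the normalization $\int_B u_k^{qp}\,dx=1$ and to reach \eqref{apriori2}, whereas testing with $u_k\eta^p$ (your only stated Caccioppoli step, which requires $\lambda<1$ and is the paper's Lemma \ref{est1}) yields reverse H\"older only for $u_k^{p}$. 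You never produce the $u_k^{p-1}$ estimate nor identify its test function. Moreover, for $1<p<2$ your plan to ``feed back into the Caccioppoli inequality for $u_k^{p-1}$'' is unavailable: there $(p-1)^{2-p}<1$, so the hypothesis $\lambda<1$ does not furnish that lemma; the paper instead obtains the gradient term in \eqref{apriori2} by splitting $B(x,r)$ into $\{u_k\ge 1\}$ and $\{u_k\le 1\}$ and interpolating between the Caccioppoli estimate for $u_k$ and the logarithmic estimate of Lemma \ref{est3}. Since these missing pieces are exactly where the sharp constant $p^{\#}$ does its work, the gap is substantive rather than cosmetic, even though the surrounding strategy (reverse H\"older plus BMO, doubling, Harnack chain, back-substitution) is the right one.
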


The key thing to note from Proposition \ref{locuniform} is this: For each fixed $j$, the estimates (\ref{apriori1}) and (\ref{apriori2}) are \textit{independent of $k$ for $k>j$}.

\subsection{Caccioppoli estimates on the approximating sequence.}  Let us fix $j\geq 1$ as in Proposition \ref{locuniform}.  In order to prove Proposition \ref{locuniform}, we work with three Caccioppoli  estimates.   In each estimate, we will make use of only one of the assumptions on $\sigma$, and so we make this explicit in the statement of the relevant lemma.  We will often suppress the dependence on $x$ in $\mathcal{A}_k$ and write $\mathcal{A}_k (\xi)$ instead of  $\mathcal{A}_k(\, \cdot ,\xi)$.
\begin{lem}\label{est1}  Suppose that (\ref{pformupper}) holds for $0<\lambda<1$.  There exists a constant $C=C(\lambda)>0$, such that for each $k>j$,
\begin{equation}\label{cacc1}
\int_{\Omega_j} |\nabla u_k |^p |h|^p dx \leq C\int_{\Omega_j} u_k^p |\nabla h|^p dx, \text{ for all }h\in C^{\infty}_0(\Omega_j).
\end{equation}
\end{lem}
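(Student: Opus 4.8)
The natural test function for a Caccioppoli-type estimate on a solution of $-\operatorname{div}(\mathcal{A}_k(\cdot,\nabla u_k)) = \sigma_k u_k^{p-1}$ is $\varphi = u_k |h|^p$, where $h \in C^\infty_0(\Omega_j)$ and $u_k$ is positive and (locally in $\Omega_j$, by $C^\alpha_{\mathrm{loc}} \cap W^{1,p}$ regularity from Lemma \ref{existencelem}) a legitimate multiplier. Plugging this in,
\[
\int_{\Omega_j} \mathcal{A}_k(\nabla u_k)\cdot \nabla(u_k |h|^p)\,dx = \int_{\Omega_j} \sigma_k u_k^p |h|^p\,dx.
\]
Expanding $\nabla(u_k|h|^p) = |h|^p \nabla u_k + p u_k |h|^{p-2} h \nabla h$ and using homogeneity (\ref{homog}) to write $\mathcal{A}_k(\nabla u_k)\cdot\nabla u_k = |\nabla u_k|^{p}\,(\mathcal A_k(\nabla u_k/|\nabla u_k|)\cdot(\nabla u_k/|\nabla u_k|))$ — more usefully, just keeping $\mathcal{A}_k(\nabla u_k)\cdot\nabla u_k \geq m|\nabla u_k|^p$ from (\ref{ellip}) — gives a main term $\int |h|^p \mathcal{A}_k(\nabla u_k)\cdot\nabla u_k\,dx$ on the left, a cross term bounded via (\ref{ellip}) and Young's inequality by $\varepsilon\int|h|^p|\nabla u_k|^p + C_\varepsilon \int u_k^p|\nabla h|^p$, and the right-hand side which I must control using the upper form bound (\ref{pformupper}). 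The key point is that on $\Omega_j$ the mollified potential $\sigma_k$ satisfies the mollified inequality (\ref{smoothemb}) of Lemma \ref{mollem}: for all $g \in L^{1,p}_0(\Omega_j)$ (taking $k$ large enough that $\varepsilon_k < \operatorname{dist}(\Omega_j,\partial\Omega_{j+1})$, which holds for all $k > j$), $\int_{\Omega_j}|g|^p\,d\sigma_k \leq \lambda \int_{\Omega_j}\mathcal{A}_k(\nabla g)\cdot\nabla g\,dx$.

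\textbf{Applying the form bound and absorbing.} Apply (\ref{smoothemb}) with $g = u_k |h|^{p}$... but $|g|^p = u_k^p|h|^{p^2}$, which is not what appears on the right-hand side. The correct choice is $g$ with $|g|^p = u_k^p|h|^p$, i.e.\ $g = u_k |h|$ (a valid element of $L^{1,p}_0(\Omega_j)$ since $u_k$ is bounded on compact subsets and $h$ has compact support). Then $\int_{\Omega_j}\sigma_k u_k^p|h|^p\,dx \leq \lambda \int_{\Omega_j}\mathcal{A}_k(\nabla(u_k|h|))\cdot\nabla(u_k|h|)\,dx$. Now expand $\nabla(u_k|h|) = |h|\nabla u_k + u_k \nabla|h|$ and invoke Minkowski's inequality (\ref{mink}) for $\mathcal{A}_k$ (valid on $\Omega_j$, granted by convexity plus homogeneity as noted after (\ref{mink})):
\[
\Bigl(\int_{\Omega_j}\mathcal{A}_k(\nabla(u_k|h|))\cdot\nabla(u_k|h|)\,dx\Bigr)^{1/p} \leq \Bigl(\int_{\Omega_j}\mathcal{A}_k(|h|\nabla u_k)\cdot(|h|\nabla u_k)\,dx\Bigr)^{1/p} + \Bigl(\int_{\Omega_j}\mathcal{A}_k(u_k\nabla|h|)\cdot(u_k\nabla|h|)\,dx\Bigr)^{1/p}.
\]
By homogeneity the first term is $\bigl(\int|h|^p\mathcal{A}_k(\nabla u_k)\cdot\nabla u_k\,dx\bigr)^{1/p}$ and the second is $\bigl(\int u_k^p\mathcal{A}_k(\nabla|h|)\cdot\nabla|h|\,dx\bigr)^{1/p} \leq M^{1/p}\bigl(\int u_k^p|\nabla h|^p\,dx\bigr)^{1/p}$. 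Denote $I := \int_{\Omega_j}|h|^p\mathcal{A}_k(\nabla u_k)\cdot\nabla u_k\,dx \geq m\int_{\Omega_j}|h|^p|\nabla u_k|^p\,dx$ and $J := \int_{\Omega_j}u_k^p|\nabla h|^p\,dx$. Combining the test-function identity, the cross-term estimate, and the displayed chain, I get an inequality of the shape
\[
I \;\le\; \varepsilon I + C_\varepsilon J \;+\; \lambda\bigl(I^{1/p} + M^{1/p}J^{1/p}\bigr)^p.
\]

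\textbf{The main obstacle.} The crux is the algebraic step of absorbing $I$ given $\lambda < 1$ (together with the freedom to take $\varepsilon$ small). Expanding $(I^{1/p}+M^{1/p}J^{1/p})^p$: for $p \geq 2$ one cannot simply bound it by $2^{p-1}(I + M J)$ because $2^{p-1}\lambda$ need not be $<1$; instead I use that for any $\delta>0$, $(a+b)^p \leq (1+\delta)a^p + C(\delta,p)b^p$, giving $\lambda(I^{1/p}+M^{1/p}J^{1/p})^p \leq \lambda(1+\delta)I + C(\delta,p)\lambda M J$. Choosing $\delta$ and $\varepsilon$ small enough that $\lambda(1+\delta)+\varepsilon < 1$ — possible precisely because $\lambda<1$ — lets me absorb: $(1-\varepsilon-\lambda(1+\delta))I \leq C(\varepsilon,\delta,p,M,\lambda)J$. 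Since $m\int_{\Omega_j}|h|^p|\nabla u_k|^p\,dx \leq I$ and $J = \int_{\Omega_j}u_k^p|\nabla h|^p\,dx$, this is exactly (\ref{cacc1}) with $C = C(\lambda)$ as claimed, and crucially the constant does not depend on $k$. The only subtlety I should double-check is the admissibility of the test functions $u_k|h|^p$ and $u_k|h|$ in the respective weak formulations and in (\ref{smoothemb}): this follows because $u_k \in C^\alpha_{\mathrm{loc}}(\Omega_j)\cap W^{1,p}_{\mathrm{loc}}(\Omega_j)$ is locally bounded away from $\infty$ (and, by Harnack, away from $0$, though positivity is not needed here) on the compact support of $h$, so products with compactly supported $h$ lie in $L^{1,p}_0(\Omega_j)$, and a standard approximation argument extends the identity/inequality from smooth test functions. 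I also note the small regularity issue that $|h|$ is only Lipschitz, not smooth, but it lies in $L^{1,p}_0$ and $\nabla|h| = \operatorname{sgn}(h)\nabla h$ a.e., so $|\nabla|h||=|\nabla h|$ and everything goes through.
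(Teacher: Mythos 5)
Your proof is correct and follows essentially the same route as the paper: test with $u_k|h|^p$, control the potential term via the mollified form bound of Lemma \ref{mollem} applied to $u_k|h|$, split with the Minkowski inequality (\ref{mink}) coming from convexity and homogeneity, handle the cross term by Young's inequality, and absorb using $\lambda<1$. The only (cosmetic) difference is that the paper performs the absorption after taking $p$-th roots, choosing $\varepsilon<(1-\lambda^{1/p})^p$, whereas you stay at the level of $p$-th powers and use $(a+b)^p\leq(1+\delta)a^p+C(\delta,p)b^p$; both are equivalent.
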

\begin{proof}  Fix $j$ and $k>j$ as in the statement of the lemma, and let $v=u_k$.  With $h \in C^{\infty}_0(\Omega_j)$, $h\geq 0$, testing the weak formulation of (\ref{approxsol}) with $v h^p \in L^{1,p}_0(\Omega _j)$ yields
$$ \int_{\Omega_j}  \mathcal{A}_k(\nabla v) \cdot \nabla (v h^p)\, dx =  \int_{\Omega_j} h^p v^p d\sigma_k,$$
and hence,
\begin{equation}\label{cacc1step1}\begin{split}\nonumber
&\int_{\Omega_j}  \mathcal{A}_k(\nabla v)\cdot (\nabla v ) h^p \, dx = \int_{\Omega_j} h^p v^pd\sigma_k -\int_{\Omega_j} v    \mathcal{A}_k(\nabla v)\cdot \nabla (h^p) \, dx.
\end{split}\end{equation}
Applying Lemma \ref{mollem} and crudely employing (\ref{ellip}), we dominate the right hand side of this equality by
$$\lambda \int_{\Omega_j} \mathcal{A}_k(\nabla(hv))\!\cdot\!\nabla (h v)\, dx +pM\!\int_{\Omega_j} v \, |h|^{p-1}   |\nabla v|^{p-1} |\nabla h| dx.
$$
Recall Young's inequality with $\varepsilon$: for any $\varepsilon>0$, and $a,b\geq 0$,
\begin{equation}\label{young}ab \leq \varepsilon a^p + (p\varepsilon)^{-1/(p-1)}\frac{(p-1)}{p} b^{p'}.\end{equation}
It follows from (\ref{young}) that for any $\varepsilon>0$ there exists a constant $C(\varepsilon)$, depending on $\varepsilon$, $m$, $M$ and $p$, such that
$$pM\int_{\Omega_j} v  |h|^{p-1}  \! |\nabla v|^{p-1} \! |\nabla h| dx\leq \varepsilon m \int_{\Omega_j}|\nabla v|^p h^p \, dx + C(\varepsilon)\int_{\Omega_j} v^p |\nabla h|^p \, dx.
$$
Applying (\ref{ellip}) in the first term in the right hand side of this inequality, and bringing our estimates together, we obtain
\begin{equation}\label{afterellip}\begin{split}\int_{\Omega_j}  \mathcal{A}_k(\nabla v)&\cdot (\nabla v ) h^p \, dx \leq  \lambda\int_{\Omega_j} \mathcal{A}_k(\nabla(hv))\!\cdot\!\nabla (h v)\, dx \\
&+\varepsilon \int_{\Omega_j}[ \mathcal{A}_k(\nabla v)\cdot(\nabla v)] h^p \, dx + C(\varepsilon)\int_{\Omega_j} v^p |\nabla h|^p \, dx.
\end{split}\end{equation}
Now we raise both sides of (\ref{afterellip}) to the power $1/p$ and appeal to the elementary inequality
\begin{equation}\label{elemabineq}(a+b)^{1/p}\leq a^{1/p}+ b^{1/p} \text{ for } a,b>0.
\end{equation}After applying the Minkowski inequality (\ref{mink}), we arrive at
\begin{equation}\begin{split}\nonumber\Bigl(\int_{\Omega_j}& [\mathcal{A}_k(\nabla v)\cdot \nabla v] h^p \, dx\Bigl)^{1/p} \leq  (\lambda^{1/p}\! +\! \varepsilon^{1/p})\Bigl(\int_{\Omega_j}[ \mathcal{A}_k(\nabla v)\cdot\nabla v] h^p \, dx\Bigl)^{1/p}\\
&+ \lambda^{1/p}\Bigl(\int_{\Omega_j}[ \mathcal{A}_k(\nabla h)\cdot\nabla h] v^p \, dx\Bigl)^{1/p}+ \Bigl(C(\varepsilon)\int_{\Omega_j} v^p |\nabla h|^p \, dx\Bigl)^{1/p}.
\end{split}\end{equation}
 Choosing $\varepsilon <(1-\lambda^{1/p})^p$ and rearranging, we conclude that
$$\int_{\Omega_j} [\mathcal{A}_k(\nabla u_k)\cdot \nabla u_k] h^p dx \leq C(\lambda)\int_{\Omega_j} u_k^p |\nabla h|^p dx.
$$
Appealing to (\ref{ellip}), we obtain (\ref{cacc1}).
\end{proof}

\begin{lem}\label{est3}  Suppose that (\ref{pformlower}) holds for some $\Lambda>0$.  There exists a constant $C=C(\Lambda)>0$, such that for all $k>j$, one has
\begin{equation}\label{cacc3}
\int_{\Omega_j} \frac{|\nabla u_k |^p}{u_k^p} |h|^p dx \leq C\int_{\Omega_j} |\nabla h|^p dx, \text{ for all }h\in C^{\infty}_0(\Omega_j).
\end{equation}
\end{lem}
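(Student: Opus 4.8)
The plan is to test the weak formulation of the approximate equation \eqref{approxsol} with a carefully chosen logarithmic test function, in order to bring the quantity $|\nabla u_k|^p/u_k^p$ into play, and then use the lower bound \eqref{pformlower} (via Lemma \ref{mollem}) to absorb the term involving $\sigma_k$. Fix $j$ and $k>j$ and write $v=u_k$; recall $v$ is positive and satisfies the Harnack inequality in $\Omega_j$, so $\log v$ is locally bounded and $v^{-1}h^p$ is an admissible test function in $L^{1,p}_0(\Omega_j)$ for $h\in C^\infty_0(\Omega_j)$, $h\geq 0$. First I would test \eqref{approxsol} with $\varphi = v^{1-p}h^p$ (or, more transparently, observe that $w=\log v$ satisfies $-\mathrm{div}(\mathcal{A}_k(\nabla v))\cdot$ something); the natural substitution is to note that $\nabla(v^{1-p}) = (1-p)v^{-p}\nabla v$, so that testing produces on the left-hand side a term
$$
(p-1)\int_{\Omega_j}\frac{\mathcal{A}_k(\nabla v)\cdot\nabla v}{v^p}\,h^p\,dx
$$
together with a cross term $\int_{\Omega_j} v^{1-p}\mathcal{A}_k(\nabla v)\cdot\nabla(h^p)\,dx$, while the right-hand side becomes $\int_{\Omega_j} v^{1-p}\cdot v^{p-1}h^p\,d\sigma_k = \int_{\Omega_j} h^p\,d\sigma_k$.

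Now the key point: the right-hand side $\int_{\Omega_j} h^p\,d\sigma_k$ is controlled by the \emph{lower} bound. Applying Lemma \ref{mollem} to the measure $-\sigma$ (which satisfies \eqref{pformlower}), we get $-\int_{\Omega_j} h^p\,d\sigma_k \leq \Lambda\int_{\Omega_j}\mathcal{A}_k(\nabla h)\cdot\nabla h\,dx$, hence $\int_{\Omega_j} h^p\,d\sigma_k \geq -\Lambda\int\mathcal{A}_k(\nabla h)\cdot\nabla h\,dx$. Since in the identity above the $\sigma_k$-term appears with a sign that, after moving it to one side, can be bounded below, we obtain
$$
(p-1)\int_{\Omega_j}\frac{\mathcal{A}_k(\nabla v)\cdot\nabla v}{v^p}h^p\,dx \leq \Lambda\int_{\Omega_j}\mathcal{A}_k(\nabla h)\cdot\nabla h\,dx + \Bigl|\,p\int_{\Omega_j} v^{1-p}h^{p-1}\mathcal{A}_k(\nabla v)\cdot\nabla h\,dx\,\Bigr|.
$$
The cross term is estimated by the boundedness bound $|\mathcal{A}_k(\xi)|\leq M|\xi|^{p-1}$ and Young's inequality \eqref{young} with a small parameter $\varepsilon$: one writes $v^{1-p}h^{p-1}|\nabla v|^{p-1}|\nabla h| = \bigl(v^{-1}|\nabla v|\,h\bigr)^{p-1}|\nabla h|$ and splits as $\varepsilon(v^{-1}|\nabla v|h)^p + C(\varepsilon)|\nabla h|^p$. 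The first piece is absorbed into the left-hand side using ellipticity $\mathcal{A}_k(\nabla v)\cdot\nabla v\geq m|\nabla v|^p$ (choosing $\varepsilon$ small depending only on $m,M,p$), while the second piece, together with the $\Lambda$-term and the bound $\mathcal{A}_k(\nabla h)\cdot\nabla h\leq M|\nabla h|^p$, gives $C(\Lambda)\int_{\Omega_j}|\nabla h|^p\,dx$. Finally, applying $\mathcal{A}_k(\nabla v)\cdot\nabla v\geq m|\nabla v|^p$ once more on the left yields \eqref{cacc3}; one then extends from $h\geq 0$ to general $h\in C^\infty_0(\Omega_j)$ by replacing $h$ with $|h|$.

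The main obstacle I anticipate is the justification that $\varphi=v^{1-p}h^p$ is a legitimate test function in $L^{1,p}_0(\Omega_j)$ — this is where positivity of $v$ and the Harnack inequality are essential, guaranteeing that $v$ is locally bounded above and below away from zero on $\mathrm{supp}\,h$, so that $v^{1-p}$ and $\nabla(v^{1-p})$ are controlled and $\varphi$ lies in the energy space. (This is precisely why the lemma is stated for the $u_k$ rather than for an abstract solution.) A minor secondary point is keeping track of the correct placement of the factor $(p-1)$ and ensuring $p-1>0$, which holds since $p>1$; the constant $C(\Lambda)$ in the conclusion indeed depends only on $\Lambda$, $p$, $m$, $M$ as claimed, with no smallness requirement on $\Lambda$.
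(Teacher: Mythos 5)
Your proposal is correct and follows essentially the same route as the paper: test the approximate equation with $h^p u_k^{1-p}$ (admissible by Harnack), control $-\int h^p\,d\sigma_k$ via the mollified lower bound (\ref{pformlower}) through Lemma \ref{mollem}, and absorb the cross term with Young's inequality and ellipticity. The only blemish is the passing typo ``$v^{-1}h^p$'' for the test function, which you correct to $v^{1-p}h^p$ in the actual argument.
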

\begin{proof}
Let $h\in C^{\infty}_0(\Omega_j)$, with $h\geq 0$.  Since $u_j$ satisfies the Harnack inequality in $\Omega_j$, there exists a constant $c>0$ so that $u_k >c$ on the support of $h$.  Thus $h^p u_k^{1-p}\in \text{L}^{1,p}_0(\Omega_j)$ is a valid test function, and hence
\begin{equation}\label{gradlogest2}-\int_{\Omega_j}  \mathcal{A}_k(\nabla u_k) \cdot \nabla \Bigl(\frac{h^p}{u_k^{p-1}}\Bigl) dx = -\int_{\Omega_j} h^pd\sigma_k.
\end{equation}
On the other hand, by differentiating and applying (\ref{ellip}) we see that
\begin{equation}\begin{split}\label{gradlogest6}(p-1)\!\int_{\Omega_j} \frac{\mathcal{A}_k(\nabla u_k)\cdot\nabla u_k}{u_k^p} &h^p dx \leq  -\int_{\Omega_j}  \mathcal{A}_k(\nabla u_k)\!\cdot \!\nabla \Bigl(\frac{h^p}{u_k^{p-1}}\Bigl) dx\\
& + Mp\int_{\Omega_j} \frac{|\nabla u_k|^{p-1}}{u_k^{p-1}} |\nabla h| h^{p-1} dx.
\end{split}\end{equation}
The second term on the right may be estimated using Young's inequality and (\ref{ellip}): for each $\varepsilon>0$, there exists $C(\varepsilon)>0$ such that
\begin{equation}\begin{split}\label{gradlogest7}p\!\int_{\Omega_j} \frac{|\nabla u_k|^{p-1}}{u_k^{p-1}} |\nabla h| h^{p-1} dx \leq &\varepsilon \int_{\Omega_j} \frac{\mathcal{A}_k(\nabla u_k)\cdot\nabla u_k}{u_k^p}h^p dx \\
&+ C(\varepsilon)\!\int_{\Omega_j} |\nabla h|^p dx.
\end{split}\end{equation}
Applying (\ref{gradlogest2}) and (\ref{gradlogest7}) into (\ref{gradlogest6}), we estimate
\begin{equation}\label{gradlogest3}\nonumber(p-1-\varepsilon)\int_{\Omega_j} \frac{\mathcal{A}_k(\nabla u_k)\cdot\nabla u_k}{u_k^p} h^p dx \leq  -\int_{\Omega_j} h^pd\sigma_k + C(\varepsilon)\int_{\Omega_j} |\nabla h|^p dx.
\end{equation}
To bound the first term on the right hand side of this inequality, note that combining Lemma \ref{mollem} with the lower form bound (\ref{pformlower}) yields
\begin{equation}\label{molldiscacc3}
-\int_{\Omega_j}h^pd\sigma_k  \leq \Lambda\int_{\Omega_j} \mathcal{A}_{k}(\nabla h)\cdot\nabla h dx \leq M\Lambda \int_{\Omega_j} |\nabla h|^p dx.
\end{equation}
Substituting (\ref{molldiscacc3}) into the penultimate inequality, we deduce (\ref{cacc3}) from (\ref{ellip}).
\end{proof}

The third lemma will only be used in the case $p\geq2$, but is valid for all $1<p<\infty$.

\begin{lem}\label{est2}  Suppose that (\ref{pformupper}) holds with $0<\lambda<(p-1)^{2-p}$.  There exists a constant $C=C(\lambda )>0$, such that for all $k>j$,
\begin{equation}\label{cacc2}
\int_{\Omega_j} |\nabla (u_k)^{p-1} |^p |h|^p dx \leq C\int_{\Omega_j} |(u_k)^{p-1}|^p |\nabla h|^p dx, \text{ for all } h\in C^{\infty}_0(\Omega_j).
\end{equation}
\end{lem}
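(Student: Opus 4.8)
The plan is to mimic the proof of Lemma~\ref{est1}, but now testing the approximate equation~(\ref{approxsol}) with a power of $u_k$ adapted to make $u_k^{p-1}$, rather than $u_k$, the natural quantity appearing in the Caccioppoli inequality. Write $v = u_k$ and recall that by the Harnack inequality $v$ is bounded below by a positive constant on the support of any $h \in C^\infty_0(\Omega_j)$, so all the test functions below are legitimate elements of $L^{1,p}_0(\Omega_j)$. First I would test the weak formulation of $-\mathrm{div}(\mathcal{A}_k(\nabla v)) = \sigma_k v^{p-1}$ with $\varphi = v^{(p-1)^2/(p-1)} h^p = v^{p-1} h^p$ — more precisely, the exponent should be chosen so that the left-hand side produces the term $\int |\nabla(v^{p-1})|^p h^p$ up to constants; since $\nabla(v^{p-1}) = (p-1) v^{p-2}\nabla v$, testing with $v^{(p-1)p - (p-1)}h^p = v^{(p-1)(p-1)}h^p$ and using homogeneity~(\ref{homog}) of $\mathcal{A}_k$ gives, on the left,
$$
(p-1)(p-1) \int_{\Omega_j} v^{(p-1)(p-1)-1}\,\mathcal{A}_k(\nabla v)\cdot \nabla v\, h^p\,dx = \frac{(p-1)(p-1)}{(p-1)^p}\int_{\Omega_j}\mathcal{A}_k\big(\nabla(v^{p-1})\big)\cdot\nabla(v^{p-1})\, h^p\,dx,
$$
plus a cross term $\int v^{(p-1)(p-1)}\mathcal{A}_k(\nabla v)\cdot \nabla(h^p)\,dx$, while the right-hand side becomes $\int v^{(p-1)p} h^p\, d\sigma_k = \int |v^{p-1} h|^p\, d\sigma_k$.

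Next I would apply Lemma~\ref{mollem} (the mollified form of the upper bound~(\ref{pformupper})) to the right-hand side, bounding $\int |v^{p-1}h|^p d\sigma_k \le \lambda \int \mathcal{A}_k(\nabla(v^{p-1}h))\cdot \nabla(v^{p-1}h)\,dx$. Then, as in Lemma~\ref{est1}, I would estimate the cross term $\int v^{(p-1)(p-1)}\mathcal{A}_k(\nabla v)\cdot\nabla(h^p)\,dx$ by Young's inequality~(\ref{young}) and~(\ref{ellip}), absorbing a small multiple of $\int |\nabla(v^{p-1})|^p h^p\,dx$ and leaving a remainder controlled by $\int |v^{p-1}|^p |\nabla h|^p\,dx$. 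Raising the resulting inequality to the power $1/p$, using~(\ref{elemabineq}) and the Minkowski inequality~(\ref{mink}) for $\mathcal{A}_k$ to split $\big(\int \mathcal{A}_k(\nabla(v^{p-1}h))\cdot\nabla(v^{p-1}h)\big)^{1/p}$ into a piece with $\nabla(v^{p-1})$ and a piece with $\nabla h$, I obtain an inequality of the form
$$
I^{1/p} \le \Big(\big(\tfrac{(p-1)^p}{(p-1)(p-1)}\lambda\big)^{1/p} + \varepsilon^{1/p}\Big) I^{1/p} + (\text{terms in } \textstyle\int |v^{p-1}|^p|\nabla h|^p),
$$
where $I = \int_{\Omega_j}\mathcal{A}_k(\nabla(v^{p-1}))\cdot\nabla(v^{p-1})\, h^p\,dx$. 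The coefficient $\tfrac{(p-1)^p}{(p-1)(p-1)} = (p-1)^{p-2}$, so the absorption step succeeds precisely when $(p-1)^{p-2}\lambda < 1$, i.e. $\lambda < (p-1)^{2-p}$ — which is exactly the hypothesis. Choosing $\varepsilon$ small enough and rearranging, then invoking the ellipticity lower bound in~(\ref{ellip}) to pass from $\mathcal{A}_k(\nabla(v^{p-1}))\cdot\nabla(v^{p-1})$ back to $|\nabla(v^{p-1})|^p$, yields~(\ref{cacc2}).

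The main obstacle I anticipate is bookkeeping the exponents so that the homogeneity identity~(\ref{homog}) produces exactly $\mathcal{A}_k(\nabla(v^{p-1}))\cdot\nabla(v^{p-1})$ with the sharp constant $(p-1)^{p-2}$: getting a worse constant here would force a smaller range of $\lambda$ and destroy the sharpness claimed in Remark~\ref{nonlinregrem}. A secondary technical point is justifying the chain rule computations and the use of $v^{(p-1)(p-1)}h^p$ as a test function in $L^{1,p}_0(\Omega_j)$; this is where the Harnack lower bound on $v$ (and the a priori membership $v \in C^\alpha_{\mathrm{loc}}\cap W^{1,p}$ from Lemma~\ref{existencelem}) is used, exactly as in Lemma~\ref{est3}. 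One should also note, as the statement already flags, that for $1 < p \le 2$ the condition $\lambda < (p-1)^{2-p}$ is weaker than $\lambda < 1 = p^{\#}$, so this lemma is only invoked when $p \ge 2$, but the computation above is valid for all $p > 1$ since every step (Young, Minkowski via convexity~(\ref{conv}), homogeneity) holds in that generality.
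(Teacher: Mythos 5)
Your proposal is correct and follows essentially the same route as the paper: testing with $v^{(p-1)^2}h^p$, applying Lemma \ref{mollem} to the resulting term $\int v^{p(p-1)}h^p\,d\sigma_k$, splitting via the Minkowski inequality (\ref{mink}), using homogeneity to convert $\mathcal{A}_k(\nabla(v^{p-1}))\cdot\nabla(v^{p-1})$ into $(p-1)^p[\mathcal{A}_k(\nabla v)\cdot\nabla v]v^{p(p-2)}$, and absorbing under the sharp condition $\lambda(p-1)^{p-2}<1$ after a Young-inequality estimate of the cross term. Your exponent bookkeeping and the identification of the critical constant $(p-1)^{p-2}$ match the paper's computation exactly.
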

\begin{proof}  Fix $k\geq j$ and $h\in C^{\infty}_0(\Omega_j)$.   Let $v = u_k$, and note that
\begin{equation}\begin{split}\label{3rdcacctest}\int_{\Omega}  [\mathcal{A}_k(\nabla v)&\cdot\nabla v] v^{(p-2)p}h^p dx  = \frac{1}{(p-1)^2}\int_{\Omega}\mathcal{A}_k(\nabla v)\cdot \nabla \bigl(v^{(p-1)^2}h^p\bigl)dx\\
&\qquad -\frac{p}{(p-1)^2}\int_{\Omega} [\mathcal{A}_k(\nabla v)\cdot \nabla h ]v^{(p-1)^2}h^{p-1} dx.
\end{split}\end{equation}
Using the properties of $v$ (see Lemma \ref{existencelem}), $v^{(p-1)^2}h^p$ is a valid test function for all $p>1$, and hence
\begin{equation}\label{vp1testest}\begin{split} \int_{\Omega}\mathcal{A}_k(\nabla v)&\cdot \nabla \Bigl(v^{(p-1)^2}h^p\Bigl)dx  = \int_{\Omega} v^{p(p-1)}h^p d\sigma_k \\
&\leq \lambda\int_{\Omega} \mathcal{A}_k(\nabla (v^{p-1}h))\cdot\nabla(v^{p-1}h) dx,
\end{split}\end{equation}
where Lemma \ref{mollem} has been applied in the second inequality.  The Minkowski inequality implies that
\begin{equation}\begin{split}\nonumber\Bigl(\int_{\Omega} \mathcal{A}_k(\nabla (v^{p-1}h))\cdot\nabla(v^{p-1}h) dx\Bigl)^{1/p}&\leq  \Bigl(\int_{\Omega} [\mathcal{A}_k(\nabla  v^{p-1} )\cdot\nabla v^{p-1} ] h^pdx\Bigl)^{1/p}\\
&+ \Bigl(\int_{\Omega} [\mathcal{A}_k(\nabla h)\cdot\nabla h ] v^{p(p-1)}dx\Bigl)^{1/p}.
\end{split}\end{equation}
Note that $[\mathcal{A}_k(\nabla v^{p-1})\cdot\nabla v^{p-1} ]h^p = (p-1)^p [\mathcal{A}_k(\nabla v)\cdot\nabla v ]v^{p(p-2)}h^p$.  Bringing our estimates together, making use of the boundedness of $\mathcal{A}$ from (\ref{ellip}),  we see that \begin{equation}\begin{split}\label{vp1testest2} \Bigl(\int_{\Omega} &(\mathcal{A}_k(\nabla v)\cdot\nabla v )v^{(p-2)p}h^p dx\Bigl)^{1/p} \\
&\leq \lambda^{(1/p)}(p-1)^{\tfrac{p-2}{p}}\Bigl(\int_{\Omega}(\mathcal{A}_k(\nabla v)\cdot\nabla v )v^{(p-2)p} h^p dx\Bigl)^{1/p} \\
&+ \lambda^{1/p}\Bigl(CM\int_{\Omega} v^{p(p-1)}|\nabla h|^p dx\Bigl)^{1/p}\\
& + \Bigl(CM \int_{\Omega} |\nabla v|^{p-1}v^{(p-2)p+1}|\nabla h| h^{p-1} dx\Bigl)^{1/p}.
\end{split}\end{equation}
The third term in the right hand side of (\ref{vp1testest2}) is handled with Young's inequality: for any $\varepsilon>0$, there exists $C(\varepsilon)$ such that
\begin{equation}\begin{split}\nonumber\label{vp1testest3} \Bigl( &CM \int_{\Omega} |\nabla v|^{p-1} v^{(p-2)p+1}|\nabla h| h^{p-1} dx\Bigl)^{1/p}\\
&\leq \varepsilon \Bigl(\int_{\Omega_j}(\mathcal{A}_k(\nabla v)\cdot \nabla v)v^{(p-2)p}h^p dx\Bigl)^{1/p} +C(\varepsilon)\Bigl(\int_{\Omega_j}v^{p(p-1)}|\nabla h|^p dx\Bigl)^{1/p}.
\end{split}\end{equation}
Here (\ref{ellip}) has also been used (as in (\ref{afterellip})).  By assumption on $\lambda$, we have $\lambda^{(1/p)}(p-1)^{1-2/p}<1$. Choose $\varepsilon>0$ so that $\varepsilon<1- \lambda^{(1/p)}(p-1)^{1-2/p}$.  Applying the previous estimate into (\ref{vp1testest2}) and rearranging, we conclude that
$$\Bigl(\int_{\Omega} [\mathcal{A}_k(\nabla v)\cdot\nabla v ]v^{(p-2)p}h^p dx\Bigl)^{1/p}\leq C(\lambda) \Bigl(\int_{\Omega} v^{p(p-1)}|\nabla h|^p dx\Bigl)^{1/p}.
$$
Appealing to (\ref{ellip}) once again, the lemma is proved.
\end{proof}

\subsection{A uniform gradient estimate: the proof of Proposition \ref{locuniform}}  Having established the required Caccioppoli inequalities, we move onto proving Proposition \ref{locuniform}.
\begin{proof}[The proof of Proposition \ref{locuniform}]  Fix $k>j$, and let $v = u_k^q$ with $q = \max (p-1,1)$.  To prove (\ref{apriori1}) and (\ref{apriori2}), we will employ Proposition \ref{locdoub} in $U = \Omega_j$ to show that $v^p$ is doubling in $\Omega_j$, with constants independent of $k$.  To verify the hypothesis of Proposition \ref{locdoub}, we first show that $v^p$ satisfies a weak reverse H\"{o}lder inequality, i.e. that (\ref{wrh}) holds in $\Omega_j$.  To this end, let us fix $B(z,2s)\subset\subset \Omega_j$.

First suppose $1<p<n$.  For any $\psi \in C^{\infty}_{0}(\Omega_j)$, an application of Sobolev's inequality yields
\begin{equation}\label{sobineq}\begin{split}
\Bigl(\int_{\Omega_j}v^{\frac{pn}{n-p}} |\psi|^{\frac{pn}{n-p}}dx\Bigl)^{\frac{n-p}{pn}}\leq C& \Bigl(\int_{\Omega_j} |\nabla v|^p|\psi|^p \, dx\Bigl)^{1/p}\\&+ C\Big(\int_{\Omega_j} v^p \, |\nabla\psi|^p \, dx\Bigl)^{1/p}.
\end{split}\end{equation}
Applying Lemma \ref{est1} (if $p\leq2$) or Lemma \ref{est2} (if $p\geq 2$) in the first term on the right hand side of (\ref{sobineq}), we deduce that
\begin{equation}\label{caccsob}\Bigl(\int_{\Omega_j}v^{\frac{pn}{n-p}} |\psi|^{\frac{pn}{n-p}}dx\Bigl)^{\frac{n-p}{n}} \leq C(\lambda) \int_{\Omega_j}v^{p}|\nabla \psi|^p dx.
\end{equation}
Specialising (\ref{caccsob}) to the case when $\psi\in C^{\infty}_0(B(z, 2s))$, with $\psi\equiv1 $ in $B(z,s)$, and $|\nabla\psi|\leq C/s$, we have
\begin{equation}\label{lpest}\Bigl(\dashint_{B(z,s)}(v^p)^{\frac{n}{n-p}}dx\Bigl)^{\frac{n-p}{n}} \leq  C(\lambda)\dashint_{B(z,2s)}v^{p} \, dx.
\end{equation}
Hence the weak reverse H\"{o}lder inequality (\ref{wrh}) holds in $U = \Omega_j$, with $w=v^p$ and $q=n/(n-p)$.

In the case when $p= n$, we appeal to the following Sobolev inequality: for each $q<\infty$, and for all $f\in C^{\infty}_0(B(z,2s))$,
\begin{equation}\label{sobpeq2}
\Bigl(\dashint_{B(z,2s)} |f(y)|^{q} \, dy \Bigl)^{1/q} \leq C(q)\Bigl(\int_{B(z,2s)}|\nabla f(y)|^p \, dy\Bigl)^{1/p}.
\end{equation}
see for example \cite{MZ97}, Corollary 1.57.
Using (\ref{sobpeq2}) in (\ref{sobineq}), and following the above argument to display (\ref{lpest}), we see that for each $q<\infty$,  (\ref{wrh}) holds in $U = \Omega_j$, with $w=v^p$.  When $p>n$, standard Sobolev inequalities show that (\ref{wrh}) continues to hold in $U=\Omega_j$, with $w=v^p$ and any $q\leq \infty$.

To apply Proposition \ref{locdoub}, it remains to  show that $\log(v)\in BMO(\Omega_j)$. For this, fix a ball $B(z, 2s)\subset \Omega_j$, and note that Lemma \ref{est3} implies
\begin{equation}\label{locgradmorest}\int_{B(z,s)} \frac{|\nabla u_k|^p}{u_k^p} dx \leq C(\Lambda) s^{n-p}.
\end{equation}
Indeed, to prove display (\ref{locgradmorest}) one simply picks $h\in C^{\infty}_0(B(z,2s))$ so that $h\equiv 1$ on $B(z,s)$ and $|\nabla h|\leq C/s$ in display (\ref{cacc3}).  On the other hand,
using the Poincar\'{e} inequality yields
\begin{equation}\begin{split}\label{poincare1}
\dashint_{B(z,s)} \!|\log v & -\dashint_{B(z,s)}\! \log v|^p dx\leq Cs^{p-n}\!\int_{B(z,s)} \!\frac{|\nabla u_k|^p}{u_k^p}  dx\leq \!C(\Lambda),
\end{split}\end{equation}
and hence $\log v \in BMO(\Omega_j)$, with $BMO$-norm depending only on $p, \Lambda, m$ and $M$ (see (\ref{BMOdef})).  In particular, $v^p$ satisfies both (\ref{wrh}) and (\ref{bmo1}) in $\Omega_j$.  Proposition \ref{locdoub} can now be applied to conclude that $v^p$ is doubling in $\Omega_j$, with doubling constant depending on $m$, $M$, $n$, $p$, $\lambda$ and $\Lambda$, see (\ref{doub}).  In other words, there exists a constant $C=C(\lambda, \Lambda)$, such that for each ball $B(z, 4s)\subset \Omega_j$ one has
\begin{equation}\label{locdoubref}
\dashint_{B(z,2s)} v^p dx \leq C \dashint_{B(z,s)} v^p dx.
\end{equation}

Since $\Omega_j$ is a connected set with smooth boundary, one can find a Harnack chain from $B(x,2r)$ to the fixed ball $B\subset\subset \Omega_1$.  In other words, there are three positive constants $c_0,\,c_1$ and $N>0$, depending on the smooth parameterization of $\Omega_j$, along with points $x_0, \dots x_N$ and balls $B(x_i, 4r_i)\subset \Omega_j$ satisfying
\begin{enumerate}
\item $B(x_0, r_0) = B(x,2r)$, and $B(x_N, r_N) = B$;
\item $r_i\geq c_0 \min(r_0, r_N)$, and $|B(x_i, r_i)\cap B(x_{i+1}, r_{i+1})| \geq c_1 \min(r_0, r_N)^n$ for all $i = 0\dots N-1$.
\end{enumerate}
Combining the Harnack chain with the property that $v^p$ is doubling in $\Omega_j$, a Harnack chain argument yields
\begin{equation}\nonumber\dashint_{B(x,2r)} v^p dx \leq C(B(x, r), \Omega_j, B, \lambda, \Lambda) \dashint_B v^p dx.
\end{equation}
By the normalization on $v^p$ (recall (\ref{approxsol})), we get
\begin{equation}\label{vlpest}\dashint_{B(x,2r)} v^p dx \leq C(B(x, r), \Omega_j, B, \lambda, \Lambda).
\end{equation}
To complete the proof, it remains to deduce the required bounds for the gradient in (\ref{apriori1}) and (\ref{apriori2}).  First suppose that $p\geq2$.  In this case, we combine Lemmas \ref{est1} and \ref{est3} with (\ref{vlpest}) to conclude that the following two estimates hold:
$$\int_{B(x, r)}|\nabla u_k|^p dx \leq \frac{C}{r^{p-n}}\Bigl(\dashint_{B(x,2r)} v^p dx\Bigl)^{1/q}\leq C,$$
and
$$\int_{B(x, r)}|\nabla u_k^{p-1}|^p dx \leq \frac{C}{r^{p-n}}\Bigl(\dashint_{B(x,2r)} v^p dx\Bigl)^{1/q}\leq C,$$
for a constant $C>0$, depending on $n$, $p$, $m$, $M$, $B$, $\Lambda$, $\lambda$, $\Omega_j$ and $B(x, r)$.  Here we have used H\"{o}lder's inequality in the first of the two displays above.

In the case $1<p<2$, note that combining Lemma \ref{est1} with (\ref{vlpest}), we have
$$\int_{B(x, r)}|\nabla u_k|^p dx \leq \frac{C}{r^{p-n}}\dashint_{B(x,2r)} v^p dx \leq C,$$
 for a positive constant $C>0$ depending on $n$, $p$, $m$, $M$, $B$, $\Lambda$, $\lambda$, $\Omega_j$ and $B(x, r)$.  On the other hand, a simple consequence of Lemma \ref{est3} is the inequality
$$\int_{B(x,r)}\frac{|\nabla u_k|^p}{u_k^p} dx \leq C,
$$
(cf. display (\ref{locgradmorest})).  One can readily interpolate between these two estimates to yield (\ref{apriori2}), indeed
\begin{equation}\begin{split}\int_{B(x,r)} |\nabla u_k|^p& u_k^{p(p-2)} dx  \leq \int_{B(x,r)\cap \{u_k\geq 1\}} |\nabla u_k|^p u_k^{p(p-2)} dx \\
& + \int_{B(x,r)\cap \{u_k\leq 1\}} |\nabla u_k|^p u_k^{p(p-2)} dx\\
& \leq \int_{B(x,r)} |\nabla u_k|^p dx + \int_{B(x,r)} \frac{|\nabla u_k|^p}{u_k^p}dx \leq C,
\end{split}\end{equation}
with $C$ depending on $n$, $p$, $m$, $M$, $B$, $\Lambda$, $\lambda$, $\Omega_j$ and $B(x, r)$ (but independent of $k$).
\end{proof}

\subsection{Convergence to a solution}\label{passtolimit}  Our first task is to deduce the existence of a solution $u^{(j)}$ of (\ref{schro}) in each $\Omega_j$.  We will concentrate on the argument in $\Omega_1$ for ease of notation.

From (\ref{apriori1}) and (\ref{apriori2}), it follows by choosing a suitable covering of $\Omega_1$ that there is a constant $K=K(\lambda, \Lambda, \Omega_1, B)$ so that for each $k\geq 2$, we have
\begin{equation}\label{omega1uniformbd}\begin{split}&\int_{\Omega_1} \! \left ( |\nabla u_k|^p \!+\!|u_k|^p \right ) dx\!\leq\! K,\! \text{ and } \!\int_{\Omega_1}\! \left ( |\nabla (u_k)^{p-1}|^p \!+\! |u_k|^{p(p-1)} \right) dx\!\leq\! K.
\end{split}\end{equation}
Using weak compactness of $W^{1,p}(\Omega_1)$, we claim that there is a subsequence $u_{j,1}$ of $u_j$, and a limit function $u^{(1)}\in W^{1,p}(\Omega_1)$ satisfying the following properties:
\begin{enumerate}
\item $u_{j,1} \rightarrow u^{(1)}$ weakly in $W^{1,p}(\Omega_1)$,
\item $u_{j,1}^{p-1}\rightarrow (u^{(1)})^{p-1}$ weakly in $W^{1,p}(\Omega_1)$,
\item $u_{j,1} \rightarrow u^{(1)}$ a.e. in $\Omega_1$.
\item $u_{j,1} \rightarrow u^{(1)}$ in $L^{pq}(\Omega_1)$, where $q=\max(p-1,1)$
\end{enumerate}
Indeed, from (\ref{omega1uniformbd}) and weak compactness, we first pass to a subsequence satisfying (1).  Appealing to Rellich's theorem, we obtain a further subsequence $u_{j,1}$ satisfying $u_{j,1}\rightarrow u^{(1)}$ in $L^p(\Omega)$, and also property (3).  But then $u_{j,1}^{p-1}$ converges almost everywhere to $(u^{(1)})^{p-1}$ in $\Omega_1$.  Since $u_{j,1}^{p-1}$ is uniformly bounded in $W^{1,p}(\Omega_1)$, it follows from standard Sobolev space theory (see Theorem 1.32 of \cite{HKM}) that we may pass to a further subsequence so that (2) holds.  If $1<p\leq2$ then the property (4) has already been demonstrated.  If $p> 2$, then a final application of Rellich's theorem to the sequence $u_{j,1}^{p-1}$ yields the required $L^{p(p-1)}(\Omega_1)$ convergence for a subsequence.

Let $h\in C^{\infty}_0(\Omega_1)$, and let $U\subset\subset\Omega_1$ be an open set containing $\text{supp}(h)$.  Recall that $\sigma \in L^{-1,p'}(U)$, from which it follows that
\begin{equation}\label{sigmaconv}
\langle \sigma_{j,1}, u_{j,1}^{p-1}h\rangle \rightarrow \langle \sigma, (u^{(1)})^{p-1}h\rangle, \text{ as }j\rightarrow \infty.
\end{equation}
Indeed, by the triangle inequality we write
\begin{equation}\begin{split}\nonumber|\langle \sigma_{j,1}, u_{j,1}^{p-1}h\rangle - \langle \sigma, (u^{(1)})^{p-1}h\rangle| \leq |\langle & \sigma, (u_{j,1}^{p-1}-(u^{(1)})^{p-1})h\rangle| \\
&+ |\langle (\sigma_{j,1} - \sigma), u_{j,1}^{p-1} h\rangle|.
\end{split}\end{equation}
The first term on the right hand side converges to zero on account of the weak convergence property (2).  For the second term, we estimate
$$|\langle (\sigma_{j,1} - \sigma), u_{j,1}^{p-1} h\rangle|\leq ||\nabla (u_{j,1}^{p-1}h)||_{L^p(U)}||\sigma_{j,1}-\sigma||_{L^{-1,p'}(U)}.
$$
The right hand side here convergences to zero due to standard properties of the mollification, since the first term is bounded due to (\ref{omega1uniformbd}).  This establishes (\ref{sigmaconv}).

We next claim that there is another subsequence of $u_{j,1}$ (again denoted by $u_{j,1}$) such that
\begin{equation}\label{gradconv}
\mathcal{A}_{j,1}(\cdot, \nabla  u_{j,1})\rightarrow \mathcal{A}(\cdot,\nabla u^{(1)})\text{ in }(L^1_{\text{loc}}(\Omega_1))^n.
\end{equation}
The proof of (\ref{gradconv}) will be quite involved.  For this reason we postpone the proof to Section \ref{convmeassec} and complete the rest of the argument.

From (\ref{sigmaconv}) and (\ref{gradconv}), it follows that
\begin{equation}\label{limit1}
-\text{div}(\mathcal{A}(\nabla u^{(1)})) = \sigma (u^{(1)})^{p-1} \text{ in }\mathcal{D}'(\Omega_1).
\end{equation}
Here the dominated convergence theorem has been used on the left hand side, in conjunction with (\ref{gradconv}).   By the the normalization of the sequence $(u_j)_j$ in (\ref{approxsol}) and property (4), we see that $\int_B (u^{(1)})^{qp}dx = 1, \text{ with }q = \max(p-1,1).$

The argument is now repeated in each $\Omega_k$ for $k>1$.  Each time we choose a subsequence $(u_{j,k})_j$ of the sequence $(u_{j,k-1})_j$ converging to $u^{(k-1)}$ in $\Omega_{k-1}$.  In this manner we arrive at functions $u^{(k)}$ satisfying
\begin{equation}\label{limitk}
-\text{div}(\mathcal{A}(\nabla u^{(k)})) = \sigma (u^{(k)})^{p-1} \text{ in }\mathcal{D}'(\Omega_k),
\end{equation}
and
\begin{equation}\label{uk1ball}\int_B (u^{(k)})^{qp}dx = 1, \text{ with }q = \max(p-1,1).
\end{equation}
Note that $u^{(k)} = u^{(k-1)}$ in $\Omega_{k-1}$ (equality here holding in the sense of $W^{1,p}(\Omega_{k-1})$ functions), which holds since the sequence $u_{j,k}$ converges weakly to both $u^{(k-1)}$ and $u^{(k)}$ in $W^{1,p}(\Omega_{k-1})$.  Hence if we define $u$ by  $u = u^{(k)}$ in $\Omega_k$, then $u$ is well defined and $-\text{div}(\mathcal{A}(\nabla u)) = \sigma u^{p-1}\text{ in }\Omega.$
From (\ref{uk1ball}) it follows that $u$ is not the zero function.

Recall that for each $k>j$, the approximate solution $u_{k}^{qp}$ is doubling in $\Omega_j$ with doubling constants independent of $k$ (see (\ref{locdoubref})).   Passing to the limit (using property (4)) it follows that $u^{qp}$ is locally doubling in $\Omega$.  In particular $u>0$ almost everywhere in $\Omega$, and hence $\log(u)$ is well defined almost everywhere.

 We shall now show that (\ref{mult1}) holds.  Fix $k\geq 1$.  Then, for each $j>k$, $\log(u_{j,k})\rightarrow \log (u)$ a.e. in $\Omega_k$.  Combining Lemma \ref{est3} with Theorem 1.32 of \cite{HKM}, we pass to a subsequence of $u_{j,k}$ whose logarithm converges weakly in $W^{1,p}(\Omega_k)$ to $\log (u)$.  From the lower-weak semicontinuity of $L^p(\Omega_k)$, it now follows that
\begin{equation}\label{multinomegak}\int_{\Omega} \frac{|\nabla u|^p}{u^p} |h|^p dx \leq C(\Lambda)\int_{\Omega}|\nabla h|^p, \text{ for all }h\in C^{\infty}_0(\Omega_k).
\end{equation}
Since there is no dependence on $k$ in constant appearing in (\ref{multinomegak}), we let $k\rightarrow \infty$ to deduce (\ref{mult1}).

Save for the estimate (\ref{gradconv}) (which will be proved in Section \ref{convmeassec}), to finish the proof of part (i) of Theorem \ref{mainthm} it remains to show that $v = \log(u)$ is a solution of (\ref{genric}) satisfying (\ref{mult2}).  This is the content of the following lemma:

\begin{lem}\label{logsublin}   Let $\Omega$ be an open set, and suppose that $\sigma\in L^{-1,p'}_{\rm{loc}}(\Omega)$.   If there exists a positive solution $u$ of (\ref{genschro}) satisfying (\ref{mult1}), then $v = \log(u)\in L^{1,p}_{\rm{loc}}(\Omega)$ is a solution of (\ref{genric}) and (\ref{mult2}) holds.
\end{lem}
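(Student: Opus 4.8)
The statement is the converse–construction link: starting from a positive weak solution $u$ of $(\ref{genschro})$ that satisfies the multiplier inequality $(\ref{mult1})$, produce from $v=\log u$ a weak solution of $(\ref{genric})$ with the multiplier bound $(\ref{mult2})$. The first and easiest point to settle is that $v\in L^{1,p}_{\mathrm{loc}}(\Omega)$ and that $(\ref{mult2})$ holds. Since $u$ is positive in the capacitary sense and $u^{p-1}\in L^{1,p}_{\mathrm{loc}}(\Omega)$, we have $u\in L^{1,p}_{\mathrm{loc}}(\Omega)$ with $u>0$ a.e., and $\nabla v=\nabla u/u$ in the weak sense on each $U\subset\subset\Omega$ (justified by truncating $u$ away from $0$ on the support of a cutoff, using the Harnack/local boundedness available for solutions of $(\ref{genschro})$ via Serrin's theory, or simply by the chain rule for Sobolev functions composed with $t\mapsto\log t$ on $\{t>\delta\}$). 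Then $(\ref{mult1})$ reads exactly as $\int_\Omega|\nabla v|^p|h|^p\,dx\le C_0\int_\Omega|\nabla h|^p\,dx$, which is $(\ref{mult2})$; in particular, testing with a cutoff $h\equiv 1$ on $U\subset\subset\Omega$ gives $\int_U|\nabla v|^p\,dx<\infty$, so $v\in L^{1,p}_{\mathrm{loc}}(\Omega)$.

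The substantive part is showing $v$ solves $(\ref{genric})$ in the distributional sense of Definition~\ref{schrodef}(ii), i.e.
\[
\int_\Omega \mathcal{A}(x,\nabla v)\cdot\nabla\varphi\,dx=(p-1)\int_\Omega \mathcal{A}(x,\nabla v)\cdot\nabla v\,\varphi\,dx+\langle\sigma,\varphi\rangle
\]
for all $\varphi\in C^\infty_0(\Omega)$. The formal computation is the standard one: $\nabla v=\nabla u/u$, so by homogeneity $(\ref{homog})$, $\mathcal{A}(x,\nabla v)=u^{1-p}\mathcal{A}(x,\nabla u)$, and one wants to test the weak formulation of $(\ref{genschro})$ with $\psi=u^{1-p}\varphi$. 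Differentiating, $\nabla\psi=u^{1-p}\nabla\varphi-(p-1)u^{-p}\varphi\,\nabla u$, so
\[
\langle\sigma,\varphi\rangle=\langle\sigma,u^{p-1}\psi\rangle=\int_\Omega\mathcal{A}(x,\nabla u)\cdot\nabla\psi\,dx
=\int_\Omega\mathcal{A}(x,\nabla v)\cdot\nabla\varphi\,dx-(p-1)\int_\Omega u^{-p}\varphi\,\mathcal{A}(x,\nabla u)\cdot\nabla u\,dx,
\]
and the last integral is $(p-1)\int_\Omega\varphi\,\mathcal{A}(x,\nabla v)\cdot\nabla v\,dx$ again by homogeneity. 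Rearranging gives $(\ref{genric})$.

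**Main obstacle.** The difficulty is purely one of admissibility of the test function: $\psi=u^{1-p}\varphi$ need not lie in $L^{1,p}_0(\Omega)$ (indeed Remark~\ref{nonlinregrem} shows $u^{p-1}$ may fail to be locally in $L^{1,p}$ at the borderline, and here we only know $u^{p-1}\in L^{1,p}_{\mathrm{loc}}$, not that $u^{1-p}$ has any Sobolev regularity), and all three integrals above must be shown finite and to behave under approximation. The plan is to regularize: replace $u$ by $u_\delta=\max(u,\delta)$ (for $\delta>0$), so that $\psi_\delta=u_\delta^{1-p}\varphi$ is a bounded $L^{1,p}_0$ function supported in a fixed $U\supset\operatorname{supp}\varphi$ with $U\subset\subset\Omega$, hence admissible in the weak formulation of $(\ref{genschro})$ (using $\sigma\in L^{-1,p'}(U)$). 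On $\{u>\delta\}$ we have $\nabla\psi_\delta=u^{1-p}\nabla\varphi-(p-1)u^{-p}\varphi\nabla u$, on $\{u<\delta\}$ we have $\nabla\psi_\delta=\delta^{1-p}\nabla\varphi$, and $\{u=\delta\}$ has $\nabla u=0$ a.e. The terms on $\{u<\delta\}$ tend to $0$ as $\delta\to 0^+$ since $|\{u<\delta\}\cap U|\to|\{u=0\}\cap U|=0$ (as $u>0$ a.e.) and the integrands are dominated there by $\delta^{1-p}|\nabla\varphi||\mathcal{A}(x,\nabla u)|$ plus $u^{-p}|\varphi|\mathcal{A}(x,\nabla u)\cdot\nabla u\le\delta^{-p}(\cdots)$ — for the latter one instead passes the bound through the identity, noting $\int_{\{u<\delta\}\cap U}u^{-p}\mathcal{A}(x,\nabla u)\cdot\nabla u\,|\varphi|\,dx\le\|\varphi\|_\infty\int_{\{u<\delta\}\cap U}\frac{|\nabla u|^p}{u^p}\,dx\cdot M/m\to 0$ by $(\ref{mult1})$ applied with a fixed cutoff and dominated convergence, since $\frac{|\nabla u|^p}{u^p}\in L^1_{\mathrm{loc}}$. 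On $\{u\ge\delta\}$, monotone/dominated convergence using the integrable majorants $\frac{|\nabla u|^p}{u^p}|\varphi|\in L^1$ (from $(\ref{mult1})$), $|\mathcal{A}(x,\nabla u)|\,u^{1-p}=|\mathcal{A}(x,\nabla v)|\le M|\nabla v|\in L^p_{\mathrm{loc}}$, and $\mathcal{A}(x,\nabla v)\cdot\nabla v\le M|\nabla v|^p\in L^1_{\mathrm{loc}}$ lets us pass $\delta\to0$ and recover the displayed identity. This also confirms en route that all three integrals in $(\ref{genric})$ converge absolutely on $\operatorname{supp}\varphi$, completing the proof.
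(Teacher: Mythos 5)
Your overall strategy is essentially the paper's: test the weak formulation of (\ref{genschro}) with a regularized version of $u^{1-p}\varphi$ (you use $\max(u,\delta)^{1-p}\varphi$, the paper uses $(u+\varepsilon)^{1-p}h$) and pass to the limit using the majorants supplied by (\ref{mult1}). Your treatment of the two $\mathcal{A}$-integrals is correct, and the derivation of (\ref{mult2}) and of $v\in L^{1,p}_{\text{loc}}(\Omega)$ from (\ref{mult1}) is fine; one caveat is that Serrin's Harnack inequality is not available here, since $\sigma$ is only a distribution, so the truncation/chain-rule justification you mention in passing is the one that must carry the argument.

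The genuine gap is the $\sigma$-term. With $\psi_\delta=u_\delta^{1-p}\varphi$ the right-hand side of the weak formulation is $\langle\sigma, u^{p-1}\psi_\delta\rangle=\langle\sigma,\min(u/\delta,1)^{p-1}\varphi\rangle$, which is not $\langle\sigma,\varphi\rangle$, and your proposal never proves that it converges to $\langle\sigma,\varphi\rangle$; all the majorants you list concern the $\mathcal{A}$-integrals. Since $\sigma$ is merely an element of $L^{-1,p'}_{\text{loc}}(\Omega)$, no dominated convergence on integrands applies to this term: one must show $\min(u/\delta,1)^{p-1}\varphi\to\varphi$ strongly in $L^{1,p}_0(U)$, i.e. $L^p$-convergence of the gradients, and this is precisely the step the paper singles out as delicate (the last term in (\ref{Schro2Ric}), where $\nabla\bigl(\tfrac{u}{u+\varepsilon}\bigr)^{p-1}$ is computed and dominated by $(p-1)|\nabla u|/u\in L^p_{\text{loc}}$ via (\ref{mult1})). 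The gap is fillable with the same tools in your setting: on $\{u<\delta\}$ one has $\bigl|\nabla\min(u/\delta,1)^{p-1}\bigr|=(p-1)u^{p-2}\delta^{1-p}|\nabla u|\le (p-1)|\nabla u|/u$ because $u\le\delta$, so dominated convergence gives the required $L^p$ gradient convergence and hence $\langle\sigma,\min(u/\delta,1)^{p-1}\varphi\rangle\to\langle\sigma,\varphi\rangle$; but as written, the convergence of the distributional pairing is missing, and it is the heart of the lemma. (A small slip elsewhere: $|\mathcal{A}(x,\nabla v)|\le M|\nabla v|^{p-1}\in L^{p'}_{\text{loc}}$, not $M|\nabla v|\in L^{p}_{\text{loc}}$.)
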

\begin{proof}
Let $\varepsilon>0$.  Then for $h\in C^{\infty}_0(\Omega)$, test the weak formulation of (\ref{schro}) with $\psi = h (u+\varepsilon)^{1-p}\in L^{1,p}_c(\Omega)$.  This yields
\begin{equation}\label{Schro2Ric}\int_{\Omega} \frac{\mathcal{A}(\,\cdot, \nabla u)}{(u+\varepsilon)^{p-1}} \cdot \nabla h \,dx =(p-1)\int_{\Omega} \frac{\mathcal{A}(\,\cdot, \nabla u)\cdot\nabla u}{(u+\varepsilon)^p} h dx + \langle \sigma \frac{u^{p-1}}{(u+\varepsilon)^{p-1}}, h\rangle.
\end{equation}
Letting $\varepsilon\rightarrow 0$, it follows from the condition (\ref{mult1}), and the dominated convergence theorem, that we have
$$\int_{\Omega} \frac{\mathcal{A}(\cdot,\nabla u)}{(u+\varepsilon)^{p-1}} \cdot \nabla h \,dx \rightarrow \int_{\Omega} \frac{\mathcal{A}(\,\cdot, \nabla u)}{u^{p-1}} \cdot \nabla h \,dx, \text{ and}$$
$$\int_{\Omega} \frac{\mathcal{A}(\,\cdot, \nabla u)\cdot\nabla u}{(u+\varepsilon)^p} h dx\rightarrow \int_{\Omega} \frac{\mathcal{A}(\,\cdot, \nabla u)\cdot\nabla u}{u^p} h dx.
$$
To handle the last term in (\ref{Schro2Ric}),  note that
$$\nabla \Bigl(\frac{u}{u+\varepsilon}\Bigl)^{p-1} = (p-1)\frac{\nabla u}{u}\Bigl(\frac{ u^{p-1}}{(u+\varepsilon)^{p-1}}\Bigl)\frac{\varepsilon}{u+\varepsilon}. 
$$
Hence $|\nabla \bigl(\tfrac{u}{u+\varepsilon}\bigl)^{p-1}|^p\leq (p-1)^p\bigl|\frac{\nabla u}{u}\bigl|^p$, and the right hand side here is in $L^1_{\text{loc}}(\Omega)$ on account of (\ref{mult1}).  Since $|\nabla \bigl(\tfrac{u}{u+\varepsilon}\bigl)^{p-1}|\rightarrow 0$ whenever $u>0$, the dominated convergence theorem yields $|\nabla \bigl(\frac{u}{u+\varepsilon}\bigl)^{p-1}|\rightarrow 0$ in $L^p_{\text{loc}}(\Omega)$ as $\varepsilon \rightarrow 0$.

On the other hand, it is clear that  $\frac{u}{u+\varepsilon} \rightarrow 1 \text{ in }L^p_{\text{loc}}(\Omega) \text{ as } \varepsilon \rightarrow 0,$ and therefore
$\tfrac{u}{u+\varepsilon} \rightarrow 1 \text{ in }W^{1,p}_{\text{loc}}(\Omega) \text{ as } \varepsilon \rightarrow 0.$
Since $\sigma \in L^{-1,p'}(\text{supp}(h))$, we conclude that
$$\langle \sigma \Bigl(\frac{u}{u+\varepsilon}\Bigl)^{p-1}, h\rangle \rightarrow\langle \sigma, h\rangle, \text{ as } \varepsilon \rightarrow 0.
$$
It follows that $v = \log(u)$ is a solution of (\ref{genric}).  The estimate (\ref{mult2}) is immediate from (\ref{mult1}).\end{proof}

\subsection{Convergence in measure}\label{convmeassec}
To complete the proof of part (i) of Theorem \ref{mainthm}, it remains to prove (\ref{gradconv}) for a subsequence of $(u_{j,1})_j$. Following a well known reduction, see for example Theorem 6.1 of \cite{BBGVP}, it suffices to assert a convergence in measure result.  First note that
$$\int_{\Omega_1} |\mathcal{A}_{\varepsilon_j}(\nabla u_{j,1}) - \mathcal{A}(\nabla u_{j,1})|dx \leq \omega(\varepsilon_j)\int_{\Omega_1} |\nabla u_{j,1}|^{p-1}dx\rightarrow 0, \text{ as }j\rightarrow \infty,
$$
where in the last line we are using (\ref{apriori1}) and (\ref{cont}).  As a result, in order to assert (\ref{gradconv}) it suffices to prove (for a suitable subsequence of $(u_{j,1})_j$) that
$$\mathcal{A}(\cdot, \nabla u_{j,1}) \rightarrow \mathcal{A}(\cdot, \nabla u^{(1)}) \text{ in }L^1_{\text{loc}}(\Omega_1).
$$
From the Vitali convergence theorem and the gradient estimate (\ref{apriori1}), this local $L^1$ convergence will follow once we assert that $\mathcal{A}(\cdot,\nabla u_{j,1})$ converges locally in measure to $\mathcal{A}(\cdot, \nabla u^{(1)})$ in $\Omega_1$.  Due to the continuity of the operator $\mathcal{A}$, this in turn is a consequence of the following lemma:
\begin{lem}\label{convmeas} Suppose $B_{2r} = B(x,2r)\subset \Omega_1$.  Then for every $\delta>0$, we have
$$|\{x\in B_r\, :\, |\nabla u_{j,1} - \nabla u_{k,1}|> \delta \}| \rightarrow 0 \text{ as }j,k\rightarrow \infty,
$$
\end{lem}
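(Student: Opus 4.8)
The plan is to deduce the statement from the corresponding fact for the logarithms $v_j:=\log u_{j,1}$, exploiting the Riccati equation. For $j$ large one has $\Omega_1\subset\subset\Omega_j$, and $u_{j,1}$ satisfies the Harnack inequality there, hence is locally bounded below; writing $\mathcal{A}_{\varepsilon_j},\sigma_{\varepsilon_j}$ for the mollified operator and datum of (\ref{approxsol}) for $u_{j,1}$, a direct computation using (\ref{homog}) (cf. Lemma \ref{logsublin}) shows that $v_j\in W^{1,p}_{\text{loc}}(\Omega_1)$ solves $-\mathrm{div}(\mathcal{A}_{\varepsilon_j}(\nabla v_j))=(p-1)\mathcal{A}_{\varepsilon_j}(\nabla v_j)\cdot\nabla v_j+\sigma_{\varepsilon_j}$ in $\Omega_1$ --- the decisive feature being that $\sigma$ now enters \emph{linearly}. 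Lemma \ref{est3} gives that $\nabla v_j$ is bounded in $L^p_{\text{loc}}(\Omega_1)$ uniformly in $j$. Since, by (\ref{omega1uniformbd}), $(u_{j,1})$ is bounded in $L^{pq}(B_r)$ (hence tight in measure there) while $u_{j,1}\to u^{(1)}$ a.e.\ and in $L^{pq}(\Omega_1)$, the identity $\nabla u_{j,1}-\nabla u_{k,1}=u_{j,1}(\nabla v_j-\nabla v_k)+(u_{j,1}-u_{k,1})\nabla v_k$ reduces the lemma to proving that $\nabla v_j-\nabla v_k\to0$ in measure on $B_r$ as $j,k\to\infty$.

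Fix $\phi\in C^{\infty}_0(B_{2r})$ with $0\le\phi\le1$ and $\phi\equiv1$ on $B_r$, and for $\tau>0$ test the Riccati equations for $v_j$ and for $v_k$ with the common function $\eta:=\phi^p T_\tau(v_j-v_k)$, where $T_\tau$ is truncation at height $\tau$; this is admissible because $v_j,v_k\in W^{1,p}_{\text{loc}}$ near $\mathrm{supp}\,\phi$ and $\eta$ is bounded, so the $L^1_{\text{loc}}$ gradient term may be paired against it. Subtracting, and writing $w:=v_j-v_k$, the left-hand side becomes
\[\int\phi^p\mathbf{1}_{\{|w|<\tau\}}\big[\mathcal{A}_{\varepsilon_j}(\nabla v_j)-\mathcal{A}_{\varepsilon_k}(\nabla v_k)\big]\cdot\nabla w\,dx+p\!\int\!\phi^{p-1}T_\tau(w)\,\nabla\phi\cdot\big[\mathcal{A}_{\varepsilon_j}(\nabla v_j)-\mathcal{A}_{\varepsilon_k}(\nabla v_k)\big]dx.\]
Replacing $\mathcal{A}_{\varepsilon_j},\mathcal{A}_{\varepsilon_k}$ by $\mathcal{A}$ in the first integral costs at most $C(\omega(\varepsilon_j)+\omega(\varepsilon_k))\to0$ by (\ref{cont}), (\ref{ellip}) and Lemma \ref{est3}, leaving the \emph{nonnegative} quantity $\int\phi^p\mathbf{1}_{\{|w|<\tau\}}[\mathcal{A}(\nabla v_j)-\mathcal{A}(\nabla v_k)]\cdot(\nabla v_j-\nabla v_k)\,dx$. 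The second (boundary) integral vanishes in the limit once $w\to0$ in $L^p_{\text{loc}}$, which holds since $v_j\to v:=\log u^{(1)}$ a.e., $v_j\in BMO_{\text{loc}}$ with uniform seminorm (as in (\ref{poincare1})), and the averages of $v_j$ over balls in $\Omega_1$ are bounded (the normalization $\int_B u_{j,1}^{pq}=1$ together with the doubling property (\ref{locdoubref}), which via Proposition \ref{locdoub} makes $u_{j,1}^{pq}$ a uniform $A_\infty$-weight), so John--Nirenberg and Vitali's theorem apply.

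The right-hand side of the subtracted identity is the heart of the matter, and here the linearity in $\sigma$ is decisive: it equals
\[(p-1)\!\int\!\big[\mathcal{A}_{\varepsilon_j}(\nabla v_j)\cdot\nabla v_j-\mathcal{A}_{\varepsilon_k}(\nabla v_k)\cdot\nabla v_k\big]\eta\,dx\;+\;\langle\sigma_{\varepsilon_j}-\sigma_{\varepsilon_k},\,\eta\rangle.\]
Because $0\le\mathcal{A}_{\varepsilon_i}(\nabla v_i)\cdot\nabla v_i\le M|\nabla v_i|^p$ and $|\eta|\le\tau$, the first term is $\le C\tau$ uniformly in $j,k$, again by Lemma \ref{est3}. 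For the second, fix once and for all a bounded open $U$ with $\overline U\subset\Omega$ and $\mathrm{supp}\,\phi\subset\subset U$, and a \emph{fixed} $\vec T\in(L^{p'}(U))^n$ with $\sigma=\mathrm{div}(\vec T)$ in $U$; then $\sigma_{\varepsilon_j}=\mathrm{div}(\varphi_{\varepsilon_j}*\vec T)$ near $\mathrm{supp}\,\phi$ with $\varphi_{\varepsilon_j}*\vec T\to\vec T$ in $L^{p'}$ there, so
\[|\langle\sigma_{\varepsilon_j}-\sigma_{\varepsilon_k},\eta\rangle|=\Big|\int(\varphi_{\varepsilon_j}*\vec T-\varphi_{\varepsilon_k}*\vec T)\cdot\nabla\eta\,dx\Big|\le\|\varphi_{\varepsilon_j}*\vec T-\varphi_{\varepsilon_k}*\vec T\|_{L^{p'}(U)}\,\|\nabla\eta\|_{L^p},\]
and $\|\nabla\eta\|_{L^p}\le C$ uniformly \emph{precisely because} $\nabla T_\tau(w)=\mathbf{1}_{\{|w|<\tau\}}(\nabla v_j-\nabla v_k)$ is bounded in $L^p_{\text{loc}}$ by Lemma \ref{est3}; hence this term $\to0$ as $j,k\to\infty$, uniformly in $\tau$. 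This pairing against the distributional $\sigma_{\varepsilon_j}$, which carries no uniform measure bound, is the main obstacle of the proof, and it is overcome exactly by passing to the Riccati equation (so that it collapses to the single pairing $\langle\sigma_{\varepsilon_j}-\sigma_{\varepsilon_k},\cdot\rangle$ against a fixed primitive of $\sigma$) together with the gradient multiplier estimate of Lemma \ref{est3}, which keeps the test function's gradient bounded in $L^p$.

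Collecting the bounds yields $\limsup_{j,k\to\infty}\int_{B_r}\mathbf{1}_{\{|w|<\tau\}}[\mathcal{A}(\nabla v_j)-\mathcal{A}(\nabla v_k)]\cdot(\nabla v_j-\nabla v_k)\,dx\le C\tau$. Since $w\to0$ in measure on $B_r$, the set $\{|w|\ge\tau\}\cap B_r$ has vanishing measure as $j,k\to\infty$, so letting $j,k\to\infty$ and then $\tau\to0$ shows that the nonnegative integrand $E_{j,k}:=[\mathcal{A}(\nabla v_j)-\mathcal{A}(\nabla v_k)]\cdot(\nabla v_j-\nabla v_k)$ tends to $0$ in measure on $B_r$. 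The monotonicity hypothesis (\ref{monotone}) then gives $\nabla v_j-\nabla v_k\to0$ in measure: for $p\ge2$ directly since $E_{j,k}\ge c|\nabla v_j-\nabla v_k|^p$; for $1<p<2$, by discarding the set $\{|\nabla v_j|+|\nabla v_k|>\Lambda_0\}$, whose measure is $\le C\Lambda_0^{-p}$ uniformly by Lemma \ref{est3}, on whose complement $|\nabla v_j-\nabla v_k|^2\le C\Lambda_0^{2-p}E_{j,k}$, and then sending $\Lambda_0\to\infty$. Finally the identity $\nabla u_{j,1}-\nabla u_{k,1}=u_{j,1}(\nabla v_j-\nabla v_k)+(u_{j,1}-u_{k,1})\nabla v_k$, together with the tightness of $(u_{j,1})$ and the $L^{pq}$ convergence $u_{j,1}\to u^{(1)}$ noted above, transfers this to $\nabla u_{j,1}-\nabla u_{k,1}$, proving the lemma.
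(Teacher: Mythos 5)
Your argument is correct, but it takes a genuinely different route from the paper's. The paper works directly with $\tilde u_j=u_{j,1}$, $\tilde u_k=u_{k,1}$: it splits the bad set into pieces governed by parameters $A$ and $\mu$, and estimates the main piece by testing (\ref{approxsol}) with $fgh^p$, where $f=(\mu-(\tilde u_j-\tilde u_k)_+)_+$ and $g=(2A-\max(\tilde u_j,\tilde u_k))_+$, combining monotonicity (\ref{monotone}), the local representation $\sigma=\mathrm{div}\,\vec T$, and the uniform bounds (\ref{omega1uniformbd}); the price is the careful handling of the products $\tilde u_i^{p-1}\vec T_i$ and of the differences $\tilde u_j^{p-1}-\tilde u_k^{p-1}$ (with a case distinction in $p$). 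You instead pass to $v_i=\log u_{i,1}$ --- legitimate at the approximation level, since the Harnack inequality keeps $u_{i,1}$ locally bounded away from zero and homogeneity (\ref{homog}) yields the Riccati equation with data $\sigma_{\varepsilon_i}$ --- and run a Boccardo--Murat-type truncation argument with $\eta=\phi^pT_\tau(v_j-v_k)$: the quadratic term is absorbed by $|\eta|\le\tau$ together with Lemma \ref{est3}, and the potential collapses to the single pairing $\langle\sigma_{\varepsilon_j}-\sigma_{\varepsilon_k},\eta\rangle=\int(\vec T_j-\vec T_k)\cdot\nabla\eta\,dx$, which vanishes because $\nabla\eta$ is bounded in $L^p$ (Lemma \ref{est3} again) while $\vec T_i\to\vec T$ in $L^{p'}$ near $\operatorname{supp}\phi$. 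This buys a structurally simpler main estimate (linearity in $\sigma$, no products $u^{p-1}\vec T$), at the cost of two inputs the paper's proof does not need: (i) $v_j-v_k\to0$ in measure on $B_r$, which requires $u^{(1)}>0$ a.e.; this is available without circularity, since it follows from the doubling (\ref{locdoubref}) passed to the limit via property (4) of Section \ref{passtolimit} (as the paper itself notes), but you should state this dependence explicitly; and (ii) the transfer back to $\nabla u_{j,1}-\nabla u_{k,1}$ by the product rule, which uses tightness of $(u_{j,1})$ and of $(\nabla v_k)$ --- both guaranteed by (\ref{omega1uniformbd}) and Lemma \ref{est3}, as you indicate.

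Two small simplifications: the term $p\int\phi^{p-1}T_\tau(w)\,\nabla\phi\cdot[\mathcal{A}_{\varepsilon_j}(\nabla v_j)-\mathcal{A}_{\varepsilon_k}(\nabla v_k)]\,dx$ is already $O(\tau)$ uniformly in $j,k$, since $|T_\tau(w)|\le\tau$ and the $L^{p'}$ norms of $\mathcal{A}_{\varepsilon_i}(\nabla v_i)$ are uniformly bounded on $\operatorname{supp}\nabla\phi$; hence the John--Nirenberg/normalization argument for $w\to0$ in $L^p_{\rm loc}$ --- the least detailed step in your write-up --- can be dispensed with entirely. Also $\|\nabla\eta\|_{L^p}\le C(1+\tau)$ rather than a $\tau$-free constant, because of the piece $p\phi^{p-1}T_\tau(w)\nabla\phi$, which is of course harmless for $\tau\le1$.
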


Note that this reduction is still valid without the continuity assumption on $\mathcal{A}$.  In this case one instead appeals to Nemitskii's theorem, as in \cite{BBGVP}, p. 259.

\begin{proof}  Let $\delta>0$.  To simplify notation put $u_{j,1} = \tilde{u}_j$, and $u^{(1)} = \tilde{u}$. We introduce parameters $A$ and $\mu$ satisfying $A>1$ and $0<\mu<A/2$, and write
$$|\{x\in B_r\, :\, |\nabla \tilde{u}_{j} - \nabla \tilde{u}_{k}|>\delta\}|\leq I  +II + III + IV,
$$
where
$$ I = |\{x\in B_r\,:\, |\nabla \tilde{u}_j| >A\}| + |\{x\in B_r\, : \, |\nabla \tilde{u}_k|>A\}|,
$$
$$II  =  |\{x\in B_r\,:\, \tilde{u}_j >A\}| + |\{x\in B_r\, : \, \tilde{u}_k>A\}|,
$$
$$III = |\{x\in B_r \, :\, |\tilde{u}_j -\tilde{u}_k| >\mu\}|,$$
and  $IV = |E|$, with $E$ defined by
\begin{equation}\begin{split}E = \{x\in B_r \, :\, &|\nabla \tilde{u}_j - \nabla \tilde{u}_k|>\delta, \, |\tilde{u}_j -\tilde{u}_k| \leq\mu; \,  |\nabla \tilde{u}_j| \leq A;\\
&\, |\nabla \tilde{u}_k|\leq A;\, \tilde{u}_j <A,\, \tilde{u}_k<A\}.
\end{split}\end{equation}
It is the estimate for $IV$ which will require a careful analysis.  We claim that there exists a constant $C(A, \delta)>0$, depending on $A$, $\delta$, $B(x,r)$, $\Omega_1$, the constant $K$ from (\ref{omega1uniformbd}), as well as $M$, $m$, $n$ and $p$, such that
\begin{equation}\label{convmeasgoal}IV\leq C(A,\delta)\cdot\bigl[ \mu^{\min(1, p-1)} + o(1)\bigl] \text{ as }j,k\rightarrow \infty.
\end{equation}
(we write $C(A,\delta)$ to emphasize the dependence on $A$ and $\delta$).

To show that this estimate will prove the lemma, let $\varepsilon>0$. First pick $A>1$ such that
$I + II \leq \varepsilon/4.$
Such a choice is possible by the uniform integrability estimate (\ref{omega1uniformbd}) and Chebyshev's inequality.

Next (with $A>1$ fixed), let us pick $\mu\in (0, A/2)$ and $N_1 \in \mathbf{N}$ so that if $j,k>N_1$ then $IV \leq \varepsilon/4.$  Here we have used the claimed estimate (\ref{convmeasgoal}).

With $\mu>0$ now fixed, the almost everywhere convergence of $\tilde{u}_j$ to $\tilde{u}$ yields $N\in \mathbf{N}$ with $N\geq N_1$ such that $III\leq \varepsilon /2$ for  every $j , k>N$.

We conclude that $|\{x\in B_r\, :\, |\nabla \tilde{u}_{j} - \nabla \tilde{u}_{k}|>\delta\}|\leq\varepsilon$ for $j,k>N$, as required.

It remains to prove (\ref{convmeasgoal}).  To this end, let $k,j>1$, and split $E$ into the two sets $E_1 = E\cap \{\tilde{u}_j \geq \tilde{u}_k\}$, and $E_2 = E\backslash E_1$.  We will shall prove (\ref{convmeasgoal}) with $E$ replaced by $E_1$.  The estimate for $E_2$ will follow analogously.  First note that from the properties of $E$, along with monotonicity assumption (\ref{monotone}), it follows that there is a positive constant $c(A, \delta)$ such that
$$c(\delta, A) \leq \bigl[\mathcal{A}(\nabla \tilde{u}_j) - \mathcal{A}(\nabla \tilde{u}_k)\bigl]\cdot \nabla (\tilde{u}_j - \tilde{u}_k)(x), \text{ for each }x\in E_1.$$

Let  $h\in C^{\infty}_0(B_{2r})$ be a nonnegative bump function satisfying $h\equiv 1$ on $B_r$, and $|\nabla h|\leq C$ (the constant here depends on $r$, but this is suppressed as the constant in (\ref{convmeasgoal}) may depend on $r$).  Since both $u_j\leq A$ and $u_k\leq A$ in $E$, the previous inequality yields
$$IV\! \leq\! \frac{c(\delta, A)}{A}\!\int_{E_1} \bigl[(\mathcal{A}(\nabla \tilde{u}_j) - \mathcal{A}(\nabla \tilde{u}_k))\cdot \nabla (\tilde{u}_j-\tilde{u}_k)\bigl](2A - \max(u_j, u_k))_+ h^p dx.
$$
 Define test functions $f$ and $g$ by
\begin{equation}\label{fgdef}f= (\mu - (\tilde{u}_j -\tilde{u}_k)_+)_+, \text{ and } g = (2A - \max(\tilde{u}_k,\tilde{u}_j))_+.
\end{equation}
Notice that $0\leq f\leq \mu$ and $0\leq g\leq 2A$.  Moreover, the chain rule for Sobolev functions (see for example \cite{AH96}, Theorem 3.3.1) guarantees that $f$ and $g$ are in the class $L^{1,p}(B_{2r})$, and satisfy the following properties:
\begin{enumerate}
\item $\nabla f = - \displaystyle\chi_{\{0<\tilde{u}_j-\tilde{u}_k<\mu\}}\nabla (\tilde{u}_j - \tilde{u}_k)$ a.e. on $B_{2r}$,
\item $\nabla g = -\chi_{\{\max(\tilde{u}_j,\tilde{u}_k)<2A\}}[ \chi_{\{\tilde{u}_k-\tilde{u}_j>0\}}\nabla(\tilde{u}_k-\tilde{u}_j)+\nabla \tilde{u}_j]$ a.e. on $B_{2r}$.
\end{enumerate}
To see the second identity write $\max(\tilde{u}_k,\tilde{u}_j) = \max(\tilde{u}_k-\tilde{u}_j,0)+\tilde{u}_j$.  Also, note that the product $fgh^p\in L^{\infty}(B_{2r})\cap L^{1,p}_0(B_{2r})$ (recall $h\in C^{\infty}_0(B_{2r})$), and hence is a valid test function for (\ref{approxsol}).

Using the monotonicity assumption once again, we observe that $-\bigl[(\mathcal{A}(\nabla \tilde{u}_j) -\mathcal{A}(\nabla \tilde{u}_k))\cdot \nabla f \bigl] gh^p\geq 0$ a.e. on $B_{2r}$, and hence
$$IV\leq - \frac{c(\delta, A)}{A}\int_{\Omega_1} \Bigl[(\mathcal{A}(\nabla \tilde{u}_j) -\mathcal{A}(\nabla \tilde{u}_k))\cdot \nabla f \Bigl] gh^p dx.
$$
We denote
$$V = \int_{\Omega_1} \Bigl[(\mathcal{A}(\nabla \tilde{u}_j) -\mathcal{A}(\nabla \tilde{u}_k))\cdot \nabla f \Bigl] gh^p dx,$$
and we will estimate this term by appealing to the PDE (\ref{approxsol}).  In preparation for this, we write
$$V =  -VI - VII + VIII+IX, \text{ with}
$$
$$VI = \int_{\Omega_1}\Bigl[(\mathcal{A}(\nabla \tilde{u}_j) -\mathcal{A}(\nabla \tilde{u}_k))\cdot \nabla g \Bigl] fh^p dx,
$$
$$VII = p \int_{\Omega_1}\Bigl[(\mathcal{A}(\nabla \tilde{u}_j) -\mathcal{A}(\nabla \tilde{u}_k))\cdot \nabla h \Bigl] fgh^{p-1} dx,
$$
$$VIII= \int_{\Omega_1}(\mathcal{A}_j(\nabla \tilde{u}_j) -\mathcal{A}_k(\nabla \tilde{u}_k))\cdot \nabla (fgh^{p}) dx,
$$
\begin{equation}\begin{split}\nonumber
IX = \int_{\Omega_1}\Bigl[&\mathcal{A}(\nabla \tilde{u}_j) - \mathcal{A}_j(\nabla \tilde{u}_j) \\
& + \mathcal{A}_k(\nabla \tilde{u}_k))-\mathcal{A}(\nabla \tilde{u}_k)\Bigl]\cdot \nabla (fgh^{p}) dx.
\end{split}\end{equation}
It is the term $VIII$ which requires care, and it is here where we shall use (\ref{approxsol}).  In all our estimates, we shall make frequent use of the two bounds in (\ref{omega1uniformbd}), and we recall the constant $K$ from those inequalities.

The terms $VI$ and $VII$ can be estimated in a straightforward manner.  For $VI$, observe that $0\leq f\leq \mu$, so we have
\begin{equation}\begin{split}|VI| &\leq \mu\int_{B_{2r}}[|\mathcal{A}(\nabla \tilde{u}_j)|+|\mathcal{A}(\nabla \tilde{u}_j)|]|\nabla g|h^p dx\\
&\leq M\mu\int_{B_{2r}}[|\nabla \tilde{u}_j|^{p-1}+|\nabla \tilde{u}_k|^{p-1}](|\nabla \tilde{u}_j|+|\nabla \tilde{u}_k|) h^p dx.\end{split}\end{equation}
where the second inequality follows from (\ref{ellip}).  Young's inequality now yields
$$|VI| \leq C\mu\Bigl(\int_{B_{2r}}|\nabla \tilde{u}_j|^p h^p dx + \int_{B_{2r}}|\nabla \tilde{u}_k|^p h^p dx\Bigl)\leq C\mu K,
$$
where (\ref{omega1uniformbd}) has been used.  For the estimate of $VII$, we further use that $0\leq g\leq 2A$ and $|\nabla h|\leq C$, and by similar estimates we obtain
$$|VII| \leq C\mu A \Bigl(\int_{B_{2r}}|\nabla \tilde{u}_j|^p h^p dx + \int_{B_{2r}}|\nabla \tilde{u}_k|^p h^p dx\Bigl)^{\frac{p-1}{p}} \leq CA\mu K^{\frac{p-1}{p}}.
$$

For $IX$, we use the continuity of the operator.  Indeed, using the product rule, along with our estimates for $f$ and $g$, we see that $$|\nabla (fgh^p)|\leq C(A+\mu)(|\nabla \tilde{u}_j|+|\nabla{u}_k|)|h|^p + C\mu A|h|^{p-1},$$ and hence we obtain
\begin{equation}\begin{split}\nonumber&\int_{\Omega_1}\Bigl[ \mathcal{A}(\nabla \tilde{u}_j)  - \mathcal{A}_j(\nabla \tilde{u}_j)]\cdot\nabla(fgh^p)dx\\
& = \int_{\Omega_1} \Bigl[\int_{B(x,\varepsilon_j)}\varphi_{\varepsilon_j}(y)( \mathcal{A}(x,\nabla \tilde{u}_j) - \mathcal{A}(x+y,\nabla \tilde{u}_j))dy\Bigl]\cdot \nabla (fgh^p)dx\\
& \leq \omega(\varepsilon_j) C(A^2+\mu)\int_{B_{2r}} |\nabla \tilde{u}_j|^{p-1}\Bigl[|\nabla \tilde{u}_j|+|\nabla \tilde{u}_k|+C\Bigl] dx \leq C(A)K\omega(\varepsilon_j).
\end{split}\end{equation}
(Recall that $\mu<A/2$ and $A>1$).  The right hand side here is of the order $o(1)$ as $j\rightarrow \infty$.  Estimating the difference with $j$ replaced by $k$ in the same manner, we obtain
$$IX = C(A)o(1),\text{ as }j,k\rightarrow \infty.
$$
When compared to (\ref{convmeasgoal}), these estimates for $VI, VII$ and $IX$ are good.

To handle the remaining term $VIII$, we use the equation (\ref{approxsol}) to obtain
$$VIII =  \int_{\Omega_1} fgh^p (\tilde{u}_j^{p-1}\sigma_j -\tilde{u}_k^{p-1}\sigma_k)dx,$$
where $\sigma_j = \varphi_{\varepsilon_{j,1}}*\sigma$.
To continue our estimates we need to make use of the local dual Sobolev property of $\sigma$.  There exists $\vec T\in L^{p'}(B_{2r})^n$ so that $\sigma  = \text{ div }\vec T$ in $\mathcal{D}'(B_{2r})$. As a result, we have
$\sigma_j = \text{div}(\vec T_j)$ with $\vec T_j = \varphi_{\varepsilon_{j}}*T$, and Minkowski's inequality for integrals yields the bound
$||T_j||_{L^{p'}(B_{2r})}  \leq ||T||_{L^{p'}(B_{2r})}.$

Integrating by parts, we proceed by writing $VIII= X + XI + XII,$
with
\begin{equation}\begin{split}\nonumber X = & \int_{\Omega_1} (\tilde{u}_j^{p-1} \vec T_j - \tilde{u}_k^{p-1}\vec T_k)\cdot (\nabla g ) f h^pdx\\
& + p\int_{\Omega_1} (\tilde{u}_j^{p-1}\vec T_j - \tilde{u}_k^{p-1}\vec T_k)\cdot (\nabla h ) h^{p-1}f g dx,
\end{split}\end{equation}
$$XI =  \int_{\Omega_1} (\tilde{u}_j^{p-1}\vec T_j - \tilde{u}_k^{p-1}\vec T_k)\cdot (\nabla f ) g h^pdx,
$$
and
$$XII = \int_{\Omega_1} (\nabla(\tilde{u}_j^{p-1})\cdot\vec T_j - \nabla (\tilde{u}_k^{p-1})\cdot\vec T_k) fg h^p dx.$$
The estimate for $XI$ will be the most delicate (when the gradient falls on $f$).  To bound $X$, recall that $f\leq \mu$ and  $\max(\tilde{u}_j,\tilde{u}_k)\leq 2A$ if  $\nabla g\neq 0$. We therefore see that
$$|X|\leq (2A)^{p-1}\mu\int_{B_{2r}}[|\vec T_j|+|\vec T_k|]|\nabla g| h^p dx \leq C(2A)^{p-1}||\vec T||_{L^{p'}}(B_{2r})K^{1/p}.
$$
The estimate for $XII$ is similar.  Indeed, we notice that
$$|XII| \leq 2A \mu \int_{B_{2r}} [|\nabla \tilde{u}_j^{p-1}|+|\nabla \tilde{u}_k^{p-1}|](|\vec T_j|+|\vec T_k|)h^p dx,$$
which does not exceed $CA\mu||\vec T||_{L^{p'}(B_{2r})}K^{1/p}$.

It remains to estimate $XI$.  It will be convenient to denote
$$F = \{0< \tilde{u}_j -\tilde{u}_k < \mu\}\cap \{\tilde{u}_j \leq 2A\}\cap  B_{2r}.
$$
Note that $\nabla f = 0$ almost everywhere outside of $\{0< \tilde{u}_j -\tilde{u}_k < \mu\}$, and $g=0$ on the set $\max(\tilde{u}_j, \tilde{u}_k)>2A$.  As a result $g \nabla f = 0$ almost everywhere outside $F$, and the integral in $XI$ can be taken over the set $F$.  The triangle inequality now yields
\begin{equation}\nonumber\begin{split}|XI| \leq &\Bigl| \int_{F} \nabla(\tilde{u}_j-\tilde{u}_k)\cdot \vec T_j(\tilde{u}_j^{p-1} - \tilde{u}_k^{p-1})(2A - \tilde{u}_j) h^p dx\Bigl|\\
&+ \Bigl|\int_{F} \nabla(\tilde{u}_j-\tilde{u}_k)\cdot (\vec T_j - \vec T_k) \tilde{u}_k^{p-1}(2A-\tilde{u}_j) h^p dx\Bigl|.
\end{split}\end{equation}
The second term here is easily estimated using the gradient estimates.  Indeed, since $0\leq \tilde{u}_k\leq \tilde{u}_j\leq 2A$ on $F$, we have
\begin{equation}\nonumber\begin{split}\Bigl|\int_{F} & \nabla(\tilde{u}_j-\tilde{u}_k)\cdot (\vec T_j - \vec T_k) \tilde{u}_k^{p-1}(2A-\tilde{u}_j) h^p dx\Bigl| \\
&\leq A^p\int_{F}\bigl[|\nabla \tilde{u}_j|+|\nabla \tilde{u}_k\bigl]|\vec T_j - \vec T_k| h^p dx\\
& \leq CA^p\bigl[||(\nabla \tilde{u}_j) h^p||_p+||(\nabla \tilde{u}_k )h^p||_p\bigl]||(\vec T_j - \vec T_k)h^p||_{p'}\\
&\leq CK^{1/p}A^p||(\vec T_j - \vec T_k)h^p||_{p'},
\end{split}\end{equation}
and from standard properties of the mollification, the right hand side of this bound is of the order
$C(A)o(1)$,  as $j,k\rightarrow \infty.$

Now for our final estimate. We have to find a bound for the integral
$$XIII = \Bigl| \int_{F} \nabla(\tilde{u}_j-\tilde{u}_k)\cdot \vec T_j(\tilde{u}_j^{p-1} - \tilde{u}_k^{p-1})(2A - \tilde{u}_j) h^p dx\Bigl|.
$$
To do this, let $x\in F$, and first note that if $1<p<2$ we have
$$\tilde{u}_j(x)^{p-1}-\tilde{u}_k(x)^{p-1}\leq (\tilde{u}_j(x) - \tilde{u}_k(x))^{p-1}\leq \mu ^{p-1}.
$$
If $p\geq 2$, we instead observe that
\begin{equation}\begin{split}\nonumber \tilde{u}_j(x)^{p-1}-\tilde{u}_k(x)^{p-1}&\leq (p-1)(\tilde{u}_j(x) -\tilde{u}_k(x))\cdot(\tilde{u}_j(x)^{p-2}+\tilde{u}_k(x)^{p-2})\\
&\leq C(p-1)A^{p-2}\mu.
\end{split}\end{equation}
Either way, we obtain
\begin{equation}\begin{split}XIII \leq & C A^{1+ \max(p-2,0)}\mu^{\min(p-1,1)}\int_F |\nabla (\tilde{u}_j-\tilde{u}_k)|\vec T_j||h^p| dx\\
&\leq CK^{1/p}||\vec T||_{L^{p'}(B_{2r})}A^{1+ \max(p-2,0)}\mu^{\min(p-1,1)}.
\end{split}\end{equation}
Bringing all our estimates together, the desired inequality (\ref{convmeasgoal}) follows.
\end{proof}

\subsection{Proof of Theorem \ref{mainthm}, part (ii)}

\begin{proof}[Proof of Theorem \ref{mainthm}, part (ii).]  Suppose there exists a solution $v\in L^{1,p}_{\text{loc}}(\Omega)$ of (\ref{genric}).  Then testing the weak formulation of (\ref{genric}) with $|h|^p$, for $h\in C^{\infty}_0(\Omega)$, we see that
\begin{equation}\nonumber
\langle \sigma, |h|^p \rangle \leq Mp\int_{\Omega} |\nabla v|^{p-1}|\nabla h||h|^{p-1}dx - m(p-1)\int_{\Omega} |\nabla v|^p |h|^p dx,
\end{equation}
where (\ref{ellip}) has been used.  Applying Young's inequality, we have
$$Mp\int_{\Omega} |\nabla v|^{p-1}|\nabla h||h|^{p-1}dx\leq \frac{M^p}{m^{p-1}}\int_{\Omega}|\nabla h|^p + m(p-1)\int_{\Omega} |\nabla v|^p |h|^p dx,
$$
and hence,
$$\langle \sigma, |h|^p \rangle  \leq \frac{M^p}{m^{p-1}}\int_{\Omega}|\nabla h|^p dx.
$$
Using ellipticity of $\mathcal{A}$ (see (\ref{ellip})), we conclude that (\ref{pformupper}) holds with $\lambda = (M/m)^p$.

Let us now suppose in addition that $v$ satisfies (\ref{mult2}) with a constant $C_0>0$.  Testing (\ref{genric}) again with $|h|^p$ for $h\in C^{\infty}_0(\Omega)$, we can estimate
\begin{equation}\begin{split}\langle \sigma, |h|^p \rangle & \geq -pM\int_{\Omega} |\nabla v|^{p-1}|\nabla h||h|^{p-1}dx - M\int_{\Omega} |\nabla v|^p |h|^p dx\\
& \geq - 2M\int_{\Omega}|\nabla v|^p h^p dx -\int_{\Omega} |\nabla h|^p dx.
\end{split}\end{equation}
Where the first inequality here follows from (\ref{ellip}), and the second is the a consequence of Young's inequality.  Applying (\ref{mult2}) we conclude that
$$\langle \sigma, |h|^p \rangle  \geq -(2MC_0 + 1) \int_{\Omega}|\nabla h|^p dx.
$$
Hence (\ref{pformlower}) holds with $\Lambda = M(2MC_0+1)$.
\end{proof}

\section{A remark on higher integrability}\label{highintrem}

In this section we remark on higher integrability of positive solutions of (\ref{genschro}).  We show how the method of Br\'{e}zis and Kato \cite{BK79} can be incorporated into our framework. Let $\Omega\subset\mathbf{R}^n$ be an open set.

\begin{thm}\label{highintthm}  Suppose that $\sigma\in L^{-1,p}_{\rm{loc}}(\Omega)$ satisfies (\ref{pformupper}) with constant $\lambda>0$ and  (\ref{pformlower}) for $\Lambda>0$.   For each $q\in (0,\infty)$, there exists $\lambda(q)>0$ such that if $\lambda< \lambda(q)$, then there exists a positive solution $u\in L^{1,p}_{\rm{loc}}(\Omega)\cap L^q_{\rm{loc}}(\Omega)$ of (\ref{genschro}).
\end{thm}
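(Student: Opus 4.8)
The plan is to run a Moser/Br\'ezis--Kato iteration \cite{BK79} on the approximating sequence built in Section \ref{equthmsec}, and then pass to the limit with Fatou's lemma. First, since $L^{q}(K)\subset L^{q'}(K)$ for $q'\le q$ on every compact set, I may assume $q$ is as large as I wish, and I shrink $\lambda(q)$ if needed so that $\lambda(q)<p^{\#}$; then the hypotheses of Theorem \ref{mainthm}(i) are in force. I would then recall from the proof of Theorem \ref{mainthm} that the positive solution $u$ of (\ref{genschro}) arises as a local a.e.\ limit, along a subsequence, of functions $u_{k}$ solving (\ref{approxsol}) in $\Omega_{k}$, where $\mathcal{A}_{k}=\mathcal{A}_{\varepsilon_{k}}$ retains the ellipticity constants $m,M$, the mollified potential $\sigma_{k}$ obeys the form bound (\ref{smoothemb}) with the \emph{same} constant $\lambda$, and -- crucially -- for each fixed $j$ and every $k>j$ one has $u_{k}\in L^{1,p}(\Omega_{j})\cap L^{p(p-1)}(\Omega_{j})$ with bounds independent of $k$, by (\ref{omega1uniformbd}); moreover each $u_{k}$ is locally H\"older continuous, hence locally bounded, in $\Omega_{k}$ by Lemma \ref{existencelem}. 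It therefore suffices to prove, for every compact $K\subset\Omega$, a bound on $\|u_{k}\|_{L^{q}(K)}$ uniform in $k$, and then let $k\to\infty$.

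Fix a ball $B_{2r}=B(x,2r)\subset\subset\Omega_{j}$ and $k>j$. For $\beta\ge 0$ and $L>0$ I would set $w=\min(u_{k},L)$, choose $\varphi\in C_{0}^{\infty}(B_{2r})$ with $\varphi\equiv1$ on $B_{r}$, $0\le\varphi\le1$, $|\nabla\varphi|\le C/r$, and test (\ref{approxsol}) with $\psi=\varphi^{p}u_{k}w^{p\beta}$. Since $u_{k}$ is bounded on $\mathrm{supp}\,\varphi$, $\psi\in L^{1,p}_{0}(\Omega_{k})\cap L^{\infty}$ is admissible, and $u_{k}^{p-1}\psi=h^{p}$ with $h=\varphi u_{k}w^{\beta}\in L^{1,p}_{0}(\Omega_{k})$. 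Invoking (\ref{smoothemb}) for $\sigma_{k}$ and (\ref{ellip}) gives
\begin{equation}\nonumber
\int_{\Omega_{k}}\mathcal{A}_{k}(\nabla u_{k})\cdot\nabla\psi\,dx=\int_{\Omega_{k}}h^{p}\,d\sigma_{k}\le\lambda\int_{\Omega_{k}}\mathcal{A}_{k}(\nabla h)\cdot\nabla h\,dx\le\lambda M\int_{\Omega_{k}}|\nabla h|^{p}\,dx.
\end{equation}
Expanding $\nabla\psi$ and $\nabla h$ and using $\nabla w=\chi_{\{u_{k}<L\}}\nabla u_{k}$, the left side -- after discarding a nonnegative term and bounding the cross term by Young's inequality -- is at least $\tfrac{m}{2}\int\varphi^{p}w^{p\beta}|\nabla u_{k}|^{p}\,dx-C\int|\nabla\varphi|^{p}u_{k}^{p}w^{p\beta}\,dx$, while
\begin{equation}\nonumber
\int|\nabla h|^{p}\,dx\le C(1+\beta)^{p}\int\varphi^{p}w^{p\beta}|\nabla u_{k}|^{p}\,dx+C\int|\nabla\varphi|^{p}u_{k}^{p}w^{p\beta}\,dx,
\end{equation}
the amplification factor $(1+\beta)^{p}$ multiplying only the gradient term (on $\{u_{k}\ge L\}$ there is no such factor, since $\nabla w=0$ and $w^{p\beta}=L^{p\beta}$ there).

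Hence, provided
\begin{equation}\label{bkabsorb}
\lambda<\frac{m}{C\,M\,(1+\beta)^{p}},
\end{equation}
the gradient term is absorbed on the left, leaving $\int\varphi^{p}w^{p\beta}|\nabla u_{k}|^{p}\,dx\le C(\beta)r^{-p}\int_{B_{2r}}u_{k}^{p}w^{p\beta}\,dx$, and then $\int|\nabla h|^{p}\,dx\le C(\beta)r^{-p}\int_{B_{2r}}u_{k}^{p}w^{p\beta}\,dx$ as well. Applying Sobolev's inequality to $h$ -- for $1<p<n$ in the form $\|h\|_{L^{p^{*}}}^{p}\le C\|\nabla h\|_{L^{p}}^{p}$ with $p^{*}=\tfrac{np}{n-p}$ -- and then letting $L\to\infty$, which is legitimate once $u_{k}\in L^{p(1+\beta)}(B_{2r})$, I obtain
\begin{equation}\label{bkstep}
\Bigl(\int_{B_{r}}u_{k}^{p^{*}(1+\beta)}\,dx\Bigr)^{\!p/p^{*}}\le C(\beta)\,r^{-p}\int_{B_{2r}}u_{k}^{p(1+\beta)}\,dx,
\end{equation}
with $C(\beta)$ independent of $k$.

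Finally I would iterate (\ref{bkstep}) along the exponents $\gamma_{i}=\bigl(\tfrac{n}{n-p}\bigr)^{i}$: each application (with $\beta=\gamma_{i}-1$, on slightly shrinking concentric balls) converts a uniform local $L^{p\gamma_{i}}$ bound into a uniform local $L^{p\gamma_{i+1}}$ bound, the base case $u_{k}\in L^{p}_{\mathrm{loc}}$ (indeed $u_{k}\in L^{p^{*}}_{\mathrm{loc}}$) being furnished by (\ref{omega1uniformbd}) and Sobolev's inequality. Since $p\gamma_{i}\to\infty$, after $N=N(q,n,p)$ steps one gets a bound on $\|u_{k}\|_{L^{q}(K)}$ uniform in $k$, provided (\ref{bkabsorb}) holds for each $\beta\in\{\gamma_{0}-1,\dots,\gamma_{N}-1\}$; as $\gamma_{i}$ is increasing this reduces to the single requirement $\lambda<\lambda(q):=c\,m\,(M\gamma_{N}^{\,p})^{-1}$. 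Fatou's lemma along the a.e.\ convergent subsequence then yields $u\in L^{q}_{\mathrm{loc}}(\Omega)$. When $p=n$ one uses (\ref{sobpeq2}) instead and reaches every $q$ in a single step, so $\lambda(q)$ may be taken independent of $q$; when $p>n$, $L^{1,p}_{\mathrm{loc}}(\Omega)\subset L^{\infty}_{\mathrm{loc}}(\Omega)$ by Morrey's inequality, so $u\in L^{q}_{\mathrm{loc}}(\Omega)$ for all $q$ with no restriction on $\lambda$. I expect the only genuine difficulty to lie in the bookkeeping of the absorption step: unlike the classical Br\'ezis--Kato argument, where smallness is engineered by splitting the potential into a small part and a bounded part, here the postulated smallness of $\lambda$ must dominate the Moser amplification factor $(1+\beta)^{p}$, which is precisely why $\lambda(q)$ must degenerate as $q\to\infty$ when $p<n$.
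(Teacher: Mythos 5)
Your proposal is correct and follows essentially the same route as the paper: the paper also runs a Br\'ezis--Kato-type iteration on the approximating sequence $u_k$, via Lemma \ref{highercacc} (a Caccioppoli estimate for $u_k^{s/p}$ under a smallness condition $\lambda<\lambda(s)$, proved by testing with powers of $u_k$ exactly as you do), followed by Sobolev's inequality, an iteration over concentric balls, and Fatou's lemma. The only differences are cosmetic: the paper tests with $u_k^{s-p+1}h^p$ directly (no truncation, since the approximants are locally bounded) and absorbs via the Minkowski inequality (\ref{mink}) coming from convexity, which yields the explicit threshold $\lambda(s)=(s-p+1)(p/s)^p$, whereas your Young-inequality absorption gives a cruder but still positive $\lambda(q)$, which is all the statement requires.
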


In dimensions $n=1,2$, the result follows from Theorem \ref{mainthm} using standard Sobolev inequalities.  We shall therefore assume that $n\geq3$.  We will continue to use the notation from the proof of Theorem \ref{mainthm} from Section \ref{equthmsec}.  In particular, we will assume without loss of generality that $\Omega$ is connected, and we will use the approximate sequence of solutions constructed from (\ref{approxsol}).  The result is based on an iterative use of the following lemma:

\begin{lem}\label{highercacc}  Let $s>p$, and suppose that
\begin{equation}\label{lambdaq}\lambda < \lambda(s) = (s-p+1)\Bigl(\frac{p}{s}\Bigl)^p.\end{equation}
Then there exists a constant $C=C(\lambda)$, such that for all $k>j$
\begin{equation}\label{highcacst}
\int_{\Omega_j} |\nabla (u_k)^{s/p}|^p |h|^p dx \leq C\int_{\Omega} u_k^{s} |\nabla h|^p dx, \text{ for all }h\in C^{\infty}_0(\Omega_j).
\end{equation}
\end{lem}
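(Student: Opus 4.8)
The plan is to follow the proof of Lemma~\ref{est2} almost verbatim; that lemma is essentially the special case $s = p(p-1)$, in which $\lambda(s) = (p-1)^{2-p}$. Fix $k>j$, put $v = u_k$, and fix $h\in C^{\infty}_0(\Omega_j)$ with $h\geq 0$ (this suffices for (\ref{highcacst}), in which only $|h|^p$ occurs, since one may replace $h$ by $|h|$). By Lemma~\ref{existencelem}, $v\in C^{\alpha}_{\text{loc}}(\Omega_j)\cap W^{1,p}(\Omega_j)$ is positive and satisfies the Harnack inequality, so on $\text{supp}(h)$ it is bounded above and bounded below away from zero; consequently $v^{s-p+1}h^p$ and $v^{s/p}h$ both lie in $L^{1,p}_0(\Omega_j)$. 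Testing the weak formulation of (\ref{approxsol}) with $\psi = v^{s-p+1}h^p$ and carrying out the differentiation gives, using $v^{s-p+1}h^p\cdot v^{p-1} = v^s h^p = |v^{s/p}h|^p$,
\begin{equation}\nonumber
(s-p+1)\int_{\Omega_j}[\mathcal{A}_k(\nabla v)\cdot\nabla v]\,v^{s-p}h^p\,dx + p\int_{\Omega_j}[\mathcal{A}_k(\nabla v)\cdot\nabla h]\,v^{s-p+1}h^{p-1}\,dx = \int_{\Omega_j} v^s h^p\, d\sigma_k.
\end{equation}

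Next I would estimate the right-hand side from above by $\lambda\int_{\Omega_j}\mathcal{A}_k(\nabla(v^{s/p}h))\cdot\nabla(v^{s/p}h)\,dx$ via Lemma~\ref{mollem}, and control the second term on the left by $\varepsilon\int_{\Omega_j}[\mathcal{A}_k(\nabla v)\cdot\nabla v]v^{s-p}h^p\,dx + C(\varepsilon)\int_{\Omega_j}v^s|\nabla h|^p\,dx$ through Young's inequality (\ref{young}) and (\ref{ellip}); the splitting $|\nabla v|^{p-1}v^{s-p+1}|\nabla h|h^{p-1} = (|\nabla v|^{p-1}v^{(s-p)(p-1)/p}h^{p-1})(v^{s/p}|\nabla h|)$ is precisely what makes Young's inequality deliver exactly these two terms. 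Writing $J = \int_{\Omega_j}[\mathcal{A}_k(\nabla v)\cdot\nabla v]v^{s-p}h^p\,dx$, raising the resulting inequality to the power $1/p$, applying (\ref{elemabineq}), and then Minkowski's inequality (\ref{mink}) to $\nabla(v^{s/p}h) = \nabla(v^{s/p})\,h + v^{s/p}\nabla h$ together with the homogeneity identity $\mathcal{A}_k(\nabla v^{s/p})\cdot\nabla v^{s/p} = (s/p)^p v^{s-p}[\mathcal{A}_k(\nabla v)\cdot\nabla v]$ and the boundedness of $\mathcal{A}_k$ from (\ref{ellip}), one arrives at an inequality of the form
\begin{equation}\nonumber
(s-p+1)^{1/p}J^{1/p} \leq \Bigl(\lambda^{1/p}\tfrac{s}{p} + \varepsilon^{1/p}\Bigr)J^{1/p} + C(\varepsilon,\lambda)\Bigl(\int_{\Omega_j} v^s|\nabla h|^p\,dx\Bigr)^{1/p}.
\end{equation}

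The hypothesis (\ref{lambdaq}) is exactly the statement $\lambda^{1/p}\,s/p < (s-p+1)^{1/p}$, so I can choose $\varepsilon>0$ small enough that $\lambda^{1/p}\,s/p + \varepsilon^{1/p} < (s-p+1)^{1/p}$, rearrange to get $J \leq C(\lambda)\int_{\Omega_j}v^s|\nabla h|^p\,dx$, and finish by invoking ellipticity once more together with $|\nabla(v^{s/p})|^p = (s/p)^p v^{s-p}|\nabla v|^p$ to obtain (\ref{highcacst}). I do not anticipate a real obstacle: the argument is structurally identical to Lemma~\ref{est2}, with the single exponent $p-1$ there replaced by $s/p$. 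The only point deserving a remark is the admissibility of the test functions $v^{s-p+1}h^p$ and $v^{s/p}h$, which is guaranteed qualitatively (with no quantitative dependence on $k$) by the local H\"older continuity and the Harnack inequality for $u_k$ provided by Lemma~\ref{existencelem}.
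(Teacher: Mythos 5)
Your proposal is correct and is exactly the argument the paper intends: the paper's proof of Lemma \ref{highercacc} simply says it mimics Lemma \ref{est2}, and your computation carries that out, with the exponent bookkeeping ($v^{s-p+1}h^p$ as test function, the splitting $v^{s-p+1}=v^{(s-p)(p-1)/p}v^{s/p}$ for Young's inequality, and the homogeneity identity giving the factor $(s/p)^p$) all checking out, and the smallness condition $\lambda^{1/p}s/p<(s-p+1)^{1/p}$ coinciding with (\ref{lambdaq}).
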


\begin{proof}  The proof mimics the proof of Lemma \ref{est2}.  We leave the details to the reader.
\end{proof}

\begin{proof}[Proof of Theorem \ref{highintthm}] Fix $k>j$.  We may assume that $q>np/(n-p)$ (otherwise the result has already been proved). We put $s_j = \bigl(\frac{n-p}{n}\bigl)^jq,$
for $j=0,\dots, N$.  Here $N$ is chosen to be the largest integer so that $s_N>p$.  Note that $s_N \leq np/(n-p)$.

Let us suppose that $\lambda<\lambda(s_1)$, with $\lambda(s_1)$ as defined in (\ref{lambdaq}).  Since $\lambda(s)$ is monotone decreasing in $s$ for $s>p$, we have $\lambda<\lambda(s_j)$ for all $1\leq j\leq N$.

For each $\ell=0, \dots, N-1$, applying the Sobolev inequality in (\ref{highcacst}) yields the inequality
\begin{equation}\label{sellsob}\Bigl(\int_{\Omega_j} u_k^{s_{\ell}}|h|^p dx\Bigl)^{\frac{n-p}{n}} \leq C\int_{\Omega_j} u_k^{s_{\ell+1}}|\nabla h|^p dx, \text{ for any } h\in C^{\infty}_0(\Omega).
\end{equation}
Now fix a ball $B(x,8r)\subset \Omega_j$, and  define functions $h_{\ell}$, for $\ell=0 \dots N-1,$ satisfying
$$h_{\ell}\in C^{\infty}_0(B(x, (1+ \tfrac{\ell+1}{N})r)),\, h_{\ell} \equiv 1 \text{ on }B(x, (1+\tfrac{\ell}{N})r), \, |\nabla h_{\ell}| \leq \tfrac{CN}{r}.
$$
Substituting these test functions in (\ref{sellsob}) yields
$$\Bigl(\dashint_{B(x,(1+\ell/N)r)} u_k^{s_{\ell}} dx\Bigl)^{\frac{n-p}{n}}\leq CN^p \dashint_{B(x, (1+(\ell+1)/N)r)} u_k^{s_{\ell+1}} dx, \text{ for each }\ell.
$$
An $N$-fold iteration of the preceeding inequality results in
$$\dashint_{B(x,r)} u_k^{s_0} dx \leq C(N, q, \lambda, r)\Bigl(\dashint_{B(x,2r)} u_k^{s_N} dx\Bigl)^{\frac{nN}{n-p}}.$$
Since $s_N\leq np/(n-p)$, the right hand side of this equation can be bounded using the estimate (\ref{apriori1}) (by way of Sobolev's inequality).  We arrive at
\begin{equation}\label{unfqint}\int_{B(x,r)} u_k^q dx \leq C(q, B(x,r),\lambda, \Lambda, \Omega_j, B).
\end{equation}
Mimicking the passage to the limit in Section \ref{passtolimit}, we arrive (with an additional application of Fatou's lemma) at a positive solution $u$ of (\ref{genschro}) with the  property that $u\in L^q_{\text{loc}}(\Omega)$.
\end{proof}


\section{The proof of Theorem \ref{sobcor}}\label{sobthmsec}

For a measure $\mu$ and $0<\alpha<n$, define the Riesz potential of order $\alpha$, $\mathbf{I}_{\alpha}(\mu)(x) = \int_{\mathbf{R}^n} \frac{d\mu(y)}{|x-y|^{n-\alpha}}.
$
Denote by $(-\Delta)^{-1}$ the Green's operator in $\mathbf{R}^n$, given by
\begin{equation}
(-\Delta)^{-1}(\mu)(x) = \begin{cases}
\;\displaystyle \frac{1}{2\pi}\int_{\mathbf{R}^n} \log|x-y| d\mu(y), \; \text{ if }n=2,\\
\;c_n\mathbf{I}_2(\mu)(x),\; \text{ if }n\geq 3.
\end{cases}
\end{equation}
where $c_n>0$ has been chosen so that $-c_n\Delta |\cdot-y|^{2-n}  = \delta_{y} \text{ in }\mathcal{D}'(\mathbf{R}^n).$
Here $\delta_{y}$ is the Dirac delta measure concentrated at the point $y$.

\begin{proof}[Proof of Theorem \ref{sobcor}]
We shall first prove part (i).  The sufficiency of the representation of $\sigma$ as $\sigma = \text{div}(\vec{\Gamma})$, with $\vec\Gamma$ satisfying (\ref{positiveineq}) follows from H\"{o}lder's inequality.  On the other hand, suppose $\sigma$ satisfies (\ref{pformbd}) with a constant $C_0>0$.  Then note that $\tilde\sigma = \frac{(p-1)^{2-p}}{2C_0} \sigma$
satisfies the hypothesis of Theorem \ref{introthmequ}.

Applying part (i) of Theorem \ref{introthmequ}, we see that there exists $v\in L^{1,p}_{\text{loc}}(\Omega)$ satisfying
$-\text{div}(|\nabla v|^{p-2}\nabla v) = |\nabla v|^p + \tilde \sigma$ in $\mathbf{R}^n$,
such that
\begin{equation}\label{nablavprop}\int_{\mathbf{R}^n}|\nabla v|^p h^p dx\leq C\int_{\mathbf{R}^n}|\nabla h|^p, \text{ for all }h\in C^{\infty}_0(\mathbf{R}^n).
\end{equation}
Now denote $d\mu  = |\nabla v|^p dx $.  Then $\mu$ satisfies
$$\int_{\mathbf{R}^n}|h|^p d\mu \leq C\int_{\mathbf{R}^n}|\nabla h|^p dx, \text{ for all }h\in C^{\infty}_0(\mathbf{R}^n).
$$
It now follows from \cite{MV95} (see also Theorem 1.7 of \cite{V}) that there exists a constant $C>0$ such that
\begin{equation}\label{mucap}\int_E (\mathbf{I}_1(\mu))^{p'} dx \leq C\text{cap}_p(E), \text{ for all compact sets }E\subset \mathbf{R}^n.
\end{equation}
We claim that there exists a solution $w$ of
\begin{equation}\label{wexist}-\Delta w =  \Bigl(\frac{2C_0}{(p-1)^{2-p}}\Bigl)\mu= \Bigl(\frac{2C_0}{(p-1)^{2-p}}\Bigl)|\nabla v|^p\text{ in }\mathbf{R}^n,\end{equation}
along with a constant $C= C(C_0)$ such that
\begin{equation}\label{wgradest}\int_E |\nabla w|^{p'} dx \leq C\text{cap}_p(E), \text{ for all compact sets }E\subset \mathbf{R}^n.
\end{equation}
To see this, let $\mu_N = |\nabla v|^p\chi_{B(0,2^N)} dx$.  Then (\ref{mucap}) is satisfied with $\mu$ replaced by $\mu_N$.  Let
$$w_N  = \frac{2C_0}{(p-1)^{2-p}} \Delta^{-1}\mu_N - c_N,
$$
where $c_N$ is chosen to ensure that $|\int_{B(0,1)}w_N dx| = 1.$

Using the inequality $|\nabla \Delta^{-1}\mu_N|\leq c\mathbf{I}_1(\mu_N)$, we see that
\begin{equation}\label{wNcap}\int_E |\nabla w_N|^{p'} dx \leq C(C_0) \text{cap}_p(E), \text{ for all compact sets }E\subset \mathbf{R}^n.
\end{equation}
Therefore the sequence $(w_N)_N$ is uniformly bounded in $L^{1,p'}_{\text{loc}}(\mathbf{R}^n)$.  By weak compactness and a diagonal argument, there is a subsequence of $w_N$ (still denoted by $w_N$), so that $w_N$ converges weakly to $w$ in $L^{1,p'}_{\text{loc}}(\mathbf{R}^n)$.  Using Rellich's theorem, and the normalization on $w_N$, we see that $w$ is not infinite.  This limit function $w$ is easily seen to be a distributional solution of (\ref{wexist}) satisfying (\ref{wgradest}).

Notice that the inequality (\ref{wgradest}) is equivalent to (see  \cite{Maz85}, Sec. 2.3.4, p. 160)
\begin{equation}\label{nablawprop}\int_{\mathbf{R}^n} |\nabla w|^{p'}|h|^p\leq C(C_0)\int_{\mathbf{R}^n} |\nabla h|^p dx, \, \, \text{for all} \, \, h \in C^\infty_0(\mathbf{R}^n).
\end{equation}
Let
$\vec\Gamma = - \bigl(\frac{2C_0}{(p-1)^{2-p}}\bigl) |\nabla v|^{p-2}\nabla v + \nabla w.$
From displays (\ref{nablavprop}) and (\ref{nablawprop}), we see that $\vec\Gamma$ satisfies the conclusion of the theorem.

Let us now turn to part (ii), which is more straightforward.  We suppose $p\geq n$.  As in the proof of part (i), we can reduce matters to when $C_0<(p-1)^{2-p}$ in (\ref{pformbdentire}).  Applying Theorem \ref{introthmequ}, we deduce the exists of $v\in L^{1,p}_{\text{loc}}(\mathbf{R}^n)$, such that
\begin{equation}\label{riclou}-\text{div}(|\nabla v|^{p-2}\nabla v) = |\nabla v|^p + \sigma \text{ in }\mathbf{R}^n,
\end{equation}
satisfying (\ref{nablavprop}).  It is immediate from (\ref{nablavprop}) and from the definition of capacity (\ref{cap}) that
$$\int_{E} |\nabla v|^p dx \leq C \text{cap}_p(E), \text{ for all compact sets }E\subset \mathbf{R}^n.
$$
However, with $p\geq n$, it is well known (see \cite{Maz85}, Sec. 2.2.4, p. 148) that $\text{cap}_p(E) = 0$ for all compact sets $E\subset\mathbf{R}^n.$
Therefore $|\nabla v|\equiv 0$, and hence $\sigma \equiv 0$.
\end{proof}

\end{document}